\providecommand{\propositionname}{Proposition}
\providecommand{\theoremname}{Theorem}
\providecommand{\lemmaname}{Lemma}
\providecommand{\remarkname}{Remark}
\numberwithin{equation}{section}
\numberwithin{figure}{section}
\theoremstyle{plain}
\newtheorem{thm}{\protect\theoremname}[section]
\theoremstyle{plain}
\newtheorem*{thm*}{\protect\theoremname}
\theoremstyle{plain}
\newtheorem{prop}[thm]{\protect\propositionname}
\theoremstyle{plain}
\newtheorem{lem}[thm]{\protect\lemmaname}
\theoremstyle{remark}
\newtheorem*{rem*}{\protect\remarkname}
\begin{document}

\title[A brake to reverse a gene drive invasion]{Catch me if you can: a spatial model for a brake-driven gene drive reversal}

\author{L\'{e}o Girardin}
\address[L. G.]{Laboratoire de Math\'{e}matiques d'Orsay, Universit\'{e} Paris
Sud, CNRS, Universit\'{e} Paris-Saclay, 91405 Orsay Cedex, France}
\email{leo.girardin@math.u-psud.fr}

\author{Vincent Calvez}
\address[V. C.]{Institut Camille Jordan, UMR 5208 CNRS \& Universit\'{e} Claude Bernard Lyon 1,  France}
\email{vincent.calvez@math.cnrs.fr}

\author{Florence D\'{e}barre}
\address[F. D.]{CNRS, Sorbonne Universit\'{e}, Universit\'{e} Paris Est Cr\'{e}teil, Universit\'{e} Paris Diderot, INRA, IRD, Institute of Ecology and Environmental sciences - Paris, IEES-Paris, 75005 Paris, France}
\email{florence.debarre@normalesup.org}

\begin{abstract}
\textcolor{black}{Population management using artificial gene drives (alleles biasing inheritance, increasing their own transmission to offspring) is becoming a realistic possibility with the development of CRISPR-Cas genetic engineering. A gene drive may however have to be stopped. ``Antidotes'' (brakes) have been suggested, but have been so far only studied in well-mixed populations. Here, we consider a reaction--diffusion system modeling the release of a gene drive (of fitness $1-a$) and a brake (fitness $1-b$, $b\leq a$) in a wild-type population (fitness $1$). We prove that, whenever the drive fitness is at most $1/2$ while the brake fitness is close to $1$, coextinction of the brake and the drive occurs in the long run. On the contrary, if the drive fitness is greater than $1/2$, then coextinction is impossible: the drive and the brake keep spreading spatially, leaving in the invasion wake a complicated spatio-temporally heterogeneous genetic pattern. Based on numerical experiments, we argue in favor of a global coextinction conjecture provided the drive fitness is at most $1/2$, irrespective of the brake fitness. The proof relies upon the study of a related predator--prey system with strong Allee effect on the prey. Our results indicate that some drives may be unstoppable, and that, if gene drives are ever deployed in nature, threshold drives, that only spread if introduced in high enough frequencies, should be preferred. }
\end{abstract}

\keywords{long-time behavior, gene drive, brake, predator-prey, strong Allee effect}
\subjclass[2000]{35K57, 37N25, 92D10, 92D25.}

\maketitle

\section{Introduction}
With the development of CRISPR-Cas9 genetic engineering, population management using gene drives has become a realistic possibility. The technique consists in artificially biasing the inheritance of a trait of interest in a target population \cite{Esvelt_2014}. 
Such biased inheritance is due to the presence of an artificial self-replicating element expressing a DNA-cutting enzyme (the Cas9 endonuclease), such that initially heterozygous individuals (\textit{i.e.} carrying the drive construct on one chromosome and a wild-type sequence on the homologous chromosome) produce almost exclusively drive-carrying gametes instead of $50\%$ of drive-carrying gametes, as expected under Mendelian segregation (see \figref{conversion}). The proportion of drive-carrying gametes depends on the conversion efficiency, perfect conversion meaning that an initially heterozygous individual produces $100\%$ of drive-carrying gametes. Thanks to its supra-Mendelian transmission to offspring, a drive can spread in a population even if it confers a significant fitness cost \cite{Deredec_2008,Tanaka_Stone_N,Unckless_2015}. Potential applications for human health and agriculture include the modification of mosquito populations to make them resistant to malaria or the eradication of agricultural pest species \cite{NASEM_2016}. 

\begin{figure}[h!]
    \centering
    \includegraphics[width = 0.5\linewidth]{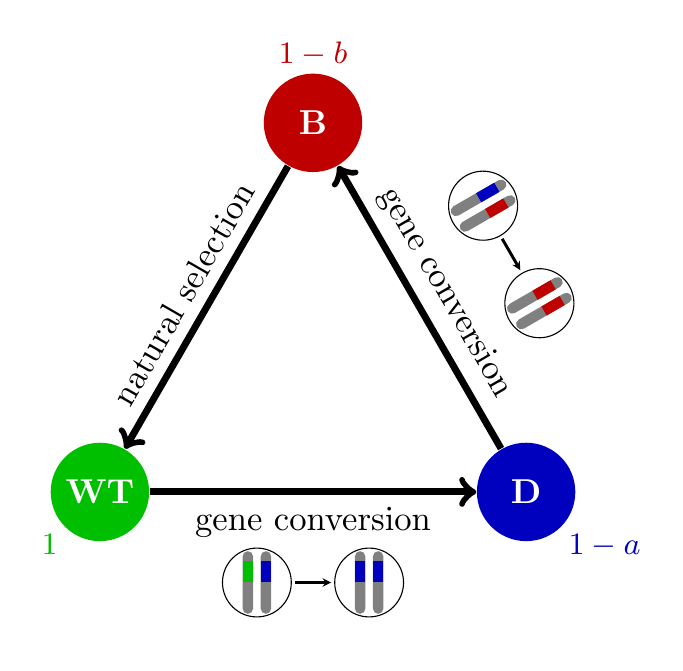}
    \caption{\textcolor{black}{The presence of a brake induces rock-paper-scissors dynamics. The drive wins over the wild-type allele due to gene conversion (such that individuals born as heterozygotes only produce drive-carrying gametes). The brake wins over the drive due to gene conversion and better fitness. The wild-type allele wins over the brake because of greater fitness. }}
    \label{fig:conversion}
\end{figure}

However, while very promising, the technique is not risk-free. A drive could have off-target effects, or spread in a non-target population. For instance, a drive can be introduced on an island to eradicate a local rat population, but the dispersal of drive-carrying individuals to the mainland or to another island would also threaten those populations \cite{Esvelt_2017, Rode_2019}. The effects of population modification using a drive may also have unexpected consequences on other species, \textit{e.g.}, predators or competitors. More generally, it is important to be able to control the spread of a drive, and to stop it if necessary. To this end, a ``brake'' construct was proposed, that does not contain the \textit{cas9} gene (and is hence unable to convert a wild-type allele), but that is able to target the very \textit{cas9} sequence contained in a drive construct, and therefore to convert a drive allele into a brake allele in a (drive/brake) heterozygote \cite{Wu_2016}. The construct has been shown to not only stop a drive, but also in some cases lead to the recovery of the original wild-type population \cite{Vella_2017}. 

Most models of gene drive consider well-mixed populations (except meta-population and partial differential equations \textcolor{black}{(PDE)} models, \cite{Beaghton_2016,Noble_2018,Tanaka_Stone_N}). Here we consider the influence of space and limited dispersal, and ask whether a brake construct is able to stop the spatial spread of a drive. More precisely, we introduce a minimalist PDE model of two interacting sub-populations combining spatial diffusion and Mendelian population genetics. The two sub-populations \textcolor{black}{correspond to the subset of the gene pool with the drive allele, and to the subset with the brake allele (the rest consisting of the wild-type allele).}
The case of a single sub-population of gene drive was addressed in \cite{Tanaka_Stone_N}. It was shown that the spatial invasion of the gene drive allele was successful up to a 
\textcolor{black}{fitness cost $a \simeq 0.7$ associated with the drive allele}
(see below for details). In the present work, we ask whether the brake, even if lately introduced (with a spatial delay with respect to the gene drive invasion), can catch up the invasion and \textcolor{black}{stop the spatial propagation of the gene drive.}
Using phase plane analysis and techniques from the \textcolor{black}{theory of} reaction--diffusion \textcolor{black}{equations}, 
\textcolor{black}{we show that the drive can be stopped} 
under some conditions on the model parameters. 
\textcolor{black}{More precisely, we} prove that
the gene drive frequency will be eventually reduced to zero everywhere, provided that the fitness cost of the drive \textcolor{black}{$a$} is above \textcolor{black}{0.5 (\textit{i.e.}, the fitness associated with the drive allele is $1-a<0.5$)} and that the fitness cost of the brake is small enough.

\subsection{The model}
We adopt the following set of notations:
\begin{itemize}
    \item $N\in\mathbb{N}$ is the spatial dimension (typically $N\in\{1,2,3\}$);
    \item $n\in\left[0,+\infty\right)$ is the total population density;
    \item $u\in\left[ 0,1 \right]$ and $v\in\left[ 0,1 \right]$ ($0\leq u+v\leq 1$) are the respective frequencies in the population
    $n$ of the gene drive allele $D$ and the brake allele $B$ (so that the frequency of the wild-type allele 
    $O$ is exactly $1-u-v$);
    \item $a\in\left( 0,1 \right)$ and $b\in\left( 0,1 \right)$ are the respective selective disadvantage (\textit{i.e.}, decreased survival) of the homozygous individuals $DD$ and $BB$ compared to the wild-type, with the assumption $a\geq b$, which
    is biologically relevant\footnote{Depending on the construct, the brake could just convert a drive without affecting its effect on fitness ($b$ close to $a$, $b \leq a$), or at the other extreme the brake could carry a cargo gene restoring wild-type fitness ($b$ close to $0$).};
    \item $h\in\left[0,1\right]$ is the dominance of the brake allele $B$ on the wild-type allele $O$ (in particular, 
    $B$ is dominant if $h=1$, recessive if $h=0$ and additive if $h=1/2$).
\end{itemize}

We assume that:
\begin{enumerate}[label=(A\theenumi)]
    \item the conversion efficiency of the drive (conversion $OD\to DD$) and 
of the brake (conversion $DB\to BB$) are perfect\footnote{This is merely for algebraic convenience and the general case will be discussed below in \subsecref{discussion_efficiency}.};
    \item gene conversion takes place early in development (\textit{e.g.}, an individual born as $OD$ becomes $DD$ and has the fitness of a $DD$ individual)\footnote{With late gene conversion (typically in the \textcolor{black}{germline}), an $OD$-born individual would have the fitness of an $OD$. In both cases though, only $D$ gametes are produced by this individual.};
    \item \textcolor{black}{individuals mate randomly (random union of gametes,} according to a uniform law).
\end{enumerate}
Then, following the biological literature \cite{Unckless_2015,Vella_2017}, the spatially homogeneous, next-generation discrete system reads:
\begin{equation*}
    \begin{cases}
\displaystyle        1-u_{g+1}-v_{g+1}= \frac{\left(1-u_g-v_g\right)\left(1-u_g-v_g+\left(1-hb\right)v_g\right)}{1-\left( au_g^2+bv_g^2+2bu_g v_g \right)-2\left( 1-u_g-v_g \right)\left( au_g+hbv_g \right)} ,\medskip\\
\displaystyle        u_{g+1} =  \frac{u_g\left(\left( 1-a \right)u_g+2\left( 1-a \right)\left( 1-u_g-v_g \right)\right)}{1-\left( au_g^2+bv_g^2+2bu_g v_g \right)-2\left( 1-u_g-v_g \right)\left( au_g+hbv_g \right)} ,\medskip\\
\displaystyle        v_{g+1} = \frac{v_g\left( \left( 1-b \right)v_g+2\left( 1-b \right)u_g+\left( 1-hb \right)\left( 1-u_g-v_g \right)\right)}{1-\left( au_g^2+bv_g^2+2bu_g v_g \right)-2\left( 1-u_g-v_g \right)\left( au_g+hbv_g \right)} .
    \end{cases}
\end{equation*}
The first line corresponds to the frequency dynamics of the wild-type allele, the second line to the drive, and the last line to the brake. For each equation, the numerator on the right-hand side corresponds to the amount of corresponding alleles produced, while the denominator corresponds to the total amount of alleles (\textit{i.e.}, the ``mean fitness'' in the population). It can be verified that this denominator is exactly such that the first equation is true (in other words, such that the sum of the three frequencies remains identically equal to $1$ as time goes on). Since the first equation is redundant, we get rid of it hereafter. 

These equations can be understood as follows.
\begin{itemize}
    \item Wild-type alleles (first equation) are carried \begin{enumerate*}[label=\textit{(\roman*)}]\item by all of the gametes produced by $OO$ homozygotes (initially in frequency $(1-u_g-v_g)^2$, and with fitness $1$), and \item by half of the gametes of $OB$ heterozygotes (in frequency $2 v_g  (1-u_g-v_g)$, and with fitness $(1-h b)$). Note that since gene conversion is assumed to be perfect, no wild-type alleles are produced by initially $OD$ individuals\end{enumerate*}. 
    \item Drive alleles (second equation) are carried \begin{enumerate*}[label=\textit{(\roman*)}]\item by all of the gametes produced by $DD$ homozygotes (initially in frequency $u_g^2$ and who have a fitness $(1-a)$), but also \item by all of the gametes produced by initially $OD$ heterozygotes (initially in frequency $2 u_g (1-u_g-v_g) $), who were immediately converted into $DD$ homozygotes, and hence have fitness $(1-a)$. Since gene conversion is assumed to be perfect, no drive alleles are produced by initially $DB$ individuals\end{enumerate*}. 
    \item Brake alleles (third line) are carried \begin{enumerate*}[label=\textit{(\roman*)}]\item by all of the $BB$ homozygotes (initially in frequency $v_g^2$, and who have a fitness $(1-b)$), \item by all of the gametes produced by initially $DB$ heterozygotes (in frequency $2 u_g v_g$), who were immediately converted into $BB$ homozygotes, and hence have fitness $(1-b)$, and finally \item by half of the gametes produced by $OB$ heterozygotes (in frequency $2 v_g  (1-u_g-v_g)$, and with fitness $(1-h b)$)\end{enumerate*}. 
    \item \textcolor{black}{Since only ratios of fitness appear in the equations, the assumption
	that the wild-type fitness is unitary is done without loss of generality.}
\end{itemize}

Next, we take the spatial diffusion of the individuals of the population $n$ into account and assume:
\begin{enumerate}[label=(A\theenumi)]
    \setcounter{enumi}{3}
    \item the time scale of the diffusion mechanism and the maturation time between two generations are of the same order;
    \item each sub-population diffuses at the same rate.
\end{enumerate}
We are now in position to perform a classical first-order approximation \cite{Tanaka_Stone_N} and obtain the following
reaction--diffusion PDE system:
\begin{equation*}
    \begin{cases}
\displaystyle        \frac{\partial u}{\partial t} - \Delta u -2\nabla(\log  n)\cdot\nabla u = u\left( \frac{\left( 1-a \right)u+2\left( 1-a \right)\left( 1-u-v \right)}{1-\left( au^2+bv^2+2buv \right)-2\left( 1-u-v \right)\left( au+hbv \right)}-1 \right),\medskip\\
\displaystyle        \frac{\partial v}{\partial t} - \Delta v -2\nabla(\log  n)\cdot\nabla v = v\left( \frac{\left( 1-b \right)v+2\left( 1-b \right)u+\left( 1-hb \right)\left( 1-u-v \right)}{1-\left( au^2+bv^2+2buv \right)-2\left( 1-u-v \right)\left( au+hbv \right)}-1 \right),
    \end{cases}
\end{equation*}
\textcolor{black}{where in general $u$, $v$ and $n$ are functions of time $t$ and space $x$. Note that in this system there is no equation on the
total population density $n$, which is therefore entirely unknown at this point.}

The transport term $2\nabla(\log  n)$ is the signature of gene flow in populations which are not homogeneous in size. This is a consequence of the reformulation of the problem by means of frequencies rather than population densities. To make this connection clear, we point out that the diffusion operator $\frac{\partial}{\partial t}-\Delta$ for the population density $un$ is related to the diffusion--transport operator $\frac{\partial}{\partial t}-\Delta - 2\nabla(\log  n)$ for the frequency $u$:
\begin{align*}
\frac{\partial \left(un\right)}{\partial t}-\Delta \left(un\right) & =\frac{\partial u}{\partial t}n+\frac{\partial n}{\partial t}u-\left(\Delta u\right)n-\left(\Delta n\right)u-2\nabla u\cdot \nabla n\\
& = n\left[\frac{\partial u}{\partial t}-\Delta u -2\nabla(\log  n)\cdot\nabla u\right]\textcolor{black}{+}u\left[\frac{\partial n}{\partial t} -\Delta n\right].
\end{align*}

We shall address two types of questions in our study.  \textcolor{black}{\begin{enumerate*}[label=\textit{(\roman*)}]
    \item How do the allelic frequencies evolve when all three genotypes asymptotically coexist? 
    \item Under which assumptions is the wild-type restored spatially uniformly? 
\end{enumerate*}}

\textcolor{black}{For (i), we will mainly be interested in the propagation speeds of $u$ and $v$. Since these speeds are
strongly impacted by the \textit{a priori} unknown transport term $2\nabla\left(\log n\right)$, we will
simplify this part of the discussion by assuming that $n$ is spatially homogeneous, so that the transport term
vanishes. We are aware this is a very serious restriction but cannot properly conclude without it. 
On the contrary, for (ii), where we are interested in spatially uniform convergence, the transport
term does not matter that much and we can easily handle it, under a mild assumption: 
\begin{enumerate}[label=(A\theenumi)]
    \setcounter{enumi}{5}
    \item $\nabla\left(\log n\right)$ is well-defined and uniformly H\"{o}lder-continuous with an exponent
larger than $\frac12$ (for instance, uniformly Lipschitz-continuous). 
\end{enumerate}
This mathematical assumption should not be understood as being too restrictive: it is satisfied if, for 
instance, $n$ solves a generic
population dynamics reaction-diffusion equation with initial condition bounded above and below by positive constants.
Of course it is satisfied if $n$ is spatially homogeneous, so that we will in any case assume that the above
assumption (A6) is satisfied.} 

For ease of reading, we define the scalar parabolic operator
\[
\mathscr{P}=\frac{\partial}{\partial t} - \Delta - 2\nabla(\log  n)\cdot\nabla
\]
as well as the reaction functions
\[
    w\left( u,v \right)=1-\left( au^2+bv^2+2buv \right)-2\left( 1-u-v \right)\left( au+hbv \right),
\]
\[
    g\left( u,v \right)=
    \textcolor{black}{\begin{pmatrix}
    g_1\left(u,v\right)\\
    g_2\left(u,v\right)
    \end{pmatrix}=}
    \begin{pmatrix}
	\left( 1-a \right)u+2\left( 1-a \right)\left( 1-u-v \right)\\
	\left( 1-b \right)v+2\left( 1-b \right)u+\left( 1-hb \right)\left( 1-u-v \right)
    \end{pmatrix},
\]
\[
    f\left( u,v \right)=
    \textcolor{black}{\begin{pmatrix}
    f_1\left(u,v\right)\\
    f_2\left(u,v\right)
    \end{pmatrix}=}
    \frac{1}{w\left( u,v \right)}g\left( u,v \right)-\begin{pmatrix}1\\1\end{pmatrix},
\]
so that the system finally reads:
\begin{equation}
    \mathscr{P} \begin{pmatrix}u\\v\end{pmatrix}= \begin{pmatrix}u\\v\end{pmatrix}\circ f\left( u,v \right).
    \label{sys:general}
\end{equation}
Here, the fact that the $2\times 2$ linear parabolic operator on the left-hand side is the same on 
both lines is of the utmost importance. 
Mathematically, it is a necessary and sufficient condition to apply several theorems of the standard
parabolic theory, in particular a generalized maximum principle due to Weinberger \cite{Weinberger_1975}
that we will indeed use extensively (it is recalled in \appref{WMP}). Biologically, it means that \textcolor{black}{all} individuals 
move in space similarly: the gene under consideration does not affect the motility of the individuals carrying it. 

Since $u$ and $v$ are frequencies, they \textcolor{black}{should} satisfy $u+v\leq1$ (with $1-\left( u+v \right)$ the 
frequency of the wild-type allele $O$). Therefore it is natural to define the triangle
\[
    \mathsf{T}=\left\{(u,v)\in\left[ 0,1 \right]^2\ |\ u+v\leq1\right\}.
\]
\textcolor{black}{We will indeed verify later on that it is an invariant subset of the phase plane.}

\subsection{Results} 
\textcolor{black}{These results and the discussions of \subsecref{discussion_fitness} are summarized in \figref{behaviors_diagram}. 
Numerical simulations are run in \textit{GNU Octave} \cite{Octave} (semi-implicit finite difference scheme with Neumann boundary conditions, see \appref{numeric}).}

\begin{figure}
    \centering
    \resizebox{.9\linewidth}{!}{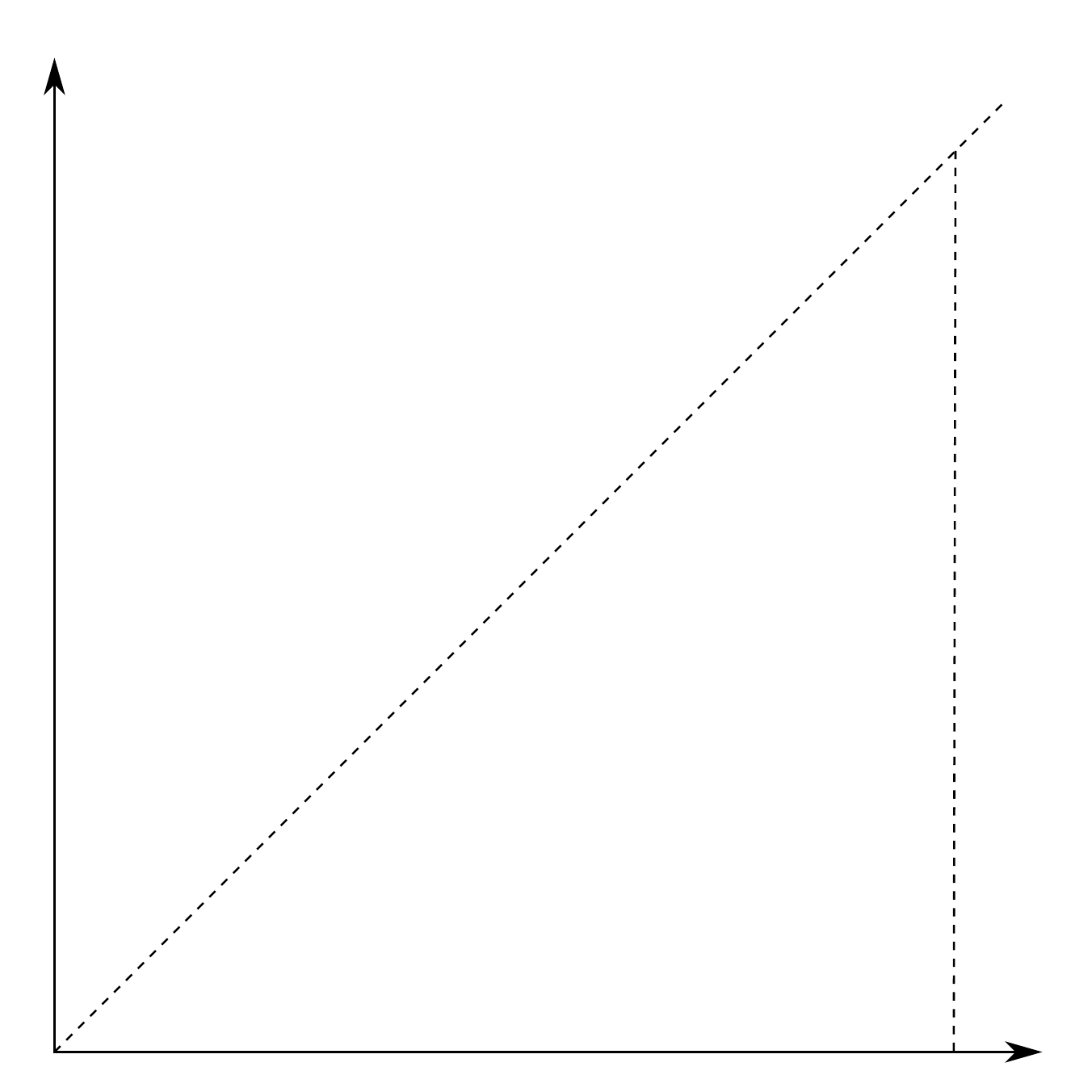}
    \caption{\textcolor{black}{Possible behaviors investigated in the forthcoming pages, depending on the values of $(a,b)$. 
    The parameter $h$ has a fixed arbitrary value in this diagram. The threshold $b^\star$ depends on $a$ and $h$ and its graph as a function of $a$
    is unknown.}}
    \label{fig:behaviors_diagram}
\end{figure}

\subsubsection{Coexistence}

Our first  result deals with the case of coexistence. For technical reasons that will be discussed below, the statement 
is restricted to spatially homogeneous solutions, \textit{i.e.} we consider the solutions of the system of ODE with 
the righ-hand-side of (\ref{sys:general}) only.

In the case where the selective disadvantage of the drive compared to the wild-type is less than a half ($a<1/2$), we 
establish that both sub-populations of drive and brake persist in the long term, as measured by their frequencies 
which are positive at arbitrary large times. The mathematical statement is as follows. 

\begin{thm}\label{thm:persistence_if_a_is_small}
    Let $(u,v)$ be a spatially homogeneous solution of (\ref{sys:general}) \textcolor{black}{with $n$ spatially homogeneous} and 
    initial condition in the interior of $\mathsf{T}$.
    
    If $a<\frac12$, then coextinction of $(u,v)$ cannot occur. More precisely,
    \[
	\limsup_{t\to+\infty}\left(u\left( t \right)+v\left( t \right)\right)>0.
    \]
\end{thm}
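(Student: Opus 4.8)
The plan is to reduce the impossibility of coextinction to a purely local statement about the reaction near the extinction equilibrium $(0,0)$. Since we are in the spatially homogeneous setting with $n$ spatially homogeneous, the transport term $2\nabla(\log n)\cdot\nabla$ vanishes and $(u,v)$ solves the ODE system $u'=u\,f_1(u,v)$, $v'=v\,f_2(u,v)$. I would first record the two bookkeeping facts that make the argument run: that $\mathsf{T}$ is invariant (so the solution is global and bounded, and $w$ stays positive so that $f$ is well-defined along the trajectory), and that positivity of the initial frequencies is preserved, i.e. $u(t)>0$ and $v(t)>0$ for all $t\geq 0$. The latter is immediate from the multiplicative structure, since $u(t)=u(0)\exp\bigl(\int_0^t f_1(u,v)\,ds\bigr)$, and likewise for $v$.

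The crucial computation is the per-capita growth rate of the drive at the origin. One checks directly that $w(0,0)=1$ and $g_1(0,0)=2(1-a)$, hence
\[
f_1(0,0)=\frac{g_1(0,0)}{w(0,0)}-1=2(1-a)-1=1-2a.
\]
This is exactly where the hypothesis $a<\tfrac12$ enters: it makes $f_1(0,0)>0$, meaning the extinction state is linearly unstable in the drive direction. By continuity of $f_1$ (legitimate since $w(0,0)=1>0$, so $w>0$ on a neighborhood of the origin), there exist $\varepsilon>0$ and a neighborhood $\mathcal{V}$ of $(0,0)$ in $\mathsf{T}$ on which $f_1\geq \varepsilon>0$.

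I would then argue by contradiction. Suppose coextinction occurs, i.e. $\limsup_{t\to+\infty}(u+v)=0$; since $u,v\geq 0$ this forces $(u(t),v(t))\to(0,0)$, so there is a time $T$ with $(u(t),v(t))\in\mathcal{V}$ for all $t\geq T$. On $[T,+\infty)$ we then have $u'(t)=u(t)\,f_1(u(t),v(t))\geq \varepsilon\,u(t)>0$, so $u$ is strictly increasing and $u(t)\geq u(T)>0$. This contradicts $u(t)\to 0$, which proves the claim.

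I expect the entire difficulty to be concentrated in the preliminary bookkeeping rather than in the core mechanism: one must ensure the solution is defined for all times and remains in $\mathsf{T}$ with $w>0$, so that the reaction is smooth and the sign analysis is valid — all of which follows from the invariance of $\mathsf{T}$ established elsewhere. The mechanism itself is robust and transparent: it only uses the one-dimensional instability of the origin along the $u$-axis, and in fact it proves the \emph{stronger} statement $\limsup_{t\to+\infty}u(t)>0$, reflecting that a drive with $a<\tfrac12$ cannot be driven to extinction at all.
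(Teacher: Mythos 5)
Your proof is correct, and it is leaner than the one in the paper. The paper obtains \thmref{persistence_if_a_is_small} as a by-product of a full classification of the boundary equilibria of the diffusionless flow ($(0,0)$ a saddle when $a<\frac12$, $(1,0)$ and $(0,1)$ saddles, $(\theta,0)$ an unstable node), and then invokes the stable and unstable manifold theorems to rule out convergence of an interior trajectory to any of these points. You isolate the single quantitative fact driving that classification, namely $f_1(0,0)=1-2a>0$, and convert it directly into a contradiction: once the trajectory is trapped in a neighborhood of the origin where $f_1\geq\varepsilon$, the inequality $u'\geq\varepsilon u$ keeps $u$ bounded below by $u(T)>0$. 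This buys two things. First, it avoids invariant-manifold theory altogether, which is a genuine gain here because $f_2(0,0)=-hb$ vanishes when $h=0$, so $(0,0)$ is then not hyperbolic and the "saddle" classification requires extra care; your argument is insensitive to the $v$-eigenvalue. Second, it makes transparent that the only role of the hypothesis $a<\frac12$ is the linear instability of extinction in the drive direction. What the paper's route buys in exchange is a global picture of the phase portrait (signs of the flow on all of $\partial\mathsf{T}$, nature of the other equilibria), which is reused in the subsequent discussion of interior stationary states and limit cycles; your local argument provides none of that.

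One small correction: your closing claim that the argument "in fact proves the stronger statement $\limsup_{t\to+\infty}u(t)>0$" does not follow from what you wrote. Your contradiction hypothesis is that $u+v\to0$, hence that \emph{both} components enter a neighborhood of the origin. If one assumes only $u\to0$, the trajectory need not approach $(0,0)$: $v$ could remain away from $0$ (for instance near $1$, where $f_1(0,v)$ is negative), and the differential inequality $u'\geq\varepsilon u$ is then unavailable. The stronger conclusion may well hold, but it requires an additional argument controlling $v$.
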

In view of numerical experiments, we expect damped oscillations converging to a coexistence
state or sustained oscillations approaching a periodic limit cycle or a heteroclinic cycle 
(see \subsecref{linear_stability} below). 

Concerning the spatially heterogeneous problem, 
describing the invasion of the drive and the brake in a territory occupied by the wild-type allele, 
we explain in \subsecref{discussion_fitness} why we expect that the brake always catches up with the 
drive and that, afterwards, both populations persist in the wake of the joint invasion front. 
We give evidence that this claim is  true if $a\leq1/4$ and the total population $n$ is spatially 
homogeneous, based on Ducrot--Giletti--Matano \cite{Ducrot_Giletti_Matano_2}. We believe it remains 
true even if $1/4<a<1/2$. \figref{behavioursA} is a numerical illustration of this claim. 
Indeed, we observe that the brake catches up with the drive, even if it starts with a space and time delay.
However, the drive is ``strong'' enough to persist, resulting in a joint invasion front followed 
by a complicated spatio-temporal pattern.

\subsubsection{Coextinction}

Our second result deals with the case of coextinction. Here we are able to handle spatially 
heterogeneous solutions and the transport term $2\nabla\left(\log n\right)$.  

We assume that the initial frequencies $\left(u_0,v_0\right)$ are distributed such 
that some individuals carrying the brake allele have been released somewhere, whereas 
the gene drive allele has not completed its invasion in the whole space yet (only individuals 
carrying the wild-type allele  are present far away in space). 
\textcolor{black}{Note that we do not need to assume that the brake is released in a particular region of space -- say, the region of space already colonized by the gene drive.}
If the \textcolor{black}{selective disadvantage of the drive is sufficient}
($a>1/2$) \textcolor{black}{while that } of the brake is not too large ($b<b^{\star}$), then the 
drive goes extinct everywhere, followed by the complete extinction of the brake as well. The 
threshold for the brake $b^{\star}(a,h)$ appears for technical reasons at several steps of our argument. 
Consequently, an explicit value is not straightforward to obtain, and might not be informative. 
\textcolor{black}{As explained below,} we believe that this is only a technical restriction, and that the result should remain 
true \textcolor{black}{in the whole range $b\in\left(0,a\right)$}. The mathematical statement is as follows.

\begin{thm}\label{thm:convergence_to_0_if_b_is_small}
    Let $(u,v)$ be the solution of (\ref{sys:general}) with initial condition $\left(u_0,v_0\right)\in\mathscr{C}\left(\mathbb{R}^N,\mathsf{T}\right)$ satisfying
    \[
	v_0\neq 0\quad\text{and}\quad\lim_{\|x\|\to+\infty}\left( u_0,v_0 \right)(x)=\left( 0,0 \right).
    \]
    
    There exists $b^\star\in(0,a)$ \textcolor{black}{depending on $a$ and $h$} such that, if $a>\frac12$ and $b<b^\star$, then coextinction occurs:
    \[
	\lim_{t\to+\infty}\left(\sup_{x\in\mathbb{R}^N}u\left( t,x \right)+\sup_{x\in\mathbb{R}^N}v\left( t,x \right)\right)=0.
    \]
\end{thm}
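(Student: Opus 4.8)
The plan is to recast (\ref{sys:general}) as a predator--prey system in which the drive $u$ is the prey and the brake $v$ is the predator, then to show that the predator eventually wipes out the prey everywhere, after which the predator itself starves and vanishes. First I would verify that $\mathsf{T}$ is invariant and that $w>0$ on $\mathsf{T}$, so that the problem is globally well-posed with $(u,v)$ valued in $\mathsf{T}$: the sign of the vector field $(u,v)\circ f$ on $\partial\mathsf{T}$, together with Weinberger's maximum principle (\appref{WMP}), applicable precisely because both lines share the operator $\mathscr{P}$, gives this and also the comparison principles used below. Next I would record the prey dynamics with no predator: a direct factorization gives $f_1(u,0)=-a(u-1)\bigl(u-(2-\tfrac1a)\bigr)/w(u,0)$, so $uf_1(u,0)$ vanishes exactly at $u\in\{0,\,2-\tfrac1a,\,1\}$ with $0$ and $1$ stable. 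Hence for $a>\tfrac12$ one has $2-\tfrac1a\in(0,1)$ and the drive alone is \emph{bistable with a strong Allee threshold} $\theta=2-\tfrac1a$; this is the structural feature the argument exploits, and it is exactly where the hypothesis $a>\tfrac12$ enters.

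The core is the drive-extinction step $\sup_x u(t,\cdot)\to0$. Here I would compare $(u,v)$ with an auxiliary predator--prey system whose prey carries the same strong Allee effect and whose predator grows by converting prey (the $DB\to BB$ conversion) while losing to the wild type. The mechanism to establish is the classical one for a predator acting on an Allee-limited prey: the prey's high-density ``refuge'' $u\approx1$ ceases to be invariant once $v>0$, because the brake converts and outcompetes the drive; the brake front therefore sweeps through the drive region, and behind it the drive is left below $\theta$ and collapses through the extinction-favouring bistable dynamics. Concretely I would build a family of super-solutions for $u$, driven by the presence of $v$ (itself bounded below where it matters by a spreading sub-solution of the predator equation), forcing $u$ below $\theta$ uniformly in $x$; the transport term $-2\nabla(\log n)\cdot\nabla$ is harmless because assumption (A6) makes its coefficient uniformly H\"older, so comparison and parabolic regularity go through unchanged. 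The restriction $b<b^\star$ is exactly what makes the predator efficient enough: a fit brake ($b$ small) converts and beats the drive vigorously, guaranteeing that the brake's invasion speed into the drive exceeds the drive's invasion speed into the wild type, so the predator front overtakes the prey front everywhere rather than lagging behind it and letting a drive bump escape. The several thresholds produced by the sub/super-solution constructions are then collected into the single $b^\star(a,h)$.

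Once $\sup_x u(t,\cdot)\to0$ is known, the brake-extinction step is comparatively soft. A short computation gives $f_2(0,v)<0$ for every $v\in(0,1]$ and every $h\in[0,1]$ (this is the wild-type-beats-brake arrow of the rock--paper--scissors cycle), with $f_2(0,0)=-bh$. For $h>0$ the state $v=0$ is linearly stable, so inserting the smallness of $u$ into $f_2(u,v)$ and using continuity yields, after some time, $\mathscr{P}v\le-\delta v$ uniformly in $x$ for some $\delta>0$; comparison with the homogeneous solution of $\dot V=-\delta V$ then gives $\sup_x v(t,\cdot)\to0$. The degenerate case $h=0$, where $f_2(0,v)\sim-b\,v(1-v)$ vanishes to second order at the origin, requires instead comparison with a logistic-type barrier driven by the (vanishing) drive, which feeds the brake only through the conversion term $2(1-b)u$; as the drive disappears this barrier still decays, and $\sup_x v(t,\cdot)\to0$ persists, completing the proof.

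The main obstacle is the drive-extinction step, and specifically the need for \emph{uniform-in-space} control. The strong Allee effect cuts both ways: it is what ultimately kills a depleted drive, but it also makes $u\approx1$ locally stable in the absence of the brake, so I cannot simply dominate $u$ by a retreating scalar bistable front — I must genuinely track the brake's pursuit and rule out any drive plateau, or any far-field drive bump, surviving ahead of the advancing brake. Turning the heuristic ``the predator overtakes the Allee-limited prey'' into global super-solutions valid for all $x\in\mathbb{R}^N$, uniformly over admissible initial data decaying at infinity, is the delicate point, and it is what forces the quantitative restriction $b<b^\star$ rather than the conjecturally sufficient full range $b\in(0,a)$.
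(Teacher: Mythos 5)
Your high-level plan (predator--prey structure with the drive $u$ as Allee-limited prey, kill $u$ first, then starve $v$) coincides with the paper's, but the step that carries all the weight --- forcing $u$ below the threshold $\theta=\frac{2a-1}{a}$ \emph{uniformly in space} --- is only described as a goal, not proved. You propose a front-chasing mechanism (``the brake's invasion speed into the drive exceeds the drive's invasion speed into the wild type'') together with an unspecified ``family of super-solutions for $u$ driven by the presence of $v$'', and you yourself flag that construction as the delicate point. That is precisely the gap: a speed comparison only says the brake front reaches the drive front, not that $u$ is subsequently pushed below $\theta$ everywhere and stays there, and no candidate super-solution is exhibited. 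The paper never compares speeds for this theorem. Instead it builds a one-parameter family of convex sets $\mathsf{C}_\mu\subset\mathsf{T}$ bounded by line segments through $(\theta,0)$ (\lemref{contractant_sets_1}--\lemref{contractant_sets_4}), on whose boundaries the reaction term points inward when $b$ is small; Weinberger's vector-valued maximum principle then forces the whole spatial image $\mathsf{I}_t=\{(u,v)(t,x)\,:\,x\in\mathbb{R}^N\}$ through this nested family (the slope $\mu_t$ is nondecreasing), and a compactness argument (an extracted entire solution would have to be the unstable constant $(\theta,0)$) yields $\mathsf{I}_t\subset\{u<\theta\}$ in finite time. Relatedly, your account of where $b<b^\star$ enters does not match its actual role: for $b\le a$ one always has $2\sqrt{(1+a-2b)/(1-a)}\ge 2$, which already exceeds the drive's bistable speed, so the speed ordering imposes no restriction; in the paper $b^\star=\min(\overline{b}_1,\overline{b}_2,\overline{b}_3)$ is the largest $b$ for which the predator--prey sign structure (\propref{predator-prey}) and the inward-pointing property on $\partial\mathsf{C}_\mu$ survive the $O(b)$ perturbation of the $b=0$ vector field.

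There is a second gap in your brake-extinction step. The claim that $f_2(0,v)<0$ for every $v\in(0,1]$ fails at $v=1$: the all-brake state $(0,1)$ is a steady state, so $f_2(0,1)=0$ and the asserted bound $\mathscr{P}v\le -\delta v$ cannot hold uniformly unless one first knows $\sup_x v$ is bounded away from $1$. This is exactly why the paper's first stage also proves $v<\frac{4a^2-(1-a)}{4a^2}<1$ uniformly, via \lemref{contractant_sets_3} and the additional invariant set $\mathsf{C}_{-\frac12\left(\frac{1+a}{2a}+\frac{1}{\theta}\right)}$, before running the decay argument for $v$ (the paper's footnote explicitly records this as a necessary technical supplement). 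Your case distinction at $h=0$ addresses the degeneracy of $f_2(0,\cdot)$ at $v=0$, which is real and which the paper also treats, but not the one at $v=1$, which your argument leaves open.
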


It is important to note that the convergence to zero of both $u$ and $v$ is uniform in space, that is, 
we rule out the possibility of a persistent wave of $u$ followed or replaced by a wave in $v$. 

We performed numerical experiments to explore the possible behaviours \textcolor{black}{when $b$ varies in the range $\left(0,a\right)$}. 
We observed complete extinction for values of $b$ up to $a$ (\figref{behavioursB})
\footnote{\textcolor{black}{We actually observed that the extinction threshold is larger than $a$ 
but smaller than $1$ (\figref{behaviours}). Nevertheless, the case $b>a$ is 
beyond the scope of our assumptions and does not correspond to an relevant case in the 
context of the biological problem.}}. To complete the numerical investigation, we give some evidence in \subsecref{discussion_fitness} 
supporting the conjecture that the coextinction of $u$ and $v$ occurs for all $b\leq a$.

\begin{figure}
    \centering
        \begin{subfigure}{0.45\linewidth}
    \includegraphics[width=\linewidth]{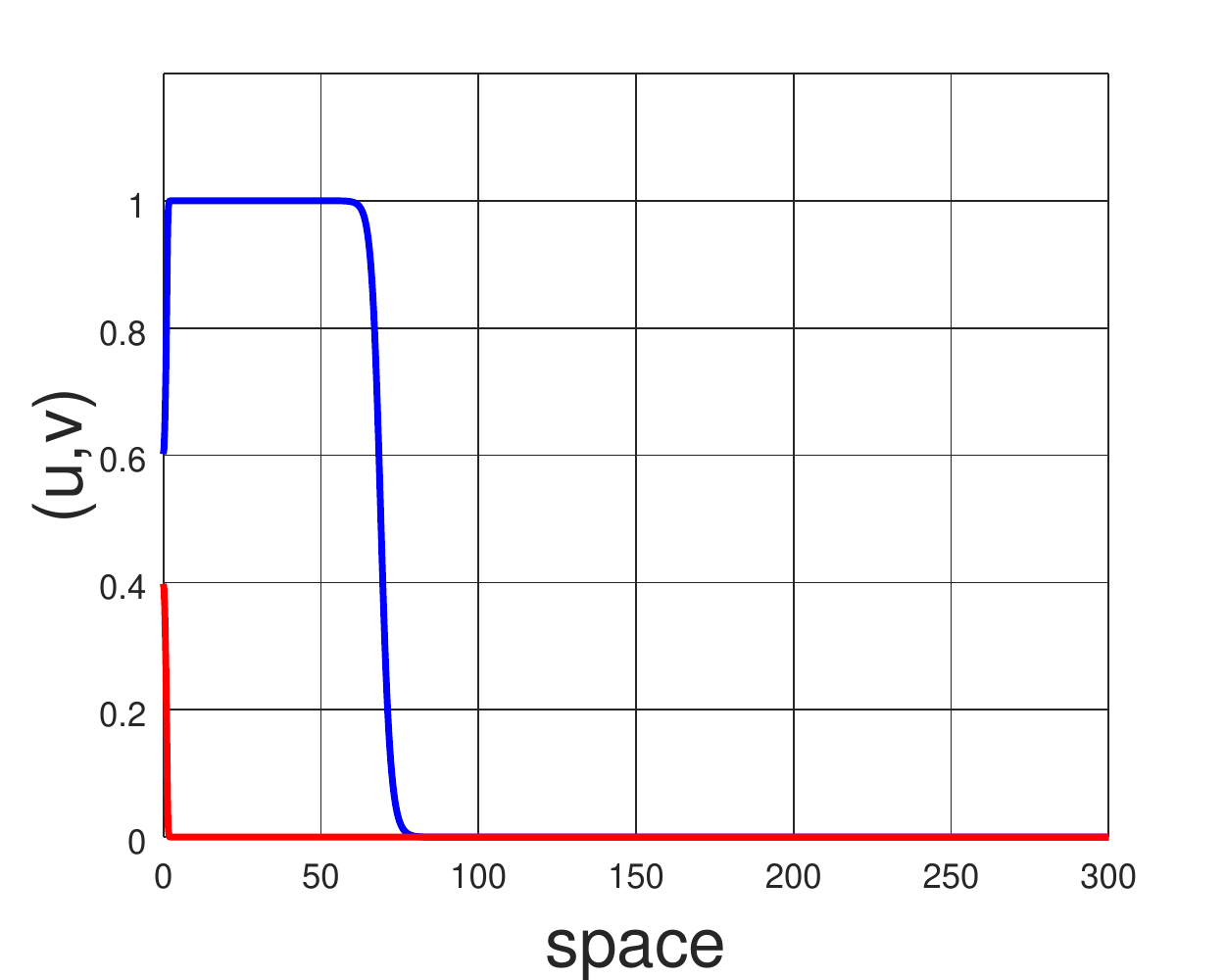}
    \caption{$t = 80$}
    \end{subfigure}
    \begin{subfigure}{0.45\linewidth}
    \includegraphics[width=\linewidth]{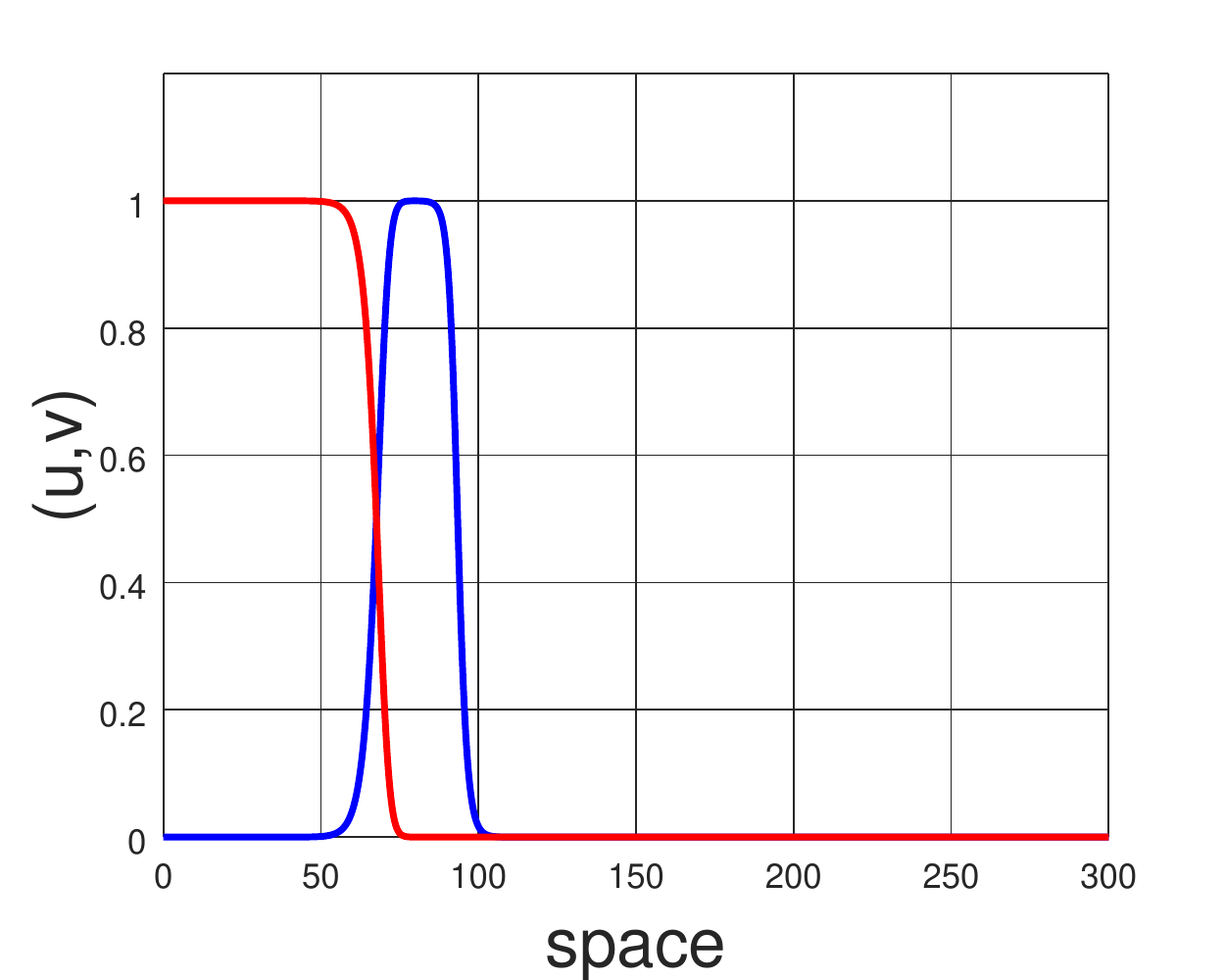}
    \caption{$t = 112$}
    \end{subfigure}
    \begin{subfigure}{0.45\linewidth}
    \includegraphics[width=\linewidth]{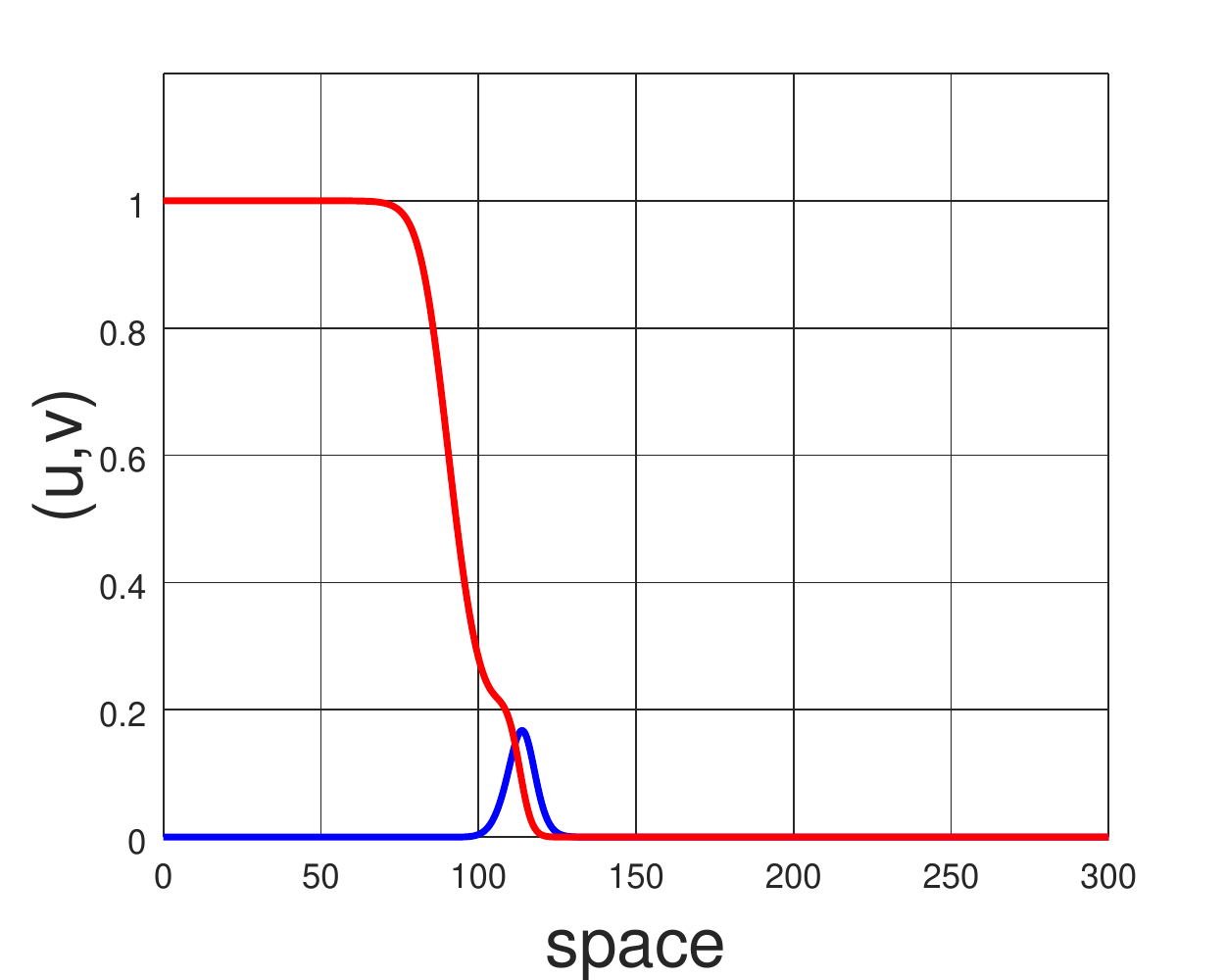}
    \caption{$t = 144$}
    \end{subfigure}
    \begin{subfigure}{0.45\linewidth}
    \includegraphics[width=\linewidth]{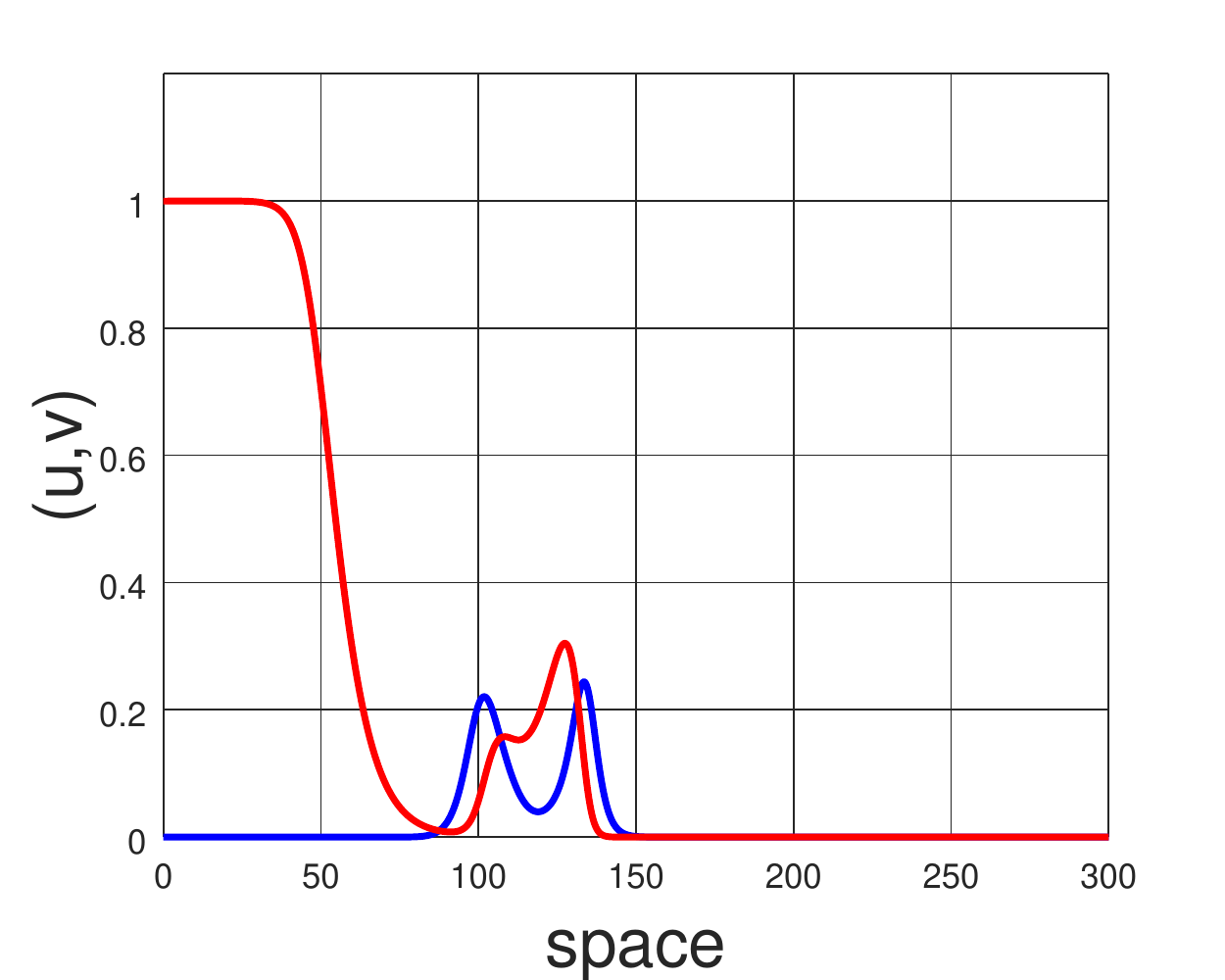}
    \caption{$t = 176$}
    \end{subfigure}
    \begin{subfigure}{0.75\linewidth}
    \includegraphics[width=\linewidth]{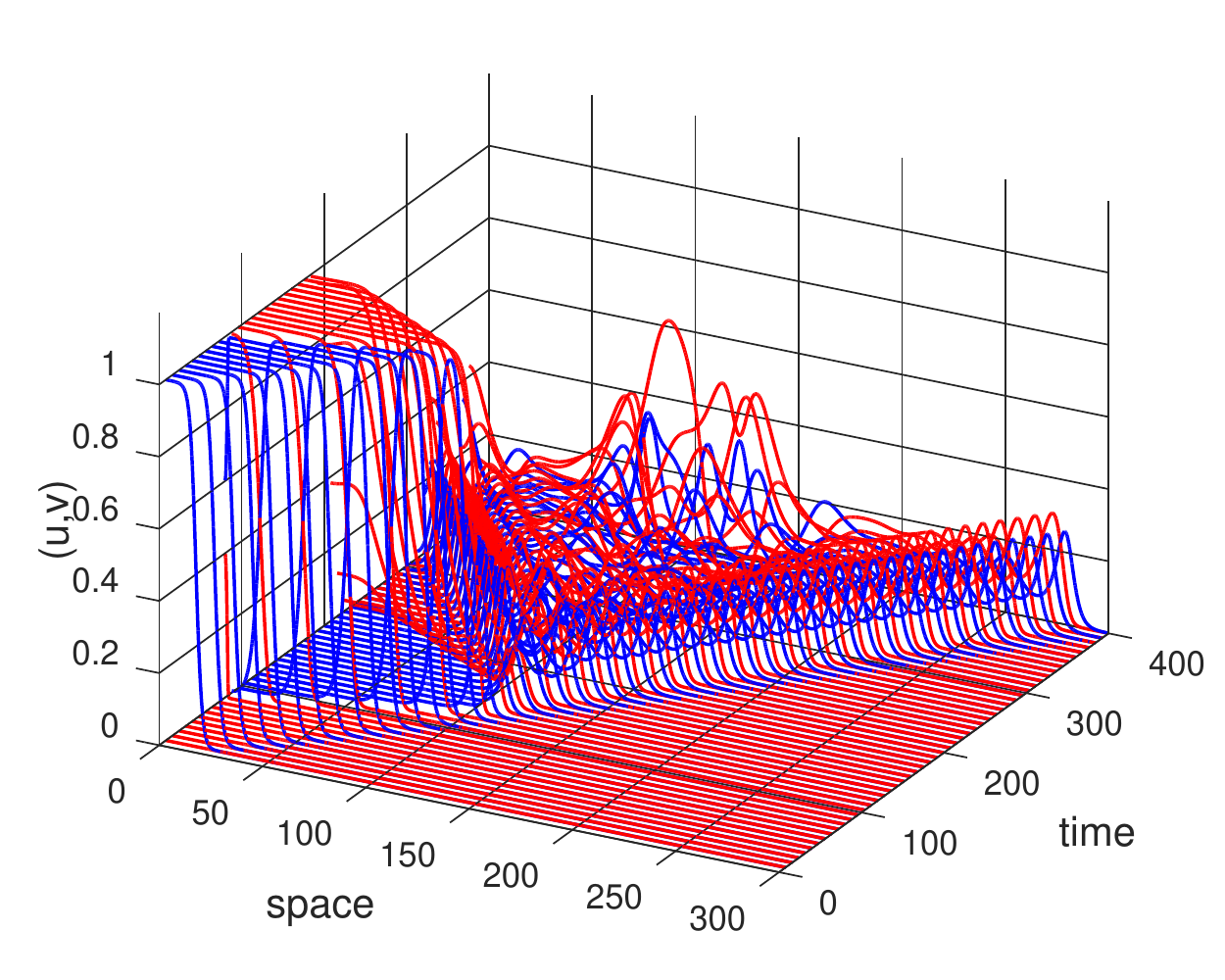}
    \caption{Space-time representation of the solution $(u,v)$}
    \end{subfigure}
    \caption{Numerical solution of (\ref{sys:general}) in the case $a= 0.45<1/2$, $b = 0.35<a$, $h = 0.5$, $n$ spatially homogeneous (\textit{i.e.} the drift term $\nabla\log n$ vanishes). (A-B-C-D) are successive snapshots of the drive $u$ (blue) and the brake $v$ (red) from the time $t=80$ at which the brake $v$ is released on the left-hand-side of the domain. (E) is the superposition of many snapshots, in order to visualize the spatio-temporal dynamics. \textcolor{black}{We observe a small, stable, composite wave followed by spatio-temporal oscillations.}}
    \label{fig:behavioursA}
\end{figure}    

\begin{figure}
    \centering
        \begin{subfigure}{0.45\linewidth}
    \includegraphics[width=\linewidth]{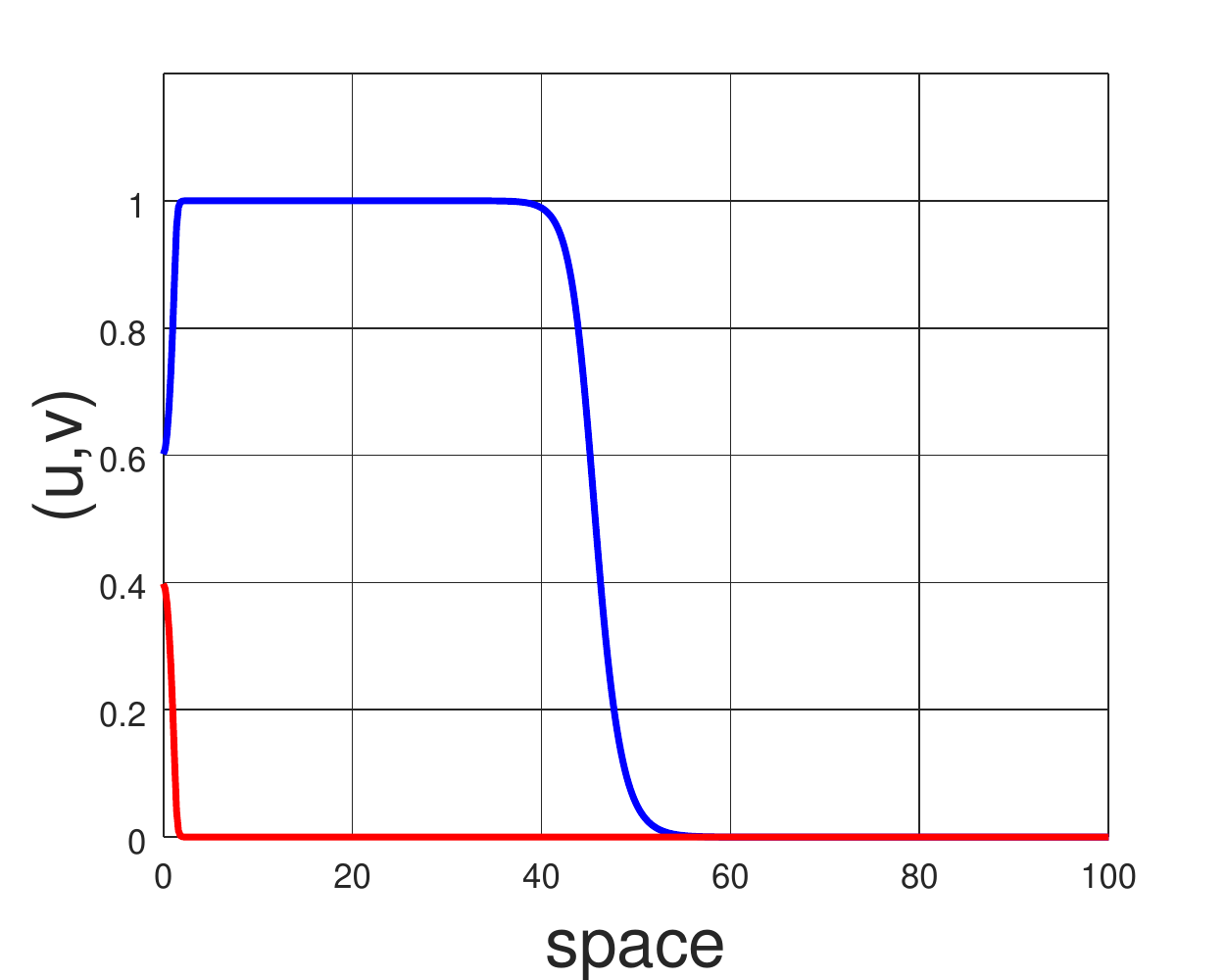}
    \caption{$t = 80$}
    \end{subfigure}
    \begin{subfigure}{0.45\linewidth}
    \includegraphics[width=\linewidth]{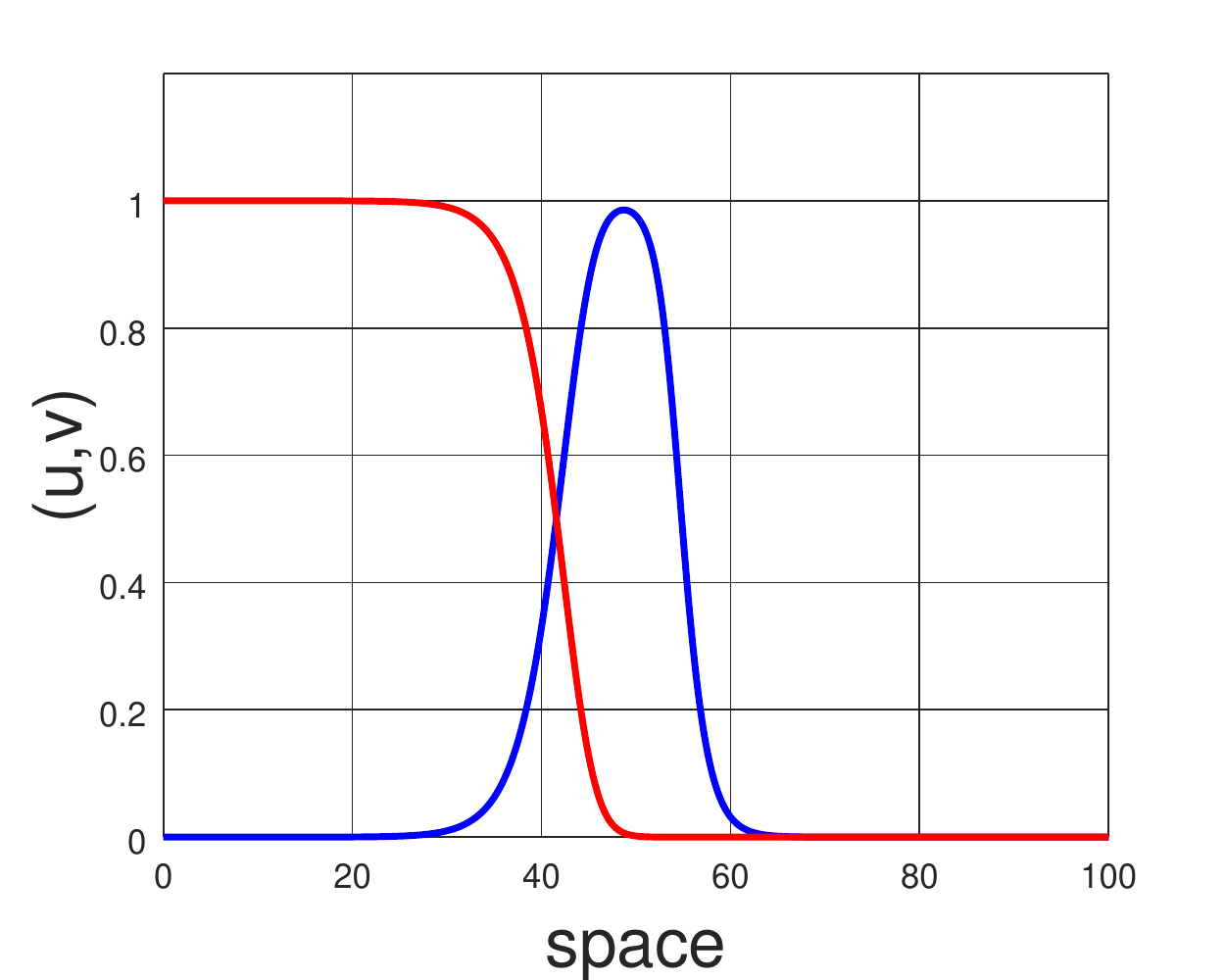}
    \caption{$t = 100$}
    \end{subfigure}
    \begin{subfigure}{0.45\linewidth}
    \includegraphics[width=\linewidth]{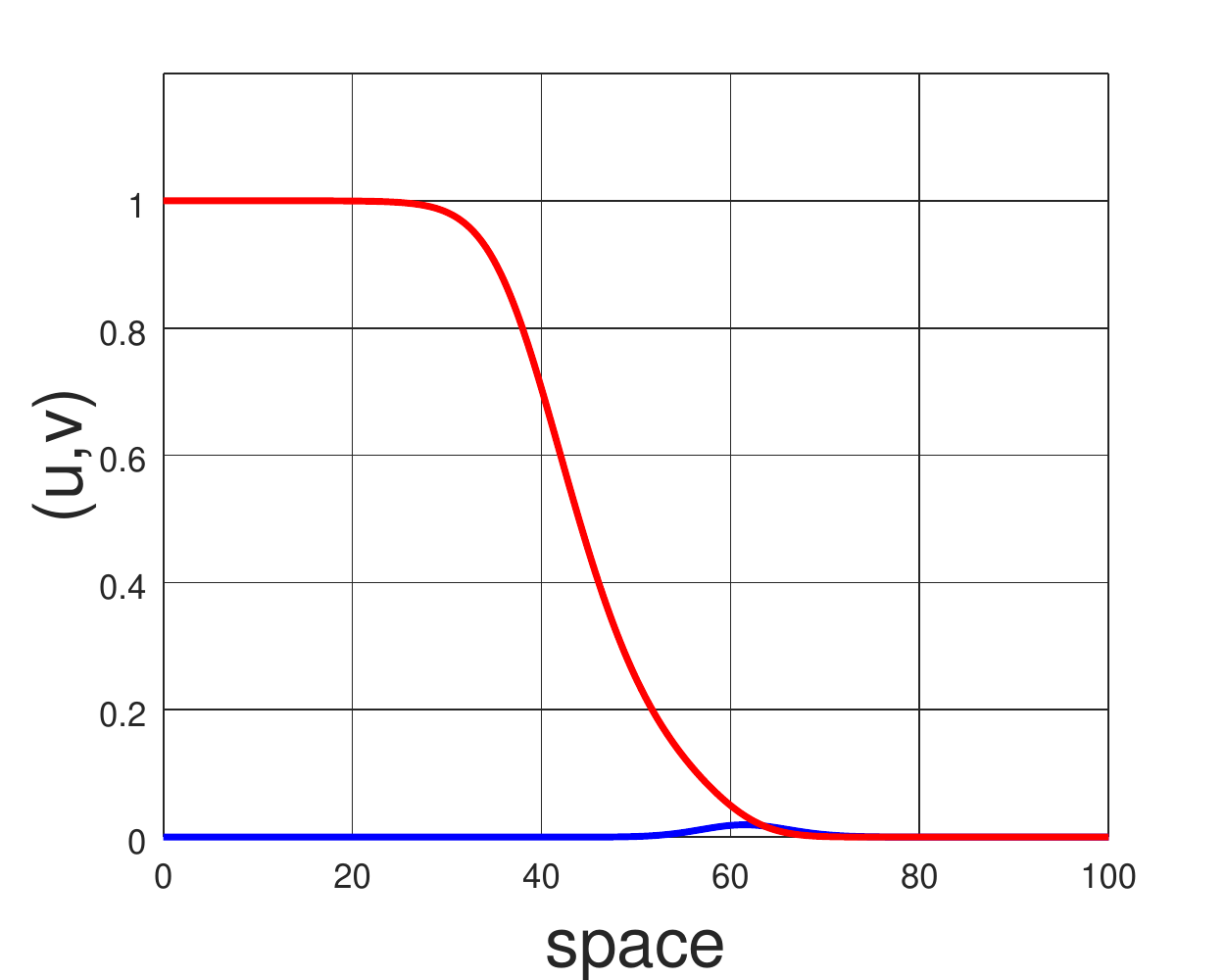}
    \caption{$t = 120$}
    \end{subfigure}
    \begin{subfigure}{0.45\linewidth}
    \includegraphics[width=\linewidth]{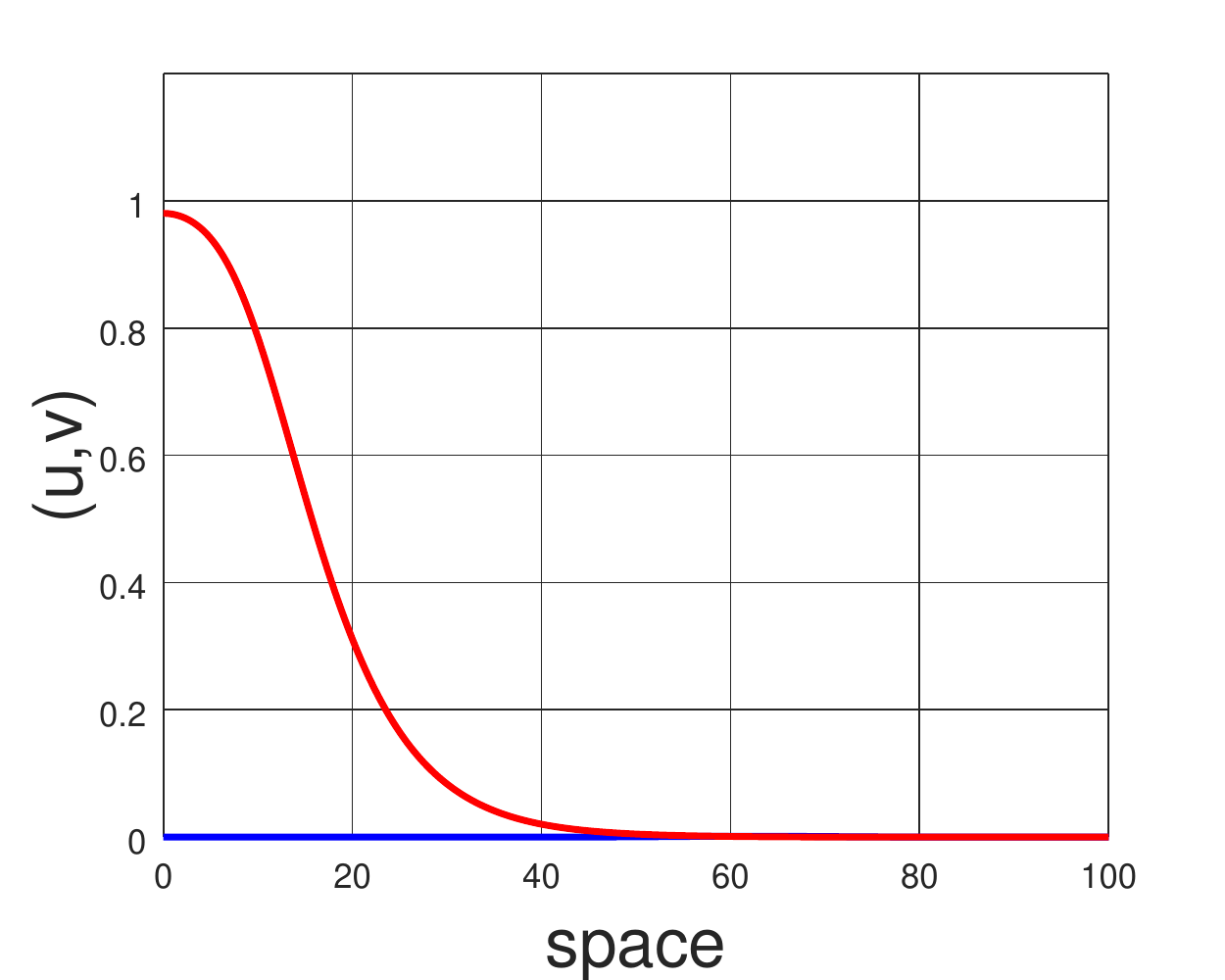}
    \caption{$t = 140$}
    \end{subfigure}
        \begin{subfigure}{0.75\linewidth}
    \includegraphics[width=\linewidth]{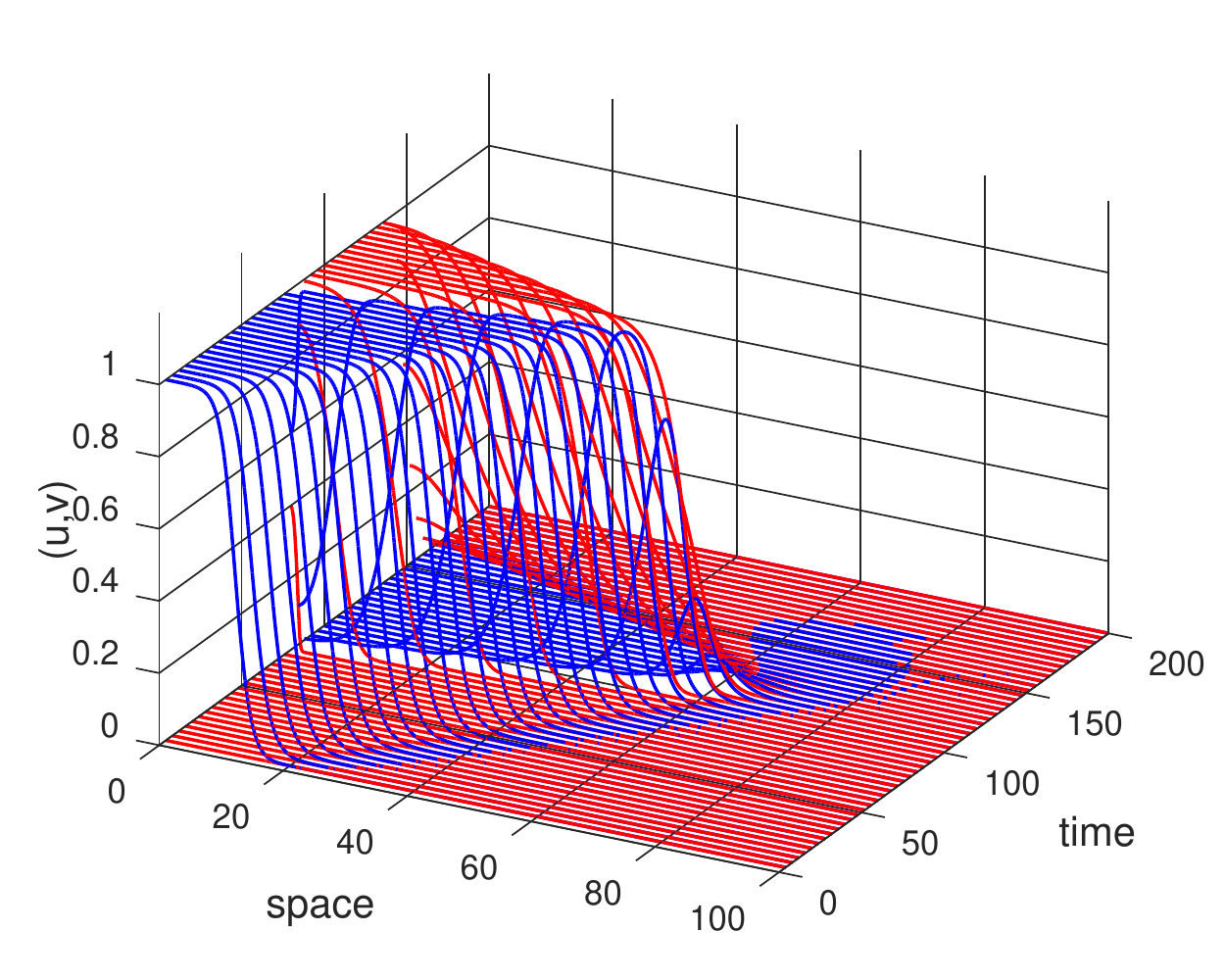}
    \caption{Space-time representation of the solution $(u,v)$}
    \end{subfigure}
    \caption{Same as in Figure \ref{fig:behavioursA}, but with $a = 0.55$ and $b = 0.45$.
    \textcolor{black}{We observe invasion of the drive by the brake, up to extinction of the drive, followed
    by the retraction of the brake, up to complete extinction (meaning restauration of the wild-type).}}\label{fig:behavioursB}
\end{figure}    

\begin{figure}
    \centering
    \begin{subfigure}{0.48\linewidth}
    \includegraphics[width=\linewidth]{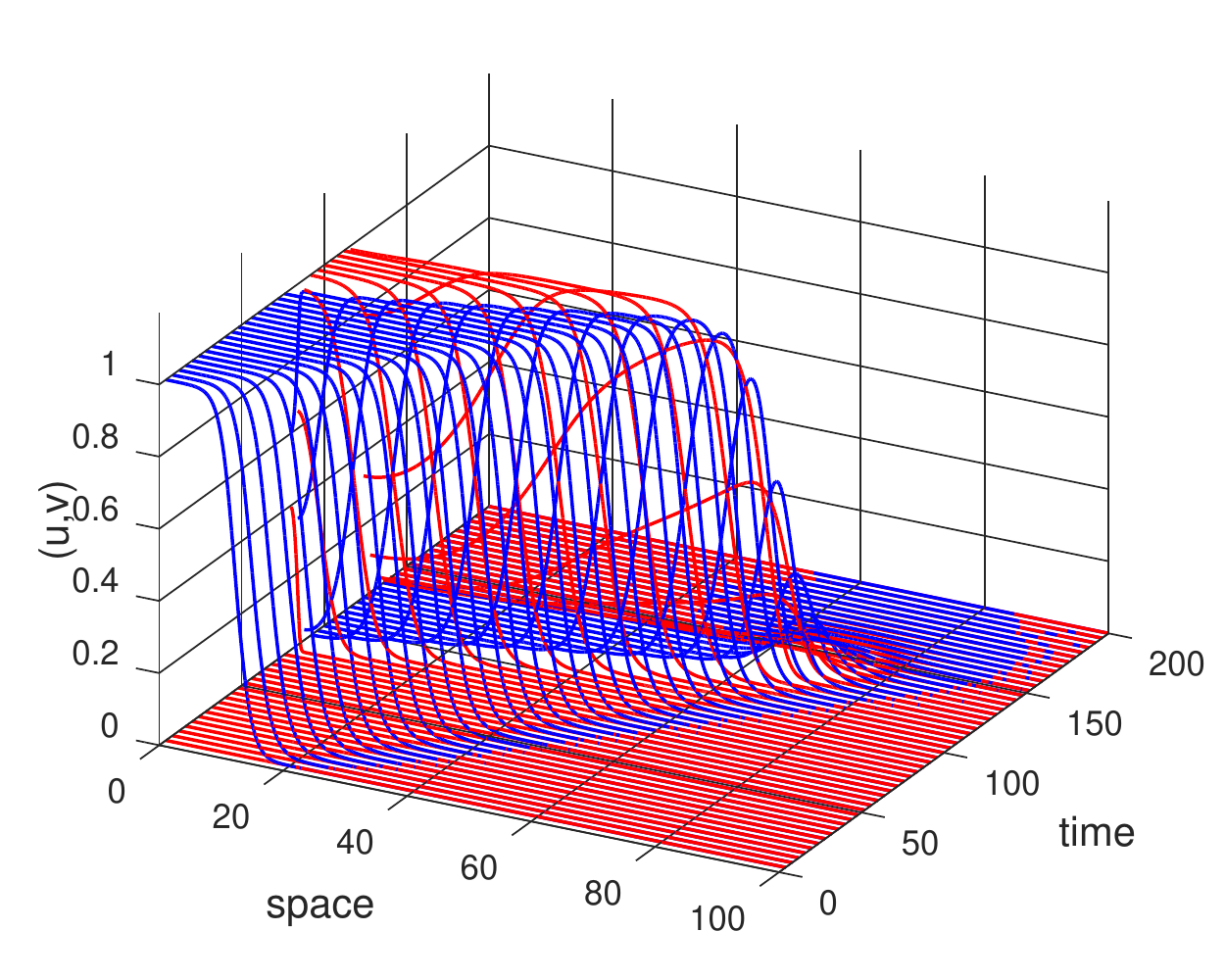}
    \caption{$a= 0.55$, $b = 0.65$}
    \end{subfigure}
    \begin{subfigure}{0.48\linewidth}
    \includegraphics[width=\linewidth]{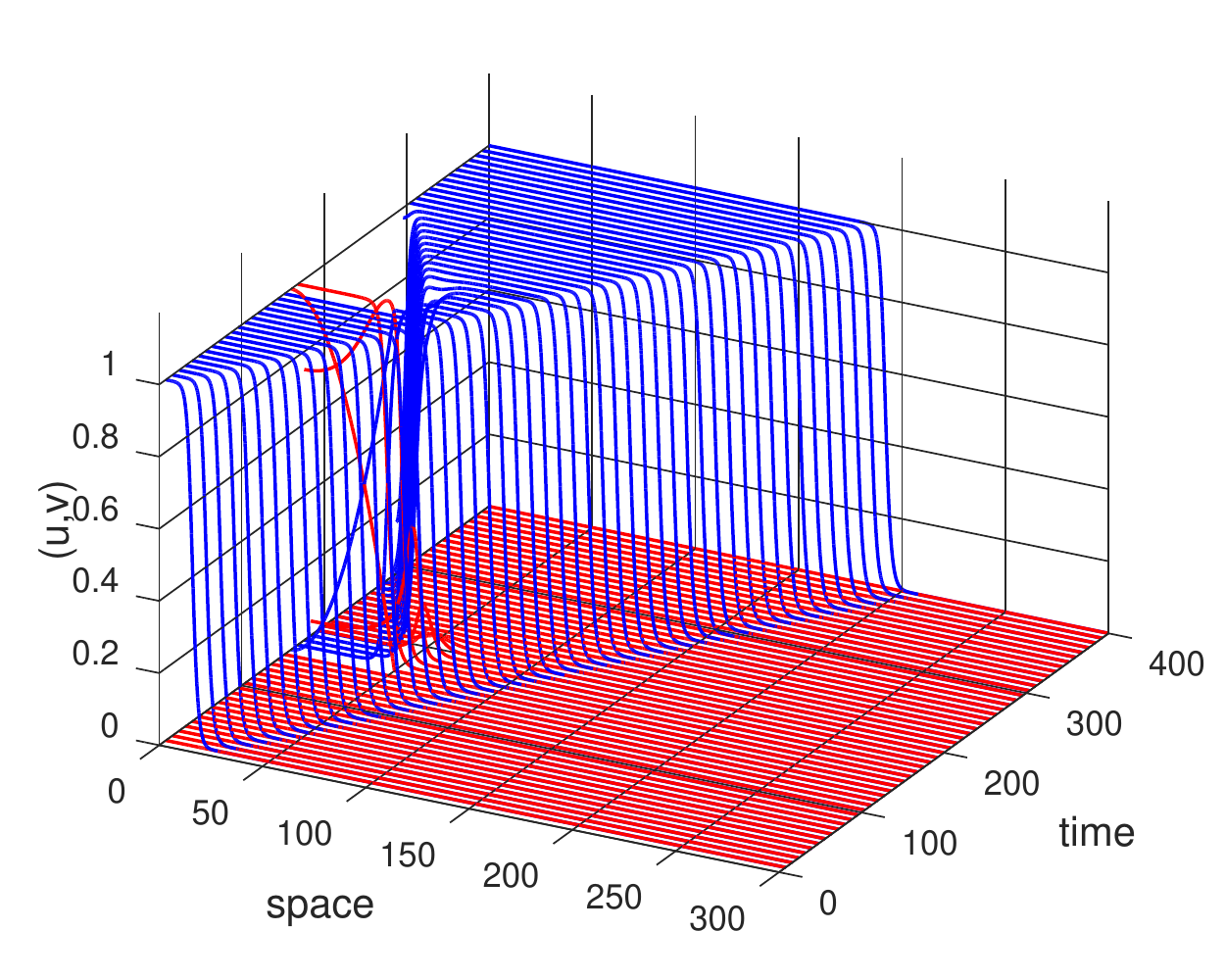}
    \caption{$a= 0.55$, $b = 0.8$}
    \end{subfigure}
    \caption{Further numerical investigation for a \textcolor{black}{larger} disadvantage of the brake $b>a>1/2$ \textcolor{black}{(biologically unrealistic range)}. Uniform coextinction remains true for values of $b$ larger than $a$ (A), but no too large (B). Joint persistence of both species associated with complicated spatio-temporal behaviours were observed for intermediate values of $b$ ($a = 0.55, b = 0.75$, result not shown).}
    \label{fig:behaviours}
\end{figure}

\subsection{Structure of the paper}
Section 2 is devoted to some technical preliminaries and contains in particular an elementary proof
of \thmref{persistence_if_a_is_small}. Section 3 contains the proof of
\thmref{convergence_to_0_if_b_is_small}. \textcolor{black}{In Section 4, }possible extensions are discussed 
and the aforementioned conjecture is explained. \textcolor{black}{Readers not interested in the mathematical proofs can safely skip Sections 2 and 3.}

\section{Preliminaries}
\subsection{Well-posedness}
\textcolor{black}{In this subsection we briefly verify that the model is well-posed mathematically (the denominator
$w(u,v)$ has constant sign) and biologically (the allelic frequencies $u$ and $v$ remain nonnegative and always satisfy $u+v\leq 1$, namely $(u,v)\in\mathsf{T}$).}

\begin{prop}
    The function $w$ satisfies 
    \[
	\max_{\mathsf{T}}w=1\text{ and }\min_{\mathsf{T}}w= 1-a >0.
    \]
    Therefore the system (\ref{sys:general}) is well-posed in $\mathsf{T}$.
\end{prop}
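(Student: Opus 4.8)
The plan is to exploit the biological meaning of $w$: it is nothing but the mean fitness of the population, hence a convex combination of the genotypic fitnesses, and convex combinations of finitely many numbers are trapped between the smallest and the largest of them. Concretely, under the Hardy--Weinberg proportions induced by random union of gametes, the genotypes $OO$, $OD$, $DD$, $OB$, $DB$, $BB$ occur with frequencies $(1-u-v)^2$, $2u(1-u-v)$, $u^2$, $2v(1-u-v)$, $2uv$, $v^2$ and carry fitnesses $1$, $1-a$, $1-a$, $1-hb$, $1-b$, $1-b$ respectively (recalling that conversion sends $OD$ to $DD$ and $DB$ to $BB$, so that the converted heterozygotes inherit the homozygote fitnesses). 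First I would expand the definition of $w$ and check the algebraic identity
\[
w(u,v)=(1-u-v)^2+(1-a)\bigl(u^2+2u(1-u-v)\bigr)+(1-b)\bigl(v^2+2uv\bigr)+(1-hb)\,2v(1-u-v),
\]
which is a routine but instructive computation (the cross-terms are exactly those appearing in $g_1$ and $g_2$).

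The weights appearing on the right-hand side are all nonnegative precisely on $\mathsf{T}$ — this is where the constraints $u,v\geq 0$ and $u+v\leq 1$ enter, guaranteeing $2u(1-u-v)\geq 0$ and $2v(1-u-v)\geq 0$ — and they sum to $\bigl((1-u-v)+u+v\bigr)^2=1$. Therefore, for every $(u,v)\in\mathsf{T}$, $w(u,v)$ is a genuine convex combination of the four values $1$, $1-a$, $1-b$, $1-hb$, whence
\[
\min\{1,\,1-a,\,1-b,\,1-hb\}\leq w(u,v)\leq\max\{1,\,1-a,\,1-b,\,1-hb\}.
\]
It then remains only to order these four numbers. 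Since $h\in[0,1]$ and $b\in(0,1)$ we have $0\leq hb\leq b$, hence $1-b\leq 1-hb\leq 1$; and the standing assumption $a\geq b$ gives $1-a\leq 1-b$. Thus the largest of the four values is $1$ and the smallest is $1-a$, which yields $1-a\leq w\leq 1$ on $\mathsf{T}$.

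To promote these inequalities to equalities I would simply evaluate $w$ at two vertices of $\mathsf{T}$: at $(0,0)$ the population is purely wild-type and the convex combination collapses to the single value $1$, so $\max_{\mathsf{T}}w=1$; at $(1,0)$ the population is purely $DD$ and it collapses to $1-a$, so $\min_{\mathsf{T}}w=1-a$. Finally $1-a>0$ because $a\in(0,1)$. The well-posedness claim follows immediately: since $w$ is bounded below by $1-a>0$ on $\mathsf{T}$, the reaction term $f=\tfrac{1}{w}g-\begin{pmatrix}1\\1\end{pmatrix}$ is smooth (in particular locally Lipschitz) there, so standard parabolic theory applies, the invariance of $\mathsf{T}$ being the separate point flagged for later verification.

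The only genuine step is recognizing the convex-combination structure, equivalently verifying the displayed identity; once it is in place everything reduces to bookkeeping with the chain of inequalities $0\leq hb\leq b\leq a<1$. A brute-force alternative — locating the interior and boundary critical points of $w$ by two-variable constrained optimization over $\mathsf{T}$ — would also work and would confirm the same extremal vertices, but it is far less transparent and obscures the fitness interpretation, so I would not pursue it.
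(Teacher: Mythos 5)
Your proof is correct, and it takes a genuinely different route from the paper's. The paper argues by brute force: it computes the restriction of $w$ to each of the three edges of $\mathsf{T}$ (obtaining minima $1-a$, $1-b$ and $1-a$ respectively), and then rules out interior critical points by solving the linear system for $\nabla w=0$ via Cramer's rule and checking that the unique candidate lies outside $\operatorname{int}\mathsf{T}$. You instead exhibit $w$ as the Hardy--Weinberg mean fitness,
\[
w(u,v)=(1-u-v)^2\cdot 1+\bigl(u^2+2u(1-u-v)\bigr)(1-a)+\bigl(v^2+2uv\bigr)(1-b)+2v(1-u-v)(1-hb),
\]
with nonnegative weights summing to $\bigl((1-u-v)+u+v\bigr)^2=1$ on $\mathsf{T}$, so that $w$ is a convex combination of $1$, $1-a$, $1-b$, $1-hb$; the ordering $0\leq hb\leq b\leq a<1$ (using the standing assumption $a\geq b$ and $h\in[0,1]$) then gives $1-a\leq w\leq 1$, with equality attained at the vertices $(1,0)$ and $(0,0)$. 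I checked the displayed identity: it expands to exactly the paper's definition of $w$. Your argument is shorter, avoids the critical-point computation entirely, and makes the biological content of the bound transparent; what it does not deliver (and does not need to, since the statement only asks for the extremal values) is the paper's additional observation that the minimum is attained only on the boundary of $\mathsf{T}$. Both proofs rely on $a\geq b$ in the same essential way, and your concluding well-posedness remark matches the paper's intent.
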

\begin{proof}
    The function $u\mapsto w\left( u,0 \right)$ coincides with 
    $u\mapsto 1-au^2-2au\left( 1-u \right)$, that is $u\mapsto 1-a+a\left( 1-u \right)^2$, 
    whose minimum and maximum in $\left[ 0,1 \right]$ are respectively $1-a>0$ and $1$.
    Since the bound $w\leq1$ in $\mathsf{T}$ is obvious, this directly shows that the maximum of 
    $w$ in $\mathsf{T}$ is indeed $1$. It only remains to confirm that the minimum is indeed $1-a$.
    We prove this claim in two steps: first, we prove that the minimum of $w$ on $\partial\mathsf{T}$ 
    is $1-a$; second, we prove that there is no critical point of $w$ in the interior of $\mathsf{T}$.

    The function $v\mapsto w\left( 0,v \right)$ coincides with 
    $v\mapsto 1-bv^2-2hbv\left( 1-v \right)$, that is $v\mapsto 1+\left( 2h-1 \right)bv^2-2hbv$.
Its derivative with respect to $b$ is $2b\{v(h-1) + h(v-1)\}\leq 0$. Therefore, its minimum in $[0,1]$   is
  attained at $v=1$ and is  $1-b>0$.

    The function $u\mapsto w\left( u,1-u \right)$ coincides with
    $u\mapsto 1-au^2-b\left( 1-u \right)^2-2bu\left( 1-u \right)$, that is 
    $u\mapsto 1-b-\left( a-b \right)u^2$, whose minimum in $\left[ 0,1 \right]$ is $1-a>0$.

    Therefore the minimum of $w$ on $\partial\mathsf{T}$ is indeed $1-a>0$.

    Next, straighforward algebra shows that any critical 
    point $\left( u^\star,v^\star \right)\in\mathbb{R}^2$ satisfies
    \[
	\begin{pmatrix}
	    a & a-\left( 1-h \right)b \\
	    a-\left( 1-h \right)b & \left( 2h-1 \right)b
	\end{pmatrix}
	\begin{pmatrix}
	    u^\star \\ v^\star
	\end{pmatrix}
	=
	\begin{pmatrix}
	    a \\ hb
	\end{pmatrix}.
    \]
    The determinant of the $2\times2$ matrix above is $-a\left( a-b \right)-\left( 1-h \right)^2b^2$,
    which vanishes if and only if $a=b$ and $h=1$, but in such a case the system reduces to 
    $u^\star+v^\star=1$ so that there are no critical points in the interior of
    $\mathsf{T}$. If $a>b$ or $h<1$, the $2\times2$ matrix is invertible
    and Cramer's rule yields
    \begin{align*}
    \left( u^\star,v^\star \right) & =\frac{1}{-a\left( a-b \right)-\left( 1-h \right)^2b^2}
    \begin{pmatrix}
	    a(2h-1)b-hb\left(a-(1-h)b\right) \\ ahb-\left(a-(1-h)b\right)a
	\end{pmatrix}\\
	& =\frac{1}{a\left( a-b \right)+\left( 1-h \right)^2b^2}
    \begin{pmatrix}
	    b(1-h)(a-hb)\\ a(a-b)
	\end{pmatrix}
    \end{align*}
    This point is in the interior of $\mathsf{T}$ if and only if $b(1-h)(a-hb)<(1-h)^2 b^2$,
    that is if and only if $a-hb<(1-h)b$, that is if and only if $a<b$. Hence it is not in
    the interior of $\mathsf{T}$.

    Therefore the minimum of $w$ in $\mathsf{T}$ is attained only on the boundary and is $1-a>0$.
\end{proof}

\begin{prop}
    Any solution of (\ref{sys:general}) with initial condition in
    $\mathscr{C}\left(\mathbb{R}^N,\mathsf{T}\right)$ is valued
    in $\mathsf{T}$ at all times $t\geq 0$.
\end{prop}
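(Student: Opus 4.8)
The plan is to prove that $\mathsf{T}$ is invariant by treating it as the intersection of the three half-planes $\{u\ge 0\}$, $\{v\ge 0\}$ and $\{u+v\le 1\}$ and showing that each of the three defining inequalities is separately preserved. The crucial structural fact is the one emphasized after (\ref{sys:general}): the parabolic operator $\mathscr{P}$ is the \emph{same} on both components, so any linear combination of $u$ and $v$ again solves a \emph{scalar} linear parabolic equation, and nonnegativity of such a combination will follow from the scalar maximum principle (the version recalled in \appref{WMP}, due to Weinberger \cite{Weinberger_1975}). Under assumption (A6) the transport coefficient $2\nabla(\log n)$ is H\"{o}lder-continuous, so $\mathscr{P}$ is a genuine uniformly parabolic operator with H\"{o}lder coefficients and the maximum principle on $\mathbb{R}^N$ applies to bounded solutions; boundedness is automatic here since $u$ and $v$ are frequencies.

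For the faces $u=0$ and $v=0$ this is immediate from the factored form $\mathscr{P}\binom{u}{v}=\binom{u}{v}\circ f$: the $u$-equation reads $\mathscr{P}u=f_1(u,v)\,u$, a scalar linear parabolic equation for $u$ with zeroth-order coefficient $f_1(u,v)$, so $u_0\ge 0$ forces $u\ge 0$ at all later times, and symmetrically for $v$. The only delicate face is the hypotenuse $u+v=1$, i.e. nonnegativity of the wild-type frequency $o:=1-u-v$. Here I would exploit the redundancy of the first line of the discrete next-generation system noted earlier: setting $g_0(u,v)=1-u-hbv$, a direct expansion yields the conservation identity
\[
 u\,g_1(u,v)+v\,g_2(u,v)+(1-u-v)\,g_0(u,v)=w(u,v).
\]
Since $\mathscr{P}(1)=0$, we have $\mathscr{P}o=-\mathscr{P}u-\mathscr{P}v=-\big(u f_1+v f_2\big)$, and inserting $u g_1+v g_2=w-o\,g_0$ collapses this to
\[
 \mathscr{P}o=o\left(\frac{g_0(u,v)}{w(u,v)}-1\right),
\]
again a scalar linear parabolic equation whose reaction carries the factor $o$. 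Hence $o_0\ge 0$ gives $o\ge 0$, that is $u+v\le 1$ is preserved, and combining the three facts yields $(u,v)\in\mathsf{T}$ for all $t\ge 0$.

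The main obstacle is that the zeroth-order coefficients $f_1$, $f_2$ and $g_0/w-1$ are only guaranteed to be bounded where the solution actually lies in $\mathsf{T}$: by the previous proposition $w\ge 1-a>0$ on $\mathsf{T}$, so $f$ is smooth and bounded on a neighborhood of $\mathsf{T}$, but a priori the solution could leave that neighborhood before invariance is established, which makes the argument circular (and, worse, $w$ may vanish outside $\mathsf{T}$, so $f$ need not even be defined there). I would remove this circularity in the standard way by replacing $f$ with a globally Lipschitz extension $\tilde f$ that agrees with $f$ on a neighborhood of $\mathsf{T}$ and is bounded everywhere; the modified system is then globally well-posed with bounded coefficients, so the three maximum-principle arguments above apply to it unconditionally and show that its solution stays in $\mathsf{T}$. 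On $\mathsf{T}$ one has $\tilde f=f$, so by uniqueness this solution also solves (\ref{sys:general}), which proves the claim. A continuity/bootstrap argument in $t$, starting from the interior invariance on a short existence interval, would be an equally valid alternative to the truncation.
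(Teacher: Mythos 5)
Your proof is correct and rests on the same three tangency facts as the paper's, but it reaches them by a different mechanism. The paper applies Weinberger's \emph{vector-valued} invariant-region theorem directly to the convex set $\mathsf{T}$: one checks that the reaction term $\left(u,v\right)\circ f\left(u,v\right)$ is inward-pointing on each face, and on $\left\{u=0\right\}$, $\left\{v=0\right\}$ and $\left\{u+v=1\right\}$ the relevant normal component in fact vanishes identically --- on the hypotenuse this is precisely your conservation identity $u\,g_1+v\,g_2+\left(1-u-v\right)g_0=w$ with $g_0=1-u-hbv$, which the paper dismisses as ``trivially verified'' without writing it out. You instead decompose $\mathsf{T}$ as an intersection of three half-planes and run the \emph{scalar} maximum principle three times; this works precisely because $\mathscr{P}$ is the same on both lines, so that $u$, $v$ and $o=1-u-v$ each solve a scalar linear parabolic equation whose zeroth-order term carries the quantity itself as a factor, and your computation $\mathscr{P}o=o\left(g_0/w-1\right)$ checks out. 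Your route is more elementary (no vector comparison theorem is needed) and has the merit of making the conservation identity explicit; the paper's route is shorter and, more importantly, is the one that scales to the other convex invariant sets $\mathsf{C}_\mu$ used in the proof of the coextinction theorem, where a reduction to scalar inequalities would be awkward. Your truncation of $f$ to a globally Lipschitz bounded nonlinearity, to break the circularity coming from the fact that $w$ may vanish outside $\mathsf{T}$, is a sound and genuinely useful addition that the paper leaves implicit (Weinberger's statement only requires $\mathbf{f}$ to be Lipschitz on the invariant set $S$).
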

\begin{proof}
    To show that $\mathsf{T}$ is an invariant region of the phase space, we use
    Weinberger's maximum principle \cite{Weinberger_1975}. Indeed, the triangle $\mathsf{T}$ is
    a convex invariant set satisfying the so-called slab condition. Therefore we only have to verify
    that the reaction term is inward-pointing on the boundary of $\mathsf{T}$, namely:
    \[
    \begin{pmatrix}u\\0\end{pmatrix}\circ f\left( u,0 \right)\cdot\begin{pmatrix}0\\1\end{pmatrix}\geq 0
    \text{ for all $u\in\left[0,1\right]$},
    \]
    \[
    \begin{pmatrix}0\\v\end{pmatrix}\circ f\left( 0,v \right)\cdot\begin{pmatrix}1\\0\end{pmatrix}\geq 0
    \text{ for all $v\in\left[0,1\right]$},
    \]
    \[
    \begin{pmatrix}u\\1-u\end{pmatrix}\circ f\left( u,1-u \right)\cdot\begin{pmatrix}1\\1\end{pmatrix}\leq 0
    \text{ for all $u\in\left[0,1\right]$}.
    \]
    These three conditions are trivially verified (the left-hand side being always zero).
\end{proof}

\subsection{The propagation of the gene drive alone\label{subsec:without_brake}}
In what follows, we fix $v=0$ and we briefly review some results described in \cite{Tanaka_Stone_N} about the dynamics of the gene drive invasion.

If $v=0$, then (\ref{sys:general}) reduces to 
\begin{align*}
    \mathscr{P} u & = u f_1(u,0)\\
    & = u\frac{-au^2+\left( 3a-1 \right)u-\left( 2a-1 \right)}{1-a+a\left( 1-u \right)^2}\\
    & = \frac{au\left( 1-u \right)\left( u-\frac{2a-1}{a} \right)}{1-a+a\left( 1-u \right)^2}.
\end{align*}

If $a\leq\frac12$, this is a monostable equation. Additionally, a bit of algebra shows that 
this is an equation of KPP type if and only if
$a\leq\frac{1}{4}$, the KPP property being here understood in the following weak sense: the 
maximal growth rate per capita corresponds to sparse populations, namely 
\[
f_1(0,0)= \max_{u\in\left[ 0,1 \right]}f_1(u,0).
\]

If $a>\frac12$, this is a bistable equation with stable steady states $0$ and $1$ and unstable 
intermediate 
steady state $\theta=\frac{2a-1}{a}\in\left( 0,1 \right)$. All known results on bistable equations, and in 
particular \cite{Fife_McLeod_19}, can therefore be applied to this case. 

In particular, the sign of the following quantity plays a crucial role:
\[
    \int_0^1 \frac{au\left( 1-u \right)\left( u-\frac{2a-1}{a} \right)}{1-a+a\left( 1-u \right)^2}\textup{d}u = \frac{\sqrt{1-a}}{a^{3/2}}\operatorname{arctan}\left( \sqrt{\frac{a}{1-a}} \right)-\frac12 - \frac{1-a}{a}
\]
(the calculation of the integral is not detailed here).
Since this is positive if $a=\frac12$, negative if $a=1$ and since
\[
    \frac{\partial}{\partial a}\left( a\mapsto\operatorname{arctan}\left( \sqrt{\frac{a}{1-a}} \right)-\left( 1-\frac{a}{2} \right)\sqrt{\frac{a}{1-a}} \right)=-\frac{\left( 2a-1 \right)a^{3/2}}{4a\left( 1-a \right)^{3/2}}<0,
\]
there exists a unique $a_0\in\left(\frac12,1\right)$ such that the integral is positive
if $a\in\left[\frac12,a_0\right)$, zero if $a=a_0$ and negative if $a\in\left(a_0,1\right]$.
It satisfies the numerical approximation $a_0\simeq 0.6965$.

As a consequence, in the simplified case where $\mathscr{P}=\partial_t -\Delta$ (\textit{i.e.} the total population 
$n$ is spatially homogeneous), 
solutions $u$ that are initially compactly supported will always go extinct if 
$a\in\left(a_0,1\right]$ and will spread and invade if $a\in[0,a_0)$. If $a\in\left(\frac12,a_0\right)$,
it is also necessary that the initial condition is favorable enough (\textit{i.e.} larger than $\frac{2a-1}{a}$ in a wide region -- see the role of the initial data in the emergence of a wave in the bistable case \cite{Barton_Turelli}, and in particular the existence of bubble-like solutions that can prevent the propagation).

\subsection{Basic phase-plane analysis: spatially uniform stationary states\label{subsec:linear_stability}}
\textcolor{black}{This subsection is devoted to the stability analysis of spatially homogeneous solutions. Interestingly,
tedious computations of Jacobians are unnecessary.}

\subsubsection{\textcolor{black}{Boundary stationary states}}
Similarly to \subsecref{without_brake}, if $u=0$, (\ref{sys:general}) reduces to
\[
\mathscr{P}v=-bv\frac{\left(1-v\right)\left( h(1-v) + v(1-h)\right)}{1-bv^2-2hbv(1-v)}.
\]
The right-hand side has exactly the sign of $-v(1-v)$, whence this is a ``backward-monostable'' equation,
where $0$ is stable and $1$ is unstable. 

Again similarly, if $u+v=1$, the equation satisfied by $u$ reduces to
\[
\mathscr{P}u=-u\frac{\left(1-u\right)\left(1-b+\left(a-b\right)u\right)}{1-b-(a-b)u^2}
\]
and this is also a ``backward-monostable'' equation.

\textcolor{black}{These observations together with the stable and unstable manifold theorems} show that, regarding the diffusionless system, 
\begin{itemize}
    \item $(0,0)$ is a stable node if $a>\frac12$ and is a saddle if $a<\frac12$;
    \item $(1,0)$ is a saddle;
    \item $(0,1)$ is a saddle.
\end{itemize}

\textcolor{black}{Now we turn to the stability of $\left(\frac{2a-1}{a},0\right)$ (which is relevant only if $a>\frac12$ so this is assumed below
without loss of generality). 
As $\left(u,v\right)\to\left(0,0\right)$,
\[
\begin{pmatrix}\frac{2a-1}{a}+u\\v\end{pmatrix}\circ f\left( \frac{2a-1}{a}+u,v \right)
\sim\begin{pmatrix}\frac{2a-1}{a}\partial_u f_1\left(\frac{2a-1}{a},0\right) & 
\frac{2a-1}{a}\partial_v f_1\left(\frac{2a-1}{a},0\right)\\ 0 & f_2\left(\frac{2a-1}{a},0\right)\end{pmatrix}
\begin{pmatrix}u\\v\end{pmatrix}
\]
with $\partial_u f_1\left(\frac{2a-1}{a},0\right)>0$ (see \subsecref{without_brake}) and
\[
f_2\left(\frac{2a-1}{a},0\right)=\frac{\left(1-b+a-b\right)\left(2a-1\right)+\left(1-hb\right)\left(1-a\right)}{a\left(1-a-a\left(1-\frac{2a-1}{a}\right)^2\right)}>0.
\]
Hence $(\frac{2a-1}{a},0)$ is an unstable node.}

All this implies that, for the trajectories we have in mind (\textit{i.e.} contained in the interior of $\mathsf{T}$),
convergence to $(1,0)$, $(0,1)$ or $(\frac{2a-1}{a},0)$ is impossible and convergence to $(0,0)$ is possible
if and only if $a\geq\frac12$. Consequently, \thmref{persistence_if_a_is_small} is proved. 

\subsubsection{\textcolor{black}{Interior stationary states}}
Any stationary state in the interior of $\mathsf{T}$ is a solution of the following algebraic system:
\begin{equation*}
    \begin{cases}
        g_1(u,v)=g_2(u,v),\\
        g_1(u,v)=w(u,v),
    \end{cases}
\end{equation*}
which can be solved explicitly ($g_1-g_2$ is a first-order polynomial while $w-g_1$ is a second-order 
polynomial). We do not perform this resolution here, as it is tedious and useless. We simply point out that:
\begin{itemize}
    \item on one hand, the proof of \thmref{convergence_to_0_if_b_is_small} will imply directly the
    nonexistence of such a coexistence state when $a>\frac12$ and $b<b^\star$;
    \item on the other hand, when $a<\frac12$, the flow is rotating anticlockwise on the boundary of
    $\mathsf{T}$, whence by classical 
    phase-plane arguments (\textit{e.g.} the Poincar\'{e}--Bendixson theorem) there exists such a coexistence state.
    Numerically, we observe that this stationary state is unique and is a stable or unstable spiral, depending on the parameters
    (see \figref{phase_portaits_small_a}).
\end{itemize}

\begin{figure}
    \begin{subfigure}{.9\linewidth}
        \resizebox{\linewidth}{!}{\input{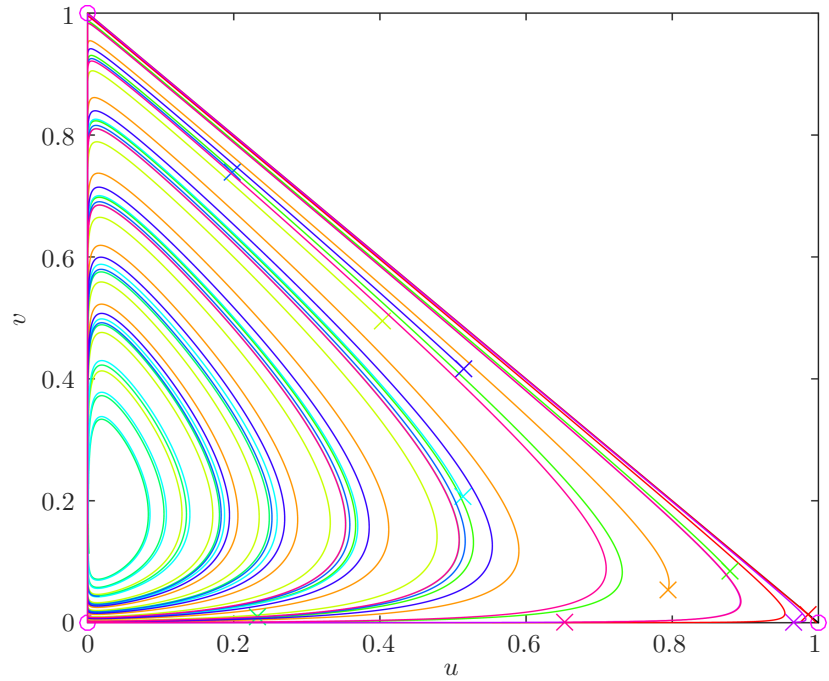}}
        \caption{$a=0.4$, $b=0.1$, $h=0.2$.}
    \end{subfigure}
    \\
    \begin{subfigure}{.9\linewidth}
        \resizebox{\linewidth}{!}{\input{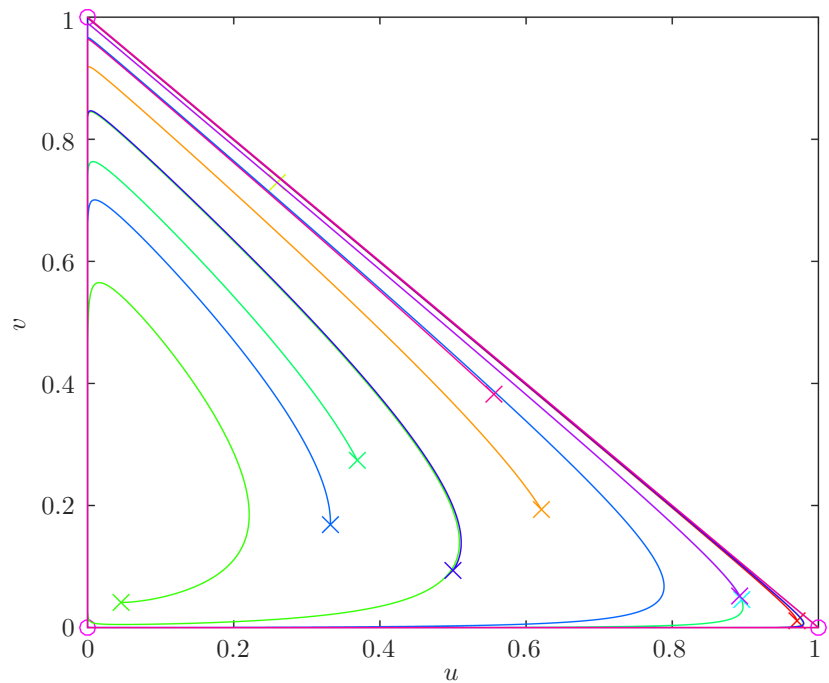}}
        \caption{$a=0.4$, $b=0.1$, $h=0.8$.}
    \end{subfigure}
    \caption{Oscillating trajectories of the homogeneous system with ten random initial conditions 
    and $a<\frac12$. 
    Depending on the value of $h$, the oscillations are either damped (A) or sustained (B). 
    For intermediate values of $h$ (not illustrated here), we find a mixture of both, depending on 
    the initial condition. In all cases,
    the solutions spend a very long time in the neighborhood of $u=0$ (final time of the simulation
    $T=1000$). \textcolor{black}{We refer to Hastings \textit{et al.} \cite{Hastings_2018} for a 
    discussion about the relevance of long transient regimes in biology.}}
    \label{fig:phase_portaits_small_a}
\end{figure}

\subsection{Brake with small selective disadvantage: predator--prey regime\label{subsec:predator-prey_small_b}}
In what follows\textcolor{black}{, we prove that the expected predator--prey structure is indeed true close to $b=0$. In order to do so,} 
it is convenient to understand $b$ as a parameter and to add a subscript $b$ to every object that depends on it. In particular,
we rewrite the term $f_b\left( u,v \right)$ as
\begin{equation}
    f_b\left( u,v \right) =\frac{w_0\left( u,v \right)}{w_b\left( u,v \right)}\left[ f_0\left( u,v \right)+\frac{b}{w_0\left( u,v \right)}\begin{pmatrix}v\left( v+2u+2h\left( 1-u-v \right) \right)\\v\left( v+2u+2h\left( 1-u-v \right)\right)-\left( v+2u+h\left( 1-u-v \right)\right)\end{pmatrix} \right]
    \label{eq:rewritten_reaction}
\end{equation}
with
\begin{align*}
    w_0\left( u,v \right)& =1-au^2-2au\left( 1-u-v \right)\\ & =1+au^2-2au+2auv,
\end{align*}
\begin{align*}
    g_0\left( u,v \right)& =
    \begin{pmatrix}
	\left( 1-a \right)u+2\left( 1-a \right)\left( 1-u-v \right)\\
	v+2u+\left( 1-u-v \right)
    \end{pmatrix}\\
    & =
    \begin{pmatrix}
	2\left( 1-a \right)-\left( 1-a \right)u-2\left( 1-a \right)v\\
	1+u
    \end{pmatrix},
\end{align*}
\begin{align*}
    f_0\left( u,v \right)& =\frac{1}{w_0\left( u,v \right)}g_0\left( u,v \right)-\begin{pmatrix}1\\1\end{pmatrix}\\
    & =\frac{1}{w_0\left( u,v \right)} \begin{pmatrix}-\left( 2a-1 \right)+\left( 3a-1 \right)u-au^2-2v\left( 1-a+au \right)\\u\left( 1+2a -au-2av \right)\end{pmatrix}\\
    & =\frac{1}{w_0\left( u,v \right)} \begin{pmatrix}a\left( 1-u \right)\left( u-\theta \right) -2v\left(1-a+au \right)\\u\left( 1+2a-au-2av \right)\end{pmatrix}.
\end{align*}

\begin{prop}\label{prop:predator-prey}
There exists $\overline{b}_1\in\left(0,1\right]$ such that, if $b\in\left[ 0,\overline{b}_1 \right]$,
then (\ref{sys:general}) has a predator--prey structure where $u$ is the prey and $v$ is 
the predator, namely
\[
    \partial_{v}f_{b,1}<0\text{ and }\partial_{u}f_{b,2}>0\text{ in }\mathsf{T}.
\]
\end{prop}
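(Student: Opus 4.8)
The plan is to treat the statement as a small perturbation of the explicitly solvable case $b=0$, and then let continuity and compactness do the rest. The key structural fact is supplied by the well-posedness proposition: since $w_b\geq 1-a>0$ on $\mathsf{T}$ (indeed uniformly for $b$ in a fixed compact interval), the map $(u,v,b)\mapsto f_b(u,v)$ is a rational function with a denominator bounded away from zero, hence smooth and jointly continuous on the compact set $\mathsf{T}\times[0,1]$; the same holds for its partial derivatives $\partial_v f_{b,1}$ and $\partial_u f_{b,2}$. Thus it suffices to establish the two strict inequalities at $b=0$, uniformly over the \emph{closed} triangle $\mathsf{T}$, and then to propagate them to a one-sided neighborhood $[0,\overline{b}_1]$.

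First I would verify the signs at $b=0$ from the explicit form of $f_0$ displayed just before the statement (together with the identity $a\theta=2a-1$). For $\partial_v f_{0,1}$, the point is that both numerator and denominator of $f_{0,1}$ are affine in $v$: writing $f_{0,1}=\frac{P_0(u)-2v(1-a+au)}{w_{00}(u)+2auv}$, where $w_{00}(u)=1-a+a(1-u)^2$ and $P_0(u)=a(1-u)(u-\theta)$, this is a Möbius function of $v$ whose derivative has the sign of the $v$-independent cross term $-2(1-a+au)\,w_{00}(u)-2au\,P_0(u)$. A direct computation should collapse this expression to the constant $-2(1-a)$, giving $\partial_v f_{0,1}=-2(1-a)/w_0^2<0$ on all of $\mathsf{T}$. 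For $\partial_u f_{0,2}$, I would apply the quotient rule to $Q/w_0$ with $Q=u(1+2a-au-2av)$; after the cubic terms cancel, I expect the numerator to simplify to $1+2a(1-u-v)-au^2$, which on $\mathsf{T}$ (where $1-u-v\geq 0$) is $\geq 1-a>0$, so that $\partial_u f_{0,2}>0$ on $\mathsf{T}$.

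It then remains to promote these strict signs to a neighborhood of $b=0$. The quantities $M_1(b):=\max_{\mathsf{T}}\partial_v f_{b,1}$ and $M_2(b):=\min_{\mathsf{T}}\partial_u f_{b,2}$ are continuous in $b$, being the maximum and minimum of a jointly continuous function over the fixed compact set $\mathsf{T}$. By the previous step $M_1(0)<0$ and $M_2(0)>0$ strictly, so there exists $\overline{b}_1\in(0,1]$ such that $M_1(b)<0$ and $M_2(b)>0$ for every $b\in[0,\overline{b}_1]$, which is precisely the predator--prey structure claimed.

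The main obstacle — in fact essentially the only one — is the explicit verification at $b=0$, and in particular guaranteeing that both inequalities stay \emph{strict} up to and including $\partial\mathsf{T}$, where $f_{0,1}$ and $f_{0,2}$ themselves may vanish. This is where care is required: for $\partial_v f_{0,1}$ a crude bound on the cross term is not sign-definite, so the exact cancellation to the constant $-2(1-a)$ is what makes the sign transparent; likewise, the exact reduction of the numerator of $\partial_u f_{0,2}$ to $1+2a(1-u-v)-au^2$ is what yields the lower bound $\geq 1-a$ valid uniformly over the closed triangle. Once these two algebraic identities are established, the compactness and continuity argument is entirely routine.
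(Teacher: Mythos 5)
Your proposal is correct and follows essentially the same route as the paper: establish the two strict sign bounds at $b=0$ by explicit computation, uniformly over the compact triangle $\mathsf{T}$ (using $w_0\leq 1$ and $w_b\geq 1-a$), and then pass to a one-sided neighborhood $[0,\overline{b}_1]$ by continuity of $f_b$ and its derivatives in $b$. Both of your algebraic identities check out: one finds $\partial_v f_{0,1}=-2(1-a)/w_0^2$ exactly (the paper's displayed intermediate numerator $-2a(2a-1)u^2-(1-a)$ contains a harmless slip, its final bound and conclusion being unaffected), and the numerator of $\partial_u f_{0,2}$ does collapse to $1+2a(1-u-v)-au^2\geq 1-a$ on $\mathsf{T}$.
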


{\color{black}
\begin{rem*}
The condition $b\leq\overline{b}_1$ is necessary. The predator-prey structure fails if $b$ is too large (see below \subsecref{discussion_fitness}).
\end{rem*}
}

\begin{proof}
    First, we study the case of $b=0$. 

    By straightforward algebra and definition of $\theta$,
    \begin{align*}
	\partial_{v}f_{0,1}\left( u,v \right)& =\frac{2}{w_0\left( u,v \right)^2}\left(-\left( 1-a+au \right)w_0\left( u,v \right)-\left( a\left( 1-u \right)\left( u-\theta \right)-2v\left( 1-a+au \right) \right)au\right)\\
	& =\frac{2}{w_0\left( u,v \right)^2}\left( -2a\left( 2a-1 \right)u^2-\left( 1-a \right) \right) \\
	& \leq -\frac{2}{w_0\left( u,v \right)^2}\min_{z\in[0,1]}\left( 2a\left( 2a-1 \right)z^2+\left(1-a\right) \right).
    \end{align*}

    On one hand, if $a\geq\frac12$, the minimum above is $1-a$. On the other hand, if 
    $a<\frac12$, the minimum above is $-2a\left( 1-2a \right)+ 1-a=4a^2-3a+1>0$.
    In all cases, since $w_0 \leq 1$ in $\mathsf{T}$,
    \[
	\partial_{v}f_{0,1}\left( u,v \right)\leq -\frac{2}{w_0\left( u,v \right)^2}\min\left( 1-a,4a^2-3a+1 \right)\leq-2\min\left( 1-a,4a^2-3a+1 \right)<0.
    \]

    Similarly, 
    \begin{align*}
	\partial_{u}f_{0,2}\left( u,v \right)& =\frac{w_0\left( u,v \right)\left( 1+2a\left( 1-u-v \right) \right)+2au\left( 1+au+2a\left( 1-u-v \right) \right)\left( 1-u-v \right)}{w_0\left( u,v \right)^2}\\
	& \geq \frac{1-a}{w_0\left(u,v\right)^2}\geq 1-a >0.
    \end{align*}
    
    Thus (\ref{sys:general}) in the limiting case $b=0$ has indeed the claimed predator--prey structure.

    Finally, thanks to the smooth convergence of $f_b$ to $f_0$ as $b\to0$ and the preceding 
    uniform estimates, the existence of $\overline{b}_1>0$ as in the statement is immediate.
\end{proof}

\section{Proof of \thmref{convergence_to_0_if_b_is_small}\label{sec:proof}}

In this section, \textcolor{black}{we prove the main result of the paper. In the whole section,} we assume 
$a>\frac{1}{2}$ so the drive has a bistable dynamic and we denote $\theta=\frac{2a-1}{a}$ its intermediate steady state. 

Interestingly, when 
$b\leq \overline{b}_1$, the system under consideration has the structure of a 
predator--prey system with Allee effect on the prey. Such systems have been studied in 
the literature but, apart from numerical simulations that show an incredibly wide and
complicated variety of behaviors, almost nothing is known 
(\textcolor{black}{\textit{e.g.},} \cite{Morozov_Petrovskii_Li,Petrovskii_Mor,Wang_Shi_Wei}). 
Fortunately, the analysis of our particular case is indeed possible, thanks to a 
very simple phase-plane structure (that can easily be observed numerically).
\textcolor{black}{More precisely, the very natural idea of the proof is the following: first, use
phase-plane analysis and Weinberger's maximum principle \cite{Weinberger_1975}
to show that $u$ becomes uniformly smaller than its extinction--persistence threshold $\theta$
in finite time, so that $u$ goes extinct independently of the dynamics of $v$; next, use
the predative structure and scalar comparison arguments to show that when its prey $u$ goes extinct,
the predator $v$ goes extinct as well \footnote{\textcolor{black}{Although this is indeed the main idea, it turns out that technical obstacles arise 
and therefore we will also show in the first step that $v$ becomes uniformly smaller than a constant smaller than $1$.}}.}

We will use as in \subsecref{predator-prey_small_b} the subscripts $b$. In order
to study more precisely the role of $b$ in $f_b$, we define additionally
\begin{align*}
    r(u,v) & =\frac{1}{v}\begin{pmatrix}u\\v\end{pmatrix}\circ\begin{pmatrix}v\left( v\textcolor{black}{+2u}+2h\left( 1-u-v \right) \right)\\v\left( v\textcolor{black}{+2u}+2h\left( 1-u-v \right)\right)-\left( v+2u+h\left( 1-u-v \right)\right)\end{pmatrix}\\
    & = \begin{pmatrix}u\left( v\textcolor{black}{+2u}+2h\left( 1-u-v \right) \right)\\v\left( v\textcolor{black}{+2u}+2h\left( 1-u-v \right)\right)-\left( v+2u+h\left( 1-u-v \right)\right)\end{pmatrix}\\
    & = \begin{pmatrix}u\left( v\textcolor{black}{+2u}+2h\left( 1-u-v \right) \right)\\\left( 1-2h \right)v^2+\left( 3h-1 \right)v-h+2\left(\textcolor{black}{1}-h\right)uv-\left( 2-h \right)u\end{pmatrix}\\
    & = \begin{pmatrix}u\left( v+2u+2h\left( 1-u-v \right) \right)\\-\left( 1-v \right)\left( \left(1-2h\right)v+h \right)-u\left( \textcolor{black}{2\left(1-h\right)\left(1-v\right)+h} \right)\end{pmatrix}.
\end{align*}
Thus the reaction term of (\ref{sys:general}) reads
\begin{equation}
    \begin{pmatrix}u\\v\end{pmatrix}\circ f_b\left( u,v \right) =\frac{w_0\left( u,v \right)}{w_b\left( u,v \right)}\left[ \begin{pmatrix}u\\v\end{pmatrix}\circ f_0\left( u,v \right)+\frac{bv}{w_0\left( u,v \right)}r\left( u,v \right) \right].
    \label{eq:rewritten_reaction_for_contractant_sets}
\end{equation}

\begin{rem*}
    The function $r$ satisfies, in $\mathsf{T}$, $r_1\geq 0$ with equality \textcolor{black}{if and
    only if $u=0$} and $r_2\leq 0$ with equality if and only if $(u,v)=(0,1)$.
    This is mostly obvious but we point out that $(1-2h)v+h =   h(1-v) + v(1-h) \geq 0$ cannot vanish if $v\in (0,1)$.
\end{rem*}

\subsection{Geometric lemmas on the phase-plane structure}
\textcolor{black}{The following lemmas basically state that, when $b$ is small enough, the flow in the interior of $\mathsf{T}$ 
rotates anticlockwise around the repulsive node $\left(\theta, 0\right)$. 
\lemref{contractant_sets_1} is concerned with the region $\left\{u>\theta\right\}$;
\lemref{contractant_sets_2} is concerned with the line segment $\left\{u=\theta\right\}$; \lemref{contractant_sets_3} is concerned
with the region $\left\{u<\theta, v\geq \frac{1+a}{2a}\left(\theta-u\right)\right\}$; \lemref{contractant_sets_4} is concerned with
the instability of $\left(\theta,0\right)$.}

\begin{lem}\label{lem:contractant_sets_1}
There exists $\overline{b}_2\in\left(0,1\right]$ such that, if $b\in\left[ 0,\overline{b}_2 \right]$, 
then for all $\mu\geq0$ and all $u\in\left[ \theta,1 \right]$ such that $\mu\left( u-\theta \right)\leq 1-u$,
\begin{equation}\label{eq:contractance_1}
	\left( \begin{pmatrix}u\\\mu\left( u-\theta \right)\end{pmatrix}\circ f_b\left( u,\mu\left( u-\theta \right) \right) \right)\cdot\begin{pmatrix}\mu\\-1\end{pmatrix}\leq0.
\end{equation}
\end{lem}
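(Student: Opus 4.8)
The plan is to dot the reaction term with the inner normal $\begin{pmatrix}\mu\\-1\end{pmatrix}$ of the ray $\{v=\mu(u-\theta)\}$ and show the resulting scalar is nonpositive, which geometrically says the flow never crosses this ray outward, i.e. it turns anticlockwise around $(\theta,0)$. Starting from the decomposition (\ref{eq:rewritten_reaction_for_contractant_sets}) and using that $w_0$ and $w_b$ are both bounded below by $1-a>0$ on $\mathsf{T}$ (well-posedness Proposition), the prefactor $w_0/w_b$ is positive, so the sign of the left-hand side of (\ref{eq:contractance_1}) is the sign of
\[
S_b:=\left(\begin{pmatrix}u\\v\end{pmatrix}\circ f_0(u,v)\right)\cdot\begin{pmatrix}\mu\\-1\end{pmatrix}+\frac{bv}{w_0(u,v)}\,r(u,v)\cdot\begin{pmatrix}\mu\\-1\end{pmatrix},
\]
evaluated at $v=\mu(u-\theta)$. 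I would first dispose of the degenerate cases $\mu=0$ and $u=\theta$: there $v=0$, every term carries a factor $v$, so $S_b=0$ and (\ref{eq:contractance_1}) holds with equality. From now on assume $\mu>0$ and $u\in(\theta,1]$, so that $v>0$.

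The heart of the computation is the limiting term $b=0$. Writing out $u f_{0,1}$ and $v f_{0,2}$ from the explicit formula for $f_0$ and substituting $v=\mu(u-\theta)$, I expect a clean collapse: grouping the terms that are linear and quadratic in $\mu$ and using the identity $a\theta=2a-1$, the linear group contributes $-(1+a)\mu(u-\theta)$ and the quadratic group $-2a\mu^2(u-\theta)$, so that
\[
\left(\begin{pmatrix}u\\v\end{pmatrix}\circ f_0(u,v)\right)\cdot\begin{pmatrix}\mu\\-1\end{pmatrix}=-\frac{uv}{w_0(u,v)}\bigl((1+a)+2a\mu\bigr)\leq0.
\]
This is the algebraically heaviest step, but it is routine once $a\theta=2a-1$ is invoked, and it already gives strict negativity whenever $\mu>0$ and $u>\theta$.

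The decisive observation is that the $b=0$ term and the $b$-correction share the common nonnegative factor $v/w_0$. Factoring it out,
\[
S_b=\frac{v}{w_0(u,v)}\Bigl[-u\bigl((1+a)+2a\mu\bigr)+b\bigl(\mu r_1(u,v)-r_2(u,v)\bigr)\Bigr],
\]
so it suffices to make the bracket nonpositive. By the preceding Remark, $r_1\geq0$ and $r_2\leq0$ on $\mathsf{T}$, hence $\mu r_1-r_2\geq0$; and since $r$ is continuous on the compact set $\mathsf{T}$ it is bounded, so with $M_1:=\max_{\mathsf{T}}r_1$ and $M_2:=\max_{\mathsf{T}}(-r_2)$ one has $\mu r_1-r_2\leq M_1\mu+M_2$. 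Bounding the negative term below via $u\geq\theta$, the bracket is at most the affine function $-\theta\bigl((1+a)+2a\mu\bigr)+b\bigl(M_1\mu+M_2\bigr)$ of $\mu$, which is nonpositive for every $\mu\geq0$ as soon as both its coefficients are, namely $bM_2\leq\theta(1+a)$ and $bM_1\leq2a\theta$. Choosing
\[
\overline{b}_2:=\min\left\{1,\ \frac{\theta(1+a)}{1+M_2},\ \frac{2a\theta}{1+M_1}\right\}\in(0,1]
\]
(positive since $\theta>0$ for $a>\tfrac12$) then secures (\ref{eq:contractance_1}) for all $b\in[0,\overline{b}_2]$.

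The subtle point, and the reason a crude ``$b$ small'' perturbation of the $b=0$ case does not close the argument directly, is that $\mu$ is \emph{unbounded}: the admissibility constraint $\mu(u-\theta)\leq1-u$ permits $\mu\to+\infty$ as $u\to\theta^+$. Both the leading term and the correction vanish on the boundary $\{v=0\}$ of the admissible set, so one cannot rely on a uniform lower bound for the negative $b=0$ term; one must instead cancel the common factor $v$ and then compare the \emph{growth in $\mu$} of the two sides. This is exactly why $\overline{b}_2$ has to control the slope $bM_1\leq2a\theta$ in addition to the constant term, and why the coefficient $2a$ of $\mu$ produced by the $b=0$ computation is what makes the estimate uniform over the whole ray.
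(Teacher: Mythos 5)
Your proof is correct and follows the paper's argument in all essential respects: the same decomposition \eqref{eq:rewritten_reaction_for_contractant_sets}, the same handling of the degenerate cases, the same key identity reducing the $b=0$ contribution to $-\frac{uv}{w_0}\left(1+a+2a\mu\right)$, and the same reduction of the claim to a smallness condition on $b$ against the ratio $\left(\mu r_1-r_2\right)/\left(u\left(1+a+2a\mu\right)\right)$. Where you diverge is the final finiteness step: the paper expands $\mu r_1-r_2$ as an explicit quadratic polynomial in $\mu$ and splits into the cases $h>\frac12$ and $h\leq\frac12$ (invoking the constraint $\mu\left(u-\theta\right)\leq 1-u$ only in the second case), whereas you observe that the constraint forces $\left(u,\mu\left(u-\theta\right)\right)\in\mathsf{T}$, so that $\mu r_1-r_2\leq M_1\mu+M_2$ with $M_i$ the uniform bounds of $r_1$ and $-r_2$ on the compact $\mathsf{T}$, and then compare affine functions of $\mu$ coefficient by coefficient. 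Your route is cleaner: it avoids the polynomial expansion and the case analysis on $h$ entirely, and it yields an explicit (if cruder) admissible $\overline{b}_2$; the paper's computation, by contrast, identifies the sharp supremum defining $\overline{b}_2$, which is of no consequence for the rest of the argument. Your closing remark correctly identifies the genuine difficulty (the unboundedness of $\mu$ near $u=\theta$) and why the coefficient $2a\mu$ produced by the $b=0$ computation is what makes the estimate uniform.
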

\begin{proof}
    Let 
    \[
    \mathsf{C}=\left\{ \left(\mu,u\right)\in\left[0,+\infty\right)\times\left[\theta,1\right]\ |\ \mu\left( u-\theta \right)\leq 1-u \right\}. 
    \]
    In view of (\ref{eq:rewritten_reaction_for_contractant_sets}), (\ref{eq:contractance_1}) is \textcolor{black}{trivial if $\mu=0$ and is
    otherwise} equivalent to
    \[
	\left( \begin{pmatrix}u\\u-\theta\end{pmatrix}\circ f_0\left( u,\mu\left( u-\theta \right) \right)+\frac{b\left( u-\theta \right)}{w_0\left( u,\mu\left( u-\theta \right) \right)}\begin{pmatrix}\mu r_1\left( u,\mu\left( u-\theta \right) \right)\\r_2\left( u,\mu\left( u-\theta \right) \right)\end{pmatrix} \right)\cdot\begin{pmatrix}1\\-1\end{pmatrix}\leq0.
    \]
    Straightforward algebra and the definition of $\theta$ lead to
    \[
	\begin{pmatrix}u\\u-\theta\end{pmatrix}\circ f_0\left( u,\mu\left( u-\theta \right) \right)\cdot\begin{pmatrix}1\\-1\end{pmatrix} = -\frac{u\left( u-\theta \right)}{w_0\left( u,\mu\left( u-\theta \right) \right)}\left(1+a +2a\mu \right)
    \]
    whence (\ref{eq:contractance_1}) is actually equivalent to 
    \[
	-u\left( 1+a+2a\mu \right)+b\begin{pmatrix}\mu r_1\left( u,\mu\left( u-\theta \right) \right)\\r_2\left( u,\mu\left( u-\theta \right) \right)\end{pmatrix}\cdot\begin{pmatrix}1\\-1\end{pmatrix}\leq0.
    \]
    Clearly, this is true for all $\mu\geq 0$ and all $u\in\left[\theta,1\right]$ if $b=0$. Hence from now 
    on we focus on the case $b>0$. In such a case, requiring (\ref{eq:contractance_1}) for all 
    $\left(\mu,u\right)\in\mathsf{C}$ is equivalent to requiring
    \[
	\frac{1}{b}\geq\sup_{\left(\mu,u\right)\in\mathsf{C}}\frac{\max\left(0,\begin{pmatrix}\mu r_1\left( u,\mu\left( u-\theta \right) \right)\\r_2\left( u,\mu\left( u-\theta \right) \right)\end{pmatrix}\cdot\begin{pmatrix}1\\-1\end{pmatrix}\right)}{u\left( 1+a+2a\mu \right)}.
    \]
    In view of the signs of $r_1$ and $r_2$ in $\mathsf{T}$, the right-hand side 
    is positive, so that the above inequality reduces to 
    \[
	\frac{1}{b}\geq\sup_{\left(\mu,u\right)\in\mathsf{C}}\frac{\begin{pmatrix}\mu r_1\left( u,\mu\left( u-\theta \right) \right)\\r_2\left( u,\mu\left( u-\theta \right) \right)\end{pmatrix}\cdot\begin{pmatrix}1\\-1\end{pmatrix}}{u\left( 1+a+2a\mu \right)}>0.
    \]
    It only remains to verify the finiteness of this supremum. 

    Let $\left(\mu,u\right)\in\mathsf{C}$. From
    \[
	\mu r_1\left( u,\mu\left( u-\theta \right) \right)=\left( \mu\textcolor{black}{+2}-2h\left( \mu+1 \right) \right)\mu u^2+\left( 2h+2h\mu\theta-\mu\theta \right)\mu u
    \]
    and
    \begin{align*}
	-r_2\left( u,\mu\left( u-\theta \right) \right) = &-\left( \mu\textcolor{black}{+2}-2h\left( \mu+1 \right) \right)\mu u^2\\
	& -\left( \mu\left( 4h\mu\theta+3h+2h\theta-1-2\mu\theta\textcolor{black}{-2\theta} \right)+h-2 \right)u\\
	& -\left( \mu^2\theta^2+\mu\theta-2h\mu^2\theta^2-3h\mu\theta-h \right),
    \end{align*}
    it follows
    \begin{align*}
	\begin{pmatrix}\mu r_1\left( u,\mu\left( u-\theta \right) \right)\\r_2\left( u,\mu\left( u-\theta \right) \right)\end{pmatrix}\cdot\begin{pmatrix}1\\-1\end{pmatrix}= &\left( (1-2h)\mu^2\theta-(h+2h\theta-1\textcolor{black}{-2\theta})\mu+2-h \right)u\\
	&-\left( (1-2h)\mu^2\theta^2+(1-3h)\mu\theta-h \right),
    \end{align*}
    which reads, as a polynomial of $\mu$,
    \[
	\begin{pmatrix}\mu r_1\left( u,\mu\left( u-\theta \right) \right)\\r_2\left( u,\mu\left( u-\theta \right) \right)\end{pmatrix}\cdot\begin{pmatrix}1\\-1\end{pmatrix}= \left( 1-2h \right)\theta\left( u-\theta \right)\mu^2+\left( \left( 3h-1 \right)\theta-\left( 2h\theta+h-1\textcolor{black}{-2\theta} \right)u \right)\mu+\left( 2-h \right)u+h.
    \]

    On one hand, if $h>\frac12$, the above second-order polynomial of $\mu$ is bounded above by 
    some constant, whence 
    \[
	\sup_{\left(\mu,u\right)\in\mathsf{C}}\frac{\begin{pmatrix}\mu r_1\left( u,\mu\left( u-\theta \right) \right)\\r_2\left( u,\mu\left( u-\theta \right) \right)\end{pmatrix}\cdot\begin{pmatrix}1\\-1\end{pmatrix}}{u\left( 1+a+2a\mu \right)}<+\infty.
    \]
    
    On the other hand, if $h\leq\frac12$, then we use $\mu\left(u-\theta\right)\leq 1-u$ and obtain
    \begin{align*}
	\begin{pmatrix}\mu r_1\left( u,\mu\left( u-\theta \right) \right)\\r_2\left( u,\mu\left( u-\theta \right) \right)\end{pmatrix}\cdot\begin{pmatrix}1\\-1\end{pmatrix} & \leq \left( 1-2h \right)\theta\mu\left(1-u\right)+\left( \left( 3h-1 \right)\theta-\left( 2h\theta+h-1\textcolor{black}{-2\theta} \right)u \right)\mu+\left( 2-h \right)u+h \\
	& \leq \left(\left(1\textcolor{black}{+}\theta\right)u-h\left(u-\theta\right)\right)\mu+\left( 2-h \right)u+h,
    \end{align*}
    whence 
    \[
	\sup_{\left(\mu,u\right)\in\mathsf{C}}\frac{\begin{pmatrix}\mu r_1\left( u,\mu\left( u-\theta \right) \right)\\r_2\left( u,\mu\left( u-\theta \right) \right)\end{pmatrix}\cdot\begin{pmatrix}1\\-1\end{pmatrix}}{u\left( 1+a+2a\mu \right)}<+\infty
    \]
    is again true.

    The proof is ended with 
    \[
	\overline{b}_2=\min\left( 1,\left(  \sup_{\left(\mu,u\right)\in\mathsf{C}}\frac{\begin{pmatrix}\mu r_1\left( u,\mu\left( u-\theta \right) \right)\\r_2\left( u,\mu\left( u-\theta \right) \right)\end{pmatrix}\cdot\begin{pmatrix}1\\-1\end{pmatrix}}{u\left( 1+a+2a\mu \right)}\right)^{-1} \right).
    \]
\end{proof}

\textcolor{black}{\lemref{contractant_sets_2} and \lemref{contractant_sets_3} below are proved by similar (and simpler) considerations.
For the sake of brevity, we do not detail the proofs. Note that \lemref{contractant_sets_3} holds true for any admissible value of $b$.}

\begin{lem}\label{lem:contractant_sets_2}
There exists $\overline{b}_3\in\left(0,1\right]$ such that, if $b\in\left[ 0,\overline{b}_3 \right]$, 
then for all $v\in\left[0,1-\theta\right]$,
\begin{equation*}
	\left( \begin{pmatrix}\theta\\v\end{pmatrix}\circ f_b\left( \theta,v \right) \right)\cdot\begin{pmatrix}1\\0\end{pmatrix}\leq0.
\end{equation*}
\end{lem}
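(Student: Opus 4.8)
The plan is to observe that dotting with $\begin{pmatrix}1\\0\end{pmatrix}$ simply extracts the first coordinate, so that the claim is exactly the statement that the first component of $\begin{pmatrix}\theta\\v\end{pmatrix}\circ f_b(\theta,v)$ is nonpositive for every $v\in\left[0,1-\theta\right]$. I would read this component off the decomposition (\ref{eq:rewritten_reaction_for_contractant_sets}): since $w_0/w_b>0$ throughout $\mathsf{T}$, the sign of that first component coincides with the sign of
\[
\theta f_{0,1}(\theta,v)+\frac{bv}{w_0(\theta,v)}\,r_1(\theta,v).
\]
Thus the whole lemma reduces to controlling this scalar quantity, and in particular to showing it is $\leq0$ once $b$ is small.

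The decisive structural point is that $u=\theta$ is precisely where the Allee factor $a(1-u)(u-\theta)$ in $f_{0,1}$ vanishes. Using the explicit formula for $f_0$, I would compute
\[
f_{0,1}(\theta,v)=\frac{-2v\left(1-a+a\theta\right)}{w_0(\theta,v)},
\]
which is $\leq0$ and, for $v>0$, strictly negative and of exact order $v$. Substituting this into the displayed quantity and multiplying by $w_0(\theta,v)>0$, the desired inequality becomes
\[
v\left[-2\theta\left(1-a+a\theta\right)+b\,r_1(\theta,v)\right]\leq0,
\]
so, because $v\geq0$, it suffices to make the bracket nonpositive. At $b=0$ this is immediate, and the remark on the sign of $r$ guarantees $r_1(\theta,v)\geq0$, so the only thing left is to dominate the $O(b)$ perturbation by the strictly negative $b=0$ part.

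The remaining step — the only one that is not entirely mechanical — is to make this domination uniform in $v$. Here $r_1(\theta,v)=\theta\bigl(v+2\theta+2h(1-\theta-v)\bigr)$ is a polynomial in $v$, hence bounded on the compact interval $\left[0,1-\theta\right]$; I would set $M=\max_{v\in[0,1-\theta]}r_1(\theta,v)$, which is finite and (since $\theta>0$) positive, and then take
\[
\overline{b}_3=\min\left(1,\frac{2\theta\left(1-a+a\theta\right)}{M}\right).
\]
For $b\in[0,\overline{b}_3]$ the bracket is nonpositive for all admissible $v$, which closes the argument. The ``hard part'' is therefore merely this uniform bound, and it is trivialized by compactness of $\left[0,1-\theta\right]$; this is exactly why the authors can declare the proof simpler than that of \lemref{contractant_sets_1}, where the supremum had to be taken over the unbounded parameter $\mu$.
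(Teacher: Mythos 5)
Your proof is correct and follows exactly the route the paper intends: the paper omits the details, stating only that this lemma is proved by ``similar (and simpler) considerations'' to \lemref{contractant_sets_1}, and your argument is precisely that --- the same decomposition (\ref{eq:rewritten_reaction_for_contractant_sets}), the observation that the Allee factor kills the $b=0$ term except for the strictly negative $-2v\left(1-a+a\theta\right)$ contribution, and a uniform bound on the $O(b)$ perturbation that is now trivial because the supremum runs over the compact set $\left[0,1-\theta\right]$ rather than over an unbounded $\mu$. (Incidentally $1-a+a\theta=a$, so your threshold simplifies to $\overline{b}_3=\min\left(1,2a\theta/M\right)$.)
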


\textcolor{black}{\begin{lem}\label{lem:contractant_sets_3}
For all $\mu\leq-\frac{1+a}{2a}$ and all $u\in\left[ 0,\theta \right]$ such that $-\mu\left( \theta-u \right)\leq 1-u$,
\begin{equation*}
	\left( \begin{pmatrix}u\\\mu\left( u-\theta \right)\end{pmatrix}\circ f_b\left( u,\mu\left( u-\theta \right) \right) \right)\cdot\begin{pmatrix}\mu\\-1\end{pmatrix}\geq0.
\end{equation*}
\end{lem}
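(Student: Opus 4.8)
The plan is to follow the proof of \lemref{contractant_sets_1} almost verbatim, the key difference being that the threshold $-\frac{1+a}{2a}$ is tailored so that the $b$-independent part of the vector field already rotates the right way, which is precisely why (contrary to \lemref{contractant_sets_1} and \lemref{contractant_sets_2}) no smallness assumption on $b$ is needed. First I would substitute $v=\mu(u-\theta)$ in the decomposition (\ref{eq:rewritten_reaction_for_contractant_sets}) and pair the reaction term with $\begin{pmatrix}\mu\\-1\end{pmatrix}$. Using $\begin{pmatrix}u\\v\end{pmatrix}\circ f_0\cdot\begin{pmatrix}\mu\\-1\end{pmatrix}=\mu\left(uf_{0,1}-(u-\theta)f_{0,2}\right)$, the identity $uf_{0,1}-(u-\theta)f_{0,2}=-\frac{u(u-\theta)(1+a+2a\mu)}{w_0}$ established inside \lemref{contractant_sets_1}, and $r\cdot\begin{pmatrix}\mu\\-1\end{pmatrix}=\mu r_1-r_2$, one is led to
\[
\left(\begin{pmatrix}u\\\mu(u-\theta)\end{pmatrix}\circ f_b\left(u,\mu(u-\theta)\right)\right)\cdot\begin{pmatrix}\mu\\-1\end{pmatrix}=\frac{\mu(u-\theta)}{w_b}\left(-u(1+a+2a\mu)+b(\mu r_1-r_2)\right).
\]
Since $\mu\leq-\frac{1+a}{2a}<0$, $u-\theta\leq0$ and $w_b>0$ in $\mathsf{T}$, the prefactor $\frac{\mu(u-\theta)}{w_b}$ is nonnegative, so the asserted inequality is equivalent to the scalar statement $X(b):=-u(1+a+2a\mu)+b(\mu r_1-r_2)\geq0$.

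The crucial observation is that $\mu\leq-\frac{1+a}{2a}$ is \emph{exactly} the inequality $1+a+2a\mu\leq0$, whence $X(0)=-u(1+a+2a\mu)\geq0$ for every $u\geq0$, with no condition on $b$. As $X$ is affine in $b$ and the admissible range is $b\in[0,a]$, I would write $X(b)=\frac{a-b}{a}X(0)+\frac{b}{a}X(a)$, a convex combination; it therefore suffices to prove $X(0)\geq0$ (just done) together with $X(a)\geq0$, that is
\[
u(1+a+2a\mu)\leq a(\mu r_1-r_2).
\]
This is where the claim that \lemref{contractant_sets_3} holds for \emph{all} admissible $b$ originates: the dangerous $b$-dependence has been reduced to the single worst case $b=a$, rather than forcing $b$ to be small as in \lemref{contractant_sets_1}.

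The endpoint $b=a$ is the main obstacle. Substituting $v=\mu(u-\theta)$ and using $a\theta=2a-1$, I would reorganize $X(a)$ so as to split off a manifestly nonnegative part,
\[
X(a)=a(1-v)\left((1-2h)v+h\right)+u\,Y,
\]
where the first summand is nonnegative in $\mathsf{T}$ (it is $-r_2$ evaluated at $u=0$, nonnegative since $(1-2h)v+h=h(1-v)+v(1-h)\geq0$, cf.\ the remark following (\ref{eq:rewritten_reaction_for_contractant_sets})), so that only $Y\geq0$ remains, and only on $\{u>0\}$. Written as a polynomial in $\mu$ (equivalently in $s=-\mu$, with $v=s(\theta-u)$), $Y$ is quadratic with leading coefficient proportional to $-(1-2h)(\theta-u)$, which forces the same case distinction on $h$ versus $\frac12$ as in \lemref{contractant_sets_1}. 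When $h\leq\frac12$ the quadratic is concave in $s$ on the bounded segment cut out by $\mu\leq-\frac{1+a}{2a}$ and $v=\mu(u-\theta)\leq1-u$, so its minimum is attained at an endpoint and it suffices to check the inequality there: at $\mu=-\frac{1+a}{2a}$ it reduces to the pointwise bound $-r_2\geq\frac{1+a}{2a}r_1$, and at $v=1-u$ one lands on the hypotenuse, where $r_1=-r_2=u(1+u)$ and the residual inequality is elementary. When $h>\frac12$ the quadratic is convex, so endpoint checking no longer suffices and one must instead control its vertex, either by showing the discriminant has the right sign or by locating the vertex relative to the interval; this is the most tedious point of the argument. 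In every case the bookkeeping is strictly lighter than in \lemref{contractant_sets_1}, since here no supremum over $(\mu,u)$ has to be extracted — the good sign at $b=0$ is free.
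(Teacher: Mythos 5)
Your reduction follows the template of \lemref{contractant_sets_1} that the paper itself points to (it gives no detailed proof, only the remark that the argument is ``similar and simpler''), and the first half of your proposal is correct: substituting $v=\mu(u-\theta)$ into (\ref{eq:rewritten_reaction_for_contractant_sets}), pairing with $(\mu,-1)$ and factoring out $\frac{\mu(u-\theta)}{w_b}\geq 0$ does reduce the claim to $X(b):=-u(1+a+2a\mu)+b(\mu r_1-r_2)\geq 0$; the observation that $\mu\leq-\frac{1+a}{2a}$ is exactly $1+a+2a\mu\leq 0$, so that $X(0)\geq 0$ for free, is the right explanation of why no smallness of $b$ is required; and the convex-combination step $X(b)=\frac{a-b}{a}X(0)+\frac{b}{a}X(a)$ is valid and correctly exploits the admissibility constraint $b\leq a$ (for $b$ close to $1$ the inequality can genuinely fail, e.g.\ near the hypotenuse as $u\to\theta$).

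The gap is in the treatment of $X(a)$. The claim that after splitting off $a(1-v)\bigl((1-2h)v+h\bigr)$ ``only $Y\geq 0$ remains'' is false: $Y$ is negative on a large part of the admissible region. At $h=1$ one computes $Y=asv-1$, which tends to $-1$ as $u\to\theta$ (there $v=s(\theta-u)\to 0$ for bounded $s$), and it is precisely the split-off term $a(1-v)^2$ that rescues the inequality, so it cannot be discarded. Tellingly, the endpoint identities you then invoke --- at $\mu=-\frac{1+a}{2a}$ the claim becomes $-r_2\geq\frac{1+a}{2a}r_1$, and on the hypotenuse $r_1=-r_2=u(1+u)$ with residual inequality $(1-a)^2\geq 0$ --- are statements about $X(a)$, not about $Y$, so your own computations silently abandon the $Y$-reduction. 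The repair is to run the quadratic-in-$s$ analysis on $X(a)$ itself, whose leading coefficient is $-a(1-2h)(\theta-u)\theta$: for $h\leq\frac12$ it is concave and your two endpoint checks (which I confirm are correct) close the argument; for $h>\frac12$ it is convex and the vertex value must actually be computed --- this is not optional, and you leave it undone. It does work out (at $h=1$ the minimum of the parabola equals $\frac{u(1-a)^2}{a\theta}\geq 0$, consistent with the inequality becoming tight as $a\to 1$), but as written the convex case is missing and the concave case rests on a sufficient condition that does not hold.
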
}

Next, using the fact that $(\theta,0)$ is an unstable node (see \subsecref{linear_stability}), 
we can prove the following similar lemma.

\begin{lem}\label{lem:contractant_sets_4}
There exists $\overline{\eta}\in\left(0,\frac{1-\theta}{4}\right)$ such that, for all
$\eta\in\left(0,\overline{\eta}\right)$ and all $\mu>0$:
\begin{enumerate}
    \item for all $u\in\left[ \theta-\eta,\theta+\eta \right]$,
    \[
	\left( \begin{pmatrix}u\\\min\left(1,\frac{\mu}{2}\right)\left( u-\theta+\eta \right)\end{pmatrix}\circ f_b\left( u,\min\left(1,\frac{\mu}{2}\right)\left( u-\theta+\eta \right) \right) \right)\cdot\begin{pmatrix}\min\left(1,\frac{\mu}{2}\right)\\-1\end{pmatrix}\leq0.
	\]
    \item the straight lines of equation $v=\mu\left(u-\theta\right)$ and 
    $v=\min\left(1,\frac{\mu}{2}\right)\left(u-\theta+\eta \right)$ intersect \textcolor{black}{at 
    \[
	u=
	\begin{cases}
	    \theta+\eta & \text{if }\mu\leq 2, \\
	    \theta+\frac{\eta}{\mu-1} & \text{if }\mu>2,
	\end{cases}
    \]
    and the intersection point is in the interior of $\mathsf{T}$.}
\end{enumerate}
\end{lem}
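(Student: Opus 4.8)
The statement splits into an elementary geometric part (2) and an analytic part (1), and I would dispatch (2) first. Writing $\nu=\min(1,\tfrac\mu2)\in(0,1]$, the two lines $v=\mu(u-\theta)$ and $v=\nu(u-\theta+\eta)$ meet where a single linear equation holds: for $\mu\le2$ one has $\nu=\tfrac\mu2$ and $2(u-\theta)=u-\theta+\eta$ gives $u=\theta+\eta$, while for $\mu>2$ one has $\nu=1$ and $(\mu-1)(u-\theta)=\eta$ gives $u=\theta+\tfrac{\eta}{\mu-1}$. Membership of the intersection point in the interior of $\mathsf T$ is then immediate: in both cases $0<u-\theta\le\eta$, so the ordinate $v=\mu(u-\theta)$ satisfies $0<v\le2\eta$, whence $u<1$, $v>0$ and $u+v\le\theta+3\eta<1$ thanks to the standing bound $\eta<\tfrac{1-\theta}4$.

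For (1), the underlying idea is that the inequality is a quantitative form of the instability of $(\theta,0)$, so I would argue by linearization. Set $R=\binom{u}{v}\circ f_b$, with components $R_1=uf_{b,1}$ and $R_2=vf_{b,2}$, and recall from \subsecref{linear_stability} that the Jacobian of $R$ at $(\theta,0)$ is upper triangular,
\[
R(\theta+s,v)=\begin{pmatrix}\alpha&\beta\\0&\gamma\end{pmatrix}\begin{pmatrix}s\\v\end{pmatrix}+O(\rho^2),\qquad \rho=\sqrt{s^2+v^2},
\]
with $\alpha=\theta\,\partial_uf_{b,1}(\theta,0)>0$ and $\gamma=f_{b,2}(\theta,0)>0$ (\subsecref{linear_stability}), and $\beta=\theta\,\partial_vf_{b,1}(\theta,0)<0$ (\propref{predator-prey}, valid for $b\le\overline b_1$). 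On the segment under consideration I would use the parameter $t=u-\theta+\eta\in[0,2\eta]$, so that $s=t-\eta$ and $v=\nu t$. The crucial bookkeeping point is that $R_2=vf_{b,2}$ carries the factor $v=\nu t$ while $R_1$ is multiplied by $\nu$ in the scalar product with $(\nu,-1)$; hence the entire left-hand side factors as $\nu\,H(t)$ with
\[
H(t)=\alpha(t-\eta)+(\beta\nu-\gamma)t+O(\eta^2),
\]
the remainder being $O(\eta^2)$ \emph{uniformly} in $\nu\in(0,1]$ and $t\in[0,2\eta]$.

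It then suffices to check that the affine main part $L(t)=(\alpha+\beta\nu-\gamma)t-\alpha\eta$ of $H$ is negative on $[0,2\eta]$ with a margin of order $\eta$. Its endpoint values are $L(0)=-\alpha\eta$ and $L(2\eta)=\eta(\alpha+2\beta\nu-2\gamma)$; the first is $\le-\alpha\eta<0$, and since $\beta<0$ the second is $\le\eta(\alpha-2\gamma)$. A short computation at $b=0$ gives $\alpha=2a-1$ and $\gamma=\tfrac{2(2a-1)}{1-a}$, whence $\alpha<2\gamma$ with room to spare (indeed $\tfrac{4}{1-a}>4>1$); this strict inequality, together with $\beta<0$ and $\alpha,\gamma>0$, persists for $b$ in a neighbourhood of $0$ by smoothness, and I call $\overline b_4$ the resulting threshold. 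Being affine with both endpoint values $\le-c\eta$ for some $c>0$ uniform in $\nu$, $L$ is itself $\le-c\eta$ throughout $[0,2\eta]$, so $H\le-c\eta+C\eta^2<0$ once $\eta<\overline\eta:=\min(\tfrac{1-\theta}4,\tfrac cC)$. Since $\nu>0$, this yields $\nu H(t)\le0$, which is exactly the claimed inequality, for all $\mu>0$, all $\eta\in(0,\overline\eta)$ and all $u\in[\theta-\eta,\theta+\eta]$.

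The single genuine obstacle is controlling the estimate uniformly down to arbitrarily small slopes $\mu$ (that is, $\nu\to0$): a careless expansion would only produce a bound of order $\nu\eta$, swamped by the quadratic remainder as $\nu\to0$. The structural fact that rescues the argument is that the second reaction component $R_2=vf_{b,2}$ carries the factor $v=\nu t$, so the common $\nu$ can be extracted from the whole scalar product \emph{before} estimating, reducing everything to the sign of a single affine function of $t$ whose coefficients are the explicit node data $\alpha,\beta,\gamma$. The only quantitative input beyond the instability of $(\theta,0)$ is the inequality $\alpha<2\gamma$, which holds comfortably at $b=0$ and hence on a full range $b\le\overline b_4$.
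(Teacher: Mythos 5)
The paper does not actually prove this lemma: it only records that it follows from the instability of the node $(\theta,0)$ and is ``similar'' to the preceding ones, so there is no written argument to compare yours against line by line. Your proof supplies exactly the argument the authors gesture at, and it is correct. Part (2) is the expected elementary computation. For part (1), the two points that make the argument work are both present and handled properly: first, the explicit factor $v=\nu t$ in the second reaction component lets you pull the common factor $\nu=\min\left(1,\tfrac{\mu}{2}\right)$ out of the whole scalar product \emph{before} Taylor-expanding, which is what gives uniformity as $\mu\to0$ (a naive expansion would indeed lose to the quadratic remainder there); second, the affine principal part $L(t)=(\alpha+\beta\nu-\gamma)t-\alpha\eta$ only needs to be checked at the endpoints $t=0$ and $t=2\eta$, where negativity reduces to $\alpha>0$ and $\alpha+2\beta\nu<2\gamma$, the latter following from $\beta<0$ and the inequality $\alpha<2\gamma$. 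I checked your values $\alpha=2a-1$ and $\gamma=\tfrac{2(2a-1)}{1-a}$ at $b=0$ against \subsecref{without_brake} and \subsecref{linear_stability}; they are right, and $2a-1<\tfrac{4(2a-1)}{1-a}$ holds for all $a\in\left(\tfrac12,1\right)$.

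One caveat, which is arguably a defect of the lemma's statement rather than of your proof: you need $b$ small (you use $\beta<0$, i.e.\ $b\le\overline b_1$, and the persistence of $\alpha<2\gamma$, your $\overline b_4$), whereas the lemma as printed carries no hypothesis on $b$. Some such hypothesis is genuinely necessary: for $b$ close to $a$ and $h$ close to $1$ one finds $f_{b,2}(\theta,0)<0$ (e.g.\ $a=b=0.55$, $h=1$ gives $g_2(\theta,0)/w(\theta,0)=0.65<1$), so the flow points \emph{out of} $\mathsf{C}_\mu$ near the corner and the claimed inequality fails. Your threshold $\overline b_4$ should therefore be recorded and added to the minimum in the hypotheses of the restated theorem in \secref{proof} (unless one checks that $b\le\overline b_1$ already forces $\alpha<2\gamma$, which is not obvious since $\overline b_1$ is not explicit). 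This does not affect the paper's results, which only claim coextinction for $b$ below some unspecified threshold.
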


These lemmas will be used to construct a family of convex sets, illustrated in \figref{sets_C_mu}.

\subsection{Main proof}
We are now in position to prove the main theorem\textcolor{black}{, \thmref{convergence_to_0_if_b_is_small}, whose statement is recalled and precised below}.

\begin{thm*}
    Assume $b\leq\min\left(\overline{b}_1,\overline{b}_2,\overline{b}_3\right)$.

    Let $(u,v)$ be the solution of (\ref{sys:general}) with initial condition 
    $\left(u_0,v_0\right)\in\mathscr{C}\left(\mathbb{R}^N,\mathsf{T}\right)$ satisfying
    \[
	v_0\neq 0\quad\text{and}\quad\lim_{\|x\|\to+\infty}\left( u_0,v_0 \right)(x)=\left( 0,0 \right).
    \]
    
    Then coextinction occurs, namely
    \[
	\lim_{t\to+\infty}\left(\sup_{x\in\mathbb{R}^N}u\left( t,x \right)+\sup_{x\in\mathbb{R}^N}v\left( t,x \right)\right)=0.
    \]
\end{thm*}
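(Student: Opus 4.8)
The plan is to execute the two-step strategy announced in the introduction of Section~\ref{sec:proof}: first show that $u$ is driven uniformly below $\theta$ in finite time (while controlling $v$ away from $1$), and then exploit the predator--prey structure to propagate the extinction of the prey $u$ into extinction of the predator $v$. The geometric lemmas \ref{lem:contractant_sets_1}--\ref{lem:contractant_sets_4} are tailored precisely to build, for each slope $\mu>0$, a convex set $\mathsf{C}_\mu\subset\mathsf{T}$ bounded by pieces of the lines $v=\mu(u-\theta)$ (over $u\geq\theta$) and $v=\min(1,\tfrac{\mu}{2})(u-\theta+\eta)$ (near $u=\theta$), together with the segment $u=\theta$, such that the reaction vector field $\binom{u}{v}\circ f_b(u,v)$ points inward along $\partial\mathsf{C}_\mu$. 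The inequalities in those lemmas are exactly the inward-pointing conditions (dot products against the outward normals $\binom{\mu}{-1}$ and $\binom{1}{0}$). Once $\mathsf{C}_\mu$ is seen to be convex, invariant, and to satisfy Weinberger's slab condition, the generalized maximum principle of \cite{Weinberger_1975} (recalled in \appref{WMP}) applies.

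First I would make the decay of $u$ quantitative. Because the initial datum $(u_0,v_0)$ tends to $(0,0)$ at spatial infinity, $(u_0,v_0)$ is contained in some $\mathsf{C}_{\mu_0}$ for $\mu_0$ large, hence by Weinberger's principle $(u,v)(t,\cdot)$ stays in $\mathsf{C}_{\mu_0}$ for all $t$; in particular $v$ is bounded away from $1$ uniformly in space and time, which removes the technical obstruction flagged in the footnote (the predative estimate $r_2<0$ degenerates only at $(0,1)$). Next, using \lemref{contractant_sets_4}, the repulsivity of the node $(\theta,0)$ lets me slide the family $\mathsf{C}_\mu$ inward: the flow cannot rest near $u=\theta$, so the smallest invariant $\mathsf{C}_\mu$ containing the orbit shrinks, forcing $\sup_x u(t,x)$ below $\theta$ after a finite time $T_0$. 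Concretely I would argue that on the slab $\{u\geq\theta-\eta\}$ the scalar dynamics of $u$ is strictly pushed leftward (the reaction for the $u$-component is negative there once $v$ is pinned by the $\mathsf{C}_\mu$ confinement), so by a scalar comparison with the bistable equation for $u$ alone (Subsection~\ref{subsec:without_brake}) and the fact that the bistable front fails to invade from data below $\theta$, one gets $\sup_x u(t,x)\to$ something $<\theta$, and then $\sup_x u(t,x)\to 0$.

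Second, with $u$ uniformly small I would close the argument on $v$. Once $\sup_x u(t,\cdot)\leq\varepsilon$ for $t\geq T_0$, the scalar equation for $v$ is, by the predator--prey structure of \propref{predator-prey} ($\partial_u f_{b,2}>0$ means $v$'s growth rate only decreases as $u$ decreases) and the sign of $r_2$, dominated by the ``backward-monostable'' equation $\mathscr{P}v = v\,f_b(0,v)$ computed in \subsecref{linear_stability}, whose reaction has the sign of $-v(1-v)$, so that $0$ is globally attracting from below $1$. A scalar comparison principle for $\mathscr{P}$ (valid under assumption (A6), since the transport term $2\nabla(\log n)\cdot\nabla$ does not affect the comparison of sub/supersolutions) then yields $\sup_x v(t,x)\to 0$. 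The main obstacle, and where the real work concentrates, is the finite-time step $\sup_x u<\theta$: the coupling with $v$ prevents a naive scalar comparison for $u$, and it is exactly here that the confinement in $\mathsf{C}_\mu$ plus the instability lemma \lemref{contractant_sets_4} must be combined carefully to defeat the possibility of a persistent travelling profile of $u$ sustained by $v$ from behind. Keeping $v$ uniformly below $1$ throughout — so that the predative decay of $v$ does not stall — is the subtle bookkeeping that makes the two steps fit together.
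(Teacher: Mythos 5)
Your outline follows the paper's announced two-step strategy and correctly identifies the role of the sets $\mathsf{C}_\mu$, but the step on which everything hinges --- that $\sup_x u$ falls below $\theta$ in finite time --- is asserted rather than proved, and the heuristic you give for it is wrong. The confinement $v\geq\mu(u-\theta)$ does \emph{not} make the $u$-reaction negative on the slab $\{u\geq\theta-\eta\}$: $f_{b,1}(u,v)>0$ whenever $u>\theta$ and $v$ is small, and points with small $v$ are present in every $\mathsf{C}_\mu$. What the lemmas give is only that the flow crosses each line $\mathsf{S}_\mu$ in the direction of increasing $\mu$, i.e.\ that $t\mapsto\mu_t$ is nondecreasing; monotonicity alone does not rule out $\mu_t$ stalling at a finite limit $\mu_\infty$, which would leave part of the solution forever in $\{u>\theta\}$. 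The paper closes this gap with a compactness argument: assuming $T=+\infty$, it extracts (by parabolic estimates, after recentering at the touching points $x_{n,\mu_n}$) an entire solution $(u_\infty,v_\infty)$ valued in $\mathsf{C}_{\mu_\infty}$ and touching $\mathsf{S}_{\mu_\infty}$, uses Weinberger's \emph{strong} maximum principle to force $\mu_\infty=+\infty$, and only then runs the scalar bistable comparison: $u_\infty\leq\theta$ with $u_\infty(0,0)=\theta$ forces $u_\infty\equiv\theta$ and $v_\infty\equiv0$, contradicting $v_\infty\geq\eta$ on $\mathsf{C}_\infty\cap\{u=\theta\}$. Your appeal to ``the bistable front fails to invade from data below $\theta$'' is made before you know the data are below $\theta$, so it cannot substitute for this. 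A second, related error: the initial datum need \emph{not} lie in any $\mathsf{C}_{\mu_0}$ with $\mu_0>0$ (the hypotheses allow $v_0=0$ and $u_0>\theta$ on large regions, and such points are excluded from every $\mathsf{C}_\mu$, $\mu>0$; also the family is \emph{decreasing} in $\mu$, so ``$\mu_0$ large'' is backwards). The paper instead waits until a positive time $T_0$, where the strong maximum principle has pushed the image into the interior of $\mathsf{T}$, and only then chooses $\mu_{T_0}>0$ and $\eta$ with $\mathsf{I}_{T_0}\subset\mathsf{C}_{\mu_{T_0}}$.

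The final step for $v$ is also too quick. After $\sup_x u\leq\varepsilon$, the comparison equation is $\mathscr{P}\tilde v=\tilde v f_{b,2}(\varepsilon,\tilde v)$, and this perturbed reaction is \emph{not} backward-monostable with $0$ attracting from everything below $1$: since $\partial_u f_{b,2}>0$, the perturbation creates an unstable zero near $v=1$ (in the paper's notation, $\hat v=1-v$ must start above the root $z_0$ of $F(z)=C\varepsilon(t_0)$). This is precisely why the paper needs the quantitative confinement $v<\frac{4a^2-(1-a)}{4a^2}<1$ coming from \lemref{contractant_sets_3} before launching the comparison --- you flag this as ``bookkeeping'' but it is a load-bearing estimate, not an afterthought. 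Finally, the case $h=0$ degenerates ($f_{b,2}(0,0)=0$): the perturbed equation then has an extra stable zero $z_1<1$, and one only gets $\liminf\hat v\geq z_1$ for each fixed $t_0$, after which a limit $t_0\to+\infty$ (using $z_1\to1$) is required. Your proposal does not address this case.
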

\begin{proof}
    First, for all $\mu\geq0$, we define the line segment
    \[
	\mathsf{S}_\mu = \left\{ \left( \tilde{u},\tilde{v} \right)\in\mathsf{T}\ |\ \tilde{v}=\mu\left( \tilde{u}-\theta \right) \right\}.
    \]
    For all $t\geq0$, we define the \textcolor{black}{spatial image}
    \[
	\textcolor{black}{\mathsf{I}}_t = \left\{ \left( u,v \right)\left( t,x \right)\ |\ x\in\mathbb{R}^N \right\}\subset\mathsf{T}.
    \]
    For all $t\geq0$ and $\mu\geq0$, we define the (Euclidean \textcolor{black}{in $\mathbb{R}^2$}) distance between $\textcolor{black}{\mathsf{I}}_t$ and $\mathsf{S}_\mu$
    \[
	d\left( t,\mu \right)=\text{dist}\left(\textcolor{black}{\mathsf{I}}_t,\mathsf{S}_\mu\right).
    \]

    By standard parabolic estimates and the fact that $(0,0)$ is a solution of 
    (\ref{sys:general}), the assumptions imply 
    \[
	\lim_{\|x\|\to\infty}\left( u,v \right)(t,x)=\left( 0,0 \right)\text{ for all }t\geq0.
    \]
    Consequently, for all $t\geq0$ and $\mu\geq0$, there exists $x_{t,\mu}\in\mathbb{R}^N$ such that
    \[
	d(t,\mu) =\text{dist}\left( \left( u,v \right)\left( t,x_{t,\mu} \right),\mathsf{S}_\mu\right).
    \]
    Now, let 
    \[
    T=\sup\left\{t\geq0\ |\ \textcolor{black}{\mathsf{I}}_t\cap\left( \theta,1 \right]\times\left[ 0,1 \right]\neq\emptyset\right\}
    \]
    and let us show that $T<+\infty$. Without loss of generality, we restrict the analysis to the case
    $T>0$. 

    For all $t\in\left[ 0,T \right]$, we can define 
    \[
	\mu_t=\inf\left\{ \mu\geq0\ |\ d\left( t,\mu \right)=0\right\}
    \]
    and, by continuity, we find
    \[
	d\left( t,\mu_t \right)=\text{dist}\left( \left( u,v \right)\left( t,x_{t,\mu_t} \right),
	\mathsf{S}_{\mu_t}\right)=0.
    \]

    Next, by virtue of Weinberger's maximum principle \cite{Weinberger_1975} applied in 
    the convex invariant set     $\mathsf{T}$ satisfying the so-called slab condition, for all $t\in\left(0,T\right]$,
    $\left( u,v \right)\left( t,x_{t,\mu_t} \right)\in\text{int}\left(\mathsf{T}\right)$. This directly
    implies that for all $t\in\left(0,T\right]$, $\mu_t>0$. 

    However, in view of \lemref{contractant_sets_1} and \lemref{contractant_sets_4}, for any 
    $\eta\in\left(0,\overline{\eta}\right)$ and any $\mu>0$, the convex set satisfying a slab condition
    \[
	\mathsf{C}_{\mu} = \left\{ \left( \tilde{u},\tilde{v} \right)\in\mathsf{T}\ |\ \tilde{v}\geq\mu\left( \tilde{u}-\theta \right),\tilde{v}\geq \min\left(1,\frac{\mu}{2}\right)\left( \tilde{u}-\theta+\eta \right) \right\}
    \]
    is again an invariant set for (\ref{sys:general}). Now we define $T_0=\min\left(1,\frac{T}{2}\right)$ 
    and fix $\eta\in\left(0,\overline{\eta}\right)$ so small that 
    \[
    \textcolor{black}{\mathsf{I}}_{T_0}\subset\mathsf{C}_{\mu_{T_0}}.
    \]
    Applying again Weinberger's maximum principle,
    we find that for all $\left( t,t' \right)\in\left[ T_0,T \right]^2$ such that $t'>t>T_0$, 
    $\left( u,v \right)\left(t',x_{t',\mu_{t'}}\right)\in\text{int}\left( \mathsf{C}_{\mu_t} \right)$. 
    Hence the family $\left(\mathsf{C}_{\mu_t}\right)_{t\geq T_0}$ is decreasing
    with respect to the inclusion, or in other words the family $\left( \mu_t \right)_{t\geq T_0}$ 
    is increasing. 
    
\begin{figure}
    \resizebox{.9\linewidth}{!}{\input{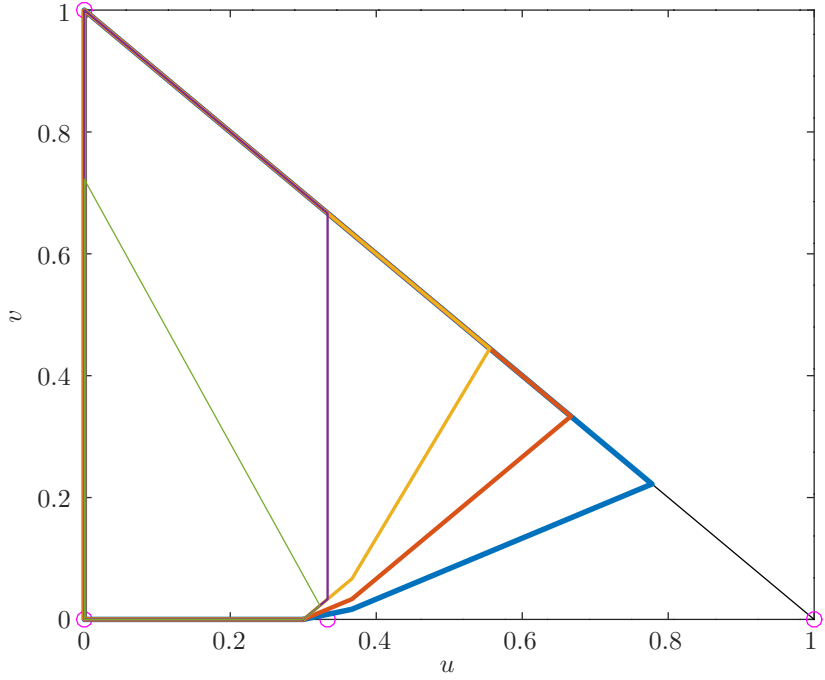}}
    \caption{\textcolor{black}{Each colored polygon is the boundary of a set $\mathsf{C}_\mu$  
    (from right to left: $\mu=0.5, 1, 2, +\infty, -\frac12\left(\frac{1+a}{2a}+\frac{1}{\theta}\right)$).\\
    (The value $\eta=\frac{\theta}{10}$ was chosen here for graphic clarity and was not analytically or numerically checked.)}}
    \label{fig:sets_C_mu}
\end{figure}    

    Assuming by contradiction that $T=+\infty$, we can define 
    \[
	\mu_\infty=\lim_{t\to +\infty}\mu_t\in\left( \mu_{T_0},+\infty \right].
    \]
    \textcolor{black}{We extend naturally the definition of $\mathsf{C}_\mu$ and $\mathsf{S}_\mu$ to the case $\mu=\infty$:}
    \[
    \mathsf{C}_{\infty} = \left\{ \left( \tilde{u},\tilde{v} \right)\in\mathsf{T}\ |\ \tilde{u}\leq\theta,\tilde{v}\geq\left( \tilde{u}-\theta+\eta \right) \right\}
    \quad\text{and}\quad
    \mathsf{S}_\infty =\left\{ \theta \right\}\times\left[ 0,1-\theta \right].
    \]
    Then, by standard parabolic estimates, the family 
    \[
	\left(\left( t,x \right)\mapsto\left( u,v \right)\left( t+n,x+x_{n,\mu_n} \right)\right)_{n\in\mathbb{N}}
    \]
    converges locally uniformly up to extraction to an entire solution 
    $\left( u_\infty,v_\infty \right)$ of (\ref{sys:general}) valued in $\mathsf{C}_{\mu_\infty}$
    and satisfying
    $\left( u_\infty,v_\infty \right)\left( 0,0 \right)\in\mathsf{S}_{\mu_\infty}$.
    Applying again Weinberger's maximum principle and using the fact that 
    $\left(\theta,0\right)\not\in\mathsf{C}_{\mu_\infty}$, 
    we find that necessarily $\mu_\infty=+\infty$. However, by the scalar comparison principle and 
    the predator-prey structure (see \propref{predator-prey}),
    $u_\infty$ is a subsolution for the bistable equation 
    \[
	\mathscr{P}\tilde{u} = \tilde{u} f_{b,1}\left( \tilde{u},0 \right)
    \]
    satisfying $u_\infty\leq\theta$ with $u_\infty(0,0)=\theta$. This directly implies 
    $u_\infty=\theta$, and then back to the system $v_\infty=0$, which \textcolor{black}{contradicts
    $\left( u_\infty,v_\infty \right)\in\mathsf{C}_{\infty}$.}

    Therefore $T<+\infty$, that is $(u,v)$ enters in finite time the domain $\mathsf{C}_\infty$. 
    By \lemref{contractant_sets_2} and again by Weinberger's maximum principle applied this time to $\mathsf{C}_{\infty}$, 
    it actually enters in finite time the interior of $\mathsf{C}_\infty$. 
    {\color{black} By \lemref{contractant_sets_3} and again by Weinberger's maximum principle, 
    it actually enters in finite time the interior of the convex set
    \[
    \mathsf{C}_{-\frac12\left(\frac{1+a}{2a}+\frac{1}{\theta}\right)}=\left\{ \left(\tilde{u},\tilde{v}\right)\in\mathsf{T}\ |\ \tilde{v}\leq-\frac12\left(\frac{1+a}{2a}+\frac{1}{\theta}\right)\left(\tilde{u}-\theta\right),\tilde{v}\geq\left( \tilde{u}-\theta+\eta \right)\right\}.
    \]
    Here we use the inequality $\theta^{-1}>\frac{1+a}{2a}$ which implies the inequality
    $-\frac12\left(\frac{1+a}{2a}+\frac{1}{\theta}\right)<-\frac{1+a}{2a}$.
    
    Therefore we have in finite time, say $\tilde{T}\geq T$, both $u<\theta$ and 
    \[
    v<\frac{\theta}{2}\left(\frac{1+a}{2a}+\frac{1}{\theta}\right)=\frac{4a^2-\left(1-a\right)}{4a^2}<1.
    \]
    }
    Repeating the comparison with the scalar bistable equation 
    \[
	\mathscr{P}\tilde{u} =\tilde{u} f_{b,1}\left( \tilde{u},0 \right),
    \]
    \textcolor{black}{with this time $x\mapsto u\left(\tilde{T},x\right)$ as initial condition,}
    it follows that $u\to 0$ uniformly in space asymptotically in time. 
    
    In view of the convergence of $u$, the function
    \[
	\varepsilon:t\mapsto \sup_{t'\geq t}\left ( \max_{x\in\mathbb{R}^N}u(t',x)\right )
    \]
    converges monotonically to $0$.
    Thanks to the predator--prey structure, it follows just as before that, for all $t_0\geq0$, $v$ is
    in $\left[t_0,+\infty\right)\times\mathbb{R}^N$ a subsolution for the equation
    \[
	\mathscr{P}\tilde{v} = \tilde{v} f_{b,2}\left(\varepsilon\left(t_0\right),\tilde{v}\right).
    \]
    The Taylor expansion of $f_{b,2}$ and the predator--prey structure bring forth the 
    existence of a positive constant $C>0$ such that\textcolor{black}{, for large values of $t_0$,}
    \begin{align*}
	\mathscr{P}v & \leq v\left[ \frac{b\left( -h+\left( 2h-1 \right)v \right)\left( 1-v \right)}{1-2hbv+\left( 2h-1 \right)bv^2} + \partial_{u}f_{b,2}(0,v)\varepsilon\left(t_0\right) + o\left( \varepsilon\left(t_0\right) \right) \right]\\
	& \leq v\left[ \frac{b\left( -h+\left( 2h-1 \right)v \right)\left( 1-v \right)}{1-2hbv+\left( 2h-1 \right)bv^2} + C \varepsilon\left(t_0\right) \right].
    \end{align*}
    Now, thanks to $w_b\leq 1$ and $-h+\left( 2h-1 \right)v\leq 0$, 
    \[
	\frac{b\left( -h+\left( 2h-1 \right)v \right)\left( 1-v \right)}{1-2hbv+\left( 2h-1 \right)bv^2}\leq-b\left(h-(2h-1)v \right)\left( 1-v \right).
    \]
    If $h=1$, we find 
    \[
	\frac{b\left( -h+\left( 2h-1 \right)v \right)\left( 1-v \right)}{1-2hbv+\left( 2h-1 \right)bv^2} \leq -b\left( 1-v \right)^2.
    \]
    If $h\in\left(\frac12,1\right)$, we find
    \[
	\frac{b\left( -h+\left( 2h-1 \right)v \right)\left( 1-v \right)}{1-2hbv+\left( 2h-1 \right)bv^2} \leq -b\left( 1-h \right)\left( 1-v \right).
    \]
    If $h\in\left(0,\frac12\right)$, we find
    \[
	\frac{b\left( -h+\left( 2h-1 \right)v \right)\left( 1-v \right)}{1-2hbv+\left( 2h-1 \right)bv^2} \leq -bh\left( 1-v \right).
    \]
    If $h=0$, we find
    \[
	\frac{b\left( -h+\left( 2h-1 \right)v \right)\left( 1-v \right)}{1-2hbv+\left( 2h-1 \right)bv^2} \leq -bv\left( 1-v \right).
    \]
    Hence the function $\hat{v}=1-v$ satisfies
    \[
	\mathscr{P}\hat{v} \geq \left( 1-\hat{v} \right)\left( F\left( \hat{v} \right)-C \varepsilon\left( t_0 \right) \right)\textcolor{black}{\text{ in }\left[t_0,+\infty\right)\times\mathbb{R}^N,}
    \]
    where 
    \[
	F:z\in\left[ 0,1 \right]\mapsto
	\begin{cases}
	    bz^2 & \text{if }h=1,\\
	    b\left( 1-h \right)z & \text{if }h\in\left(\frac12,1\right)\\
	    bhz & \text{if }h\in\left(0,\frac12\right)\\
	    bz\left(1-z\right) & \text{if }h=0.
	\end{cases}
    \]
    In all cases, $F$ is increasing in $\left(0,\frac12\right)$ and maps
    $\left[0,\frac12\right]$ onto $\left[0,F\left(\frac12\right)\right]$. Assume that
    $t_0$ is so large that $C\varepsilon\left(t_0\right)<F\left(\frac12\right)$ and let
    $z_0\in\left(0,\frac12\right)$ be the root of $F(z)=C\varepsilon(t_0)>0$. 
    \textcolor{black}{Assume further that $t_0$ is so large that $t_0\geq\tilde{T}$ and $z_0<1-\frac{4a^2-\left(1-a\right)}{4a^2}$, so that
    \[
    \min_{x\in\mathbb{R}^N}\hat{v}\left(t_0,x\right)>z_0.
    \]
    }
    
    To conclude, we distinguish two cases, depending on whether $h=0$ or not. 
    
    {\color{black}
    On one hand, if $h>0$, the reaction term 
    $\left( 1-\hat{v} \right)\left( F\left( \hat{v} \right)-C\varepsilon\left( t_0 \right) \right)$
    has in $\left[z_0,1\right]$ a monostable structure: its only zeros are $z_0$ and $1$ with
    negative derivative at $1$ and positive derivative at $z_0$. 
    Again by standard comparison, this implies that the solution of
    \[
	\begin{cases}
	    \mathscr{P}z = \left( 1-z \right)\left( F\left( z \right)-C \varepsilon\left( t_0 \right) \right)\\
	    z(t_0,x)=\hat{v}\left( t_0,x \right)>z_0\text{ for all }x\in\mathbb{R}
	\end{cases}
    \]
    converges uniformly in space to $1$ as $t\to+\infty$. Finally, by the comparison principle,
    $\hat{v}$ converges uniformly to $1$, or in other words $v$ converges uniformly to $0$.
    
    On the other hand, if $h=0$, then $\left( 1-\hat{v} \right)\left( F\left( \hat{v} \right)-C\varepsilon\left( t_0 \right) \right)$
    has in $\left(z_0,1\right)$ one more zero, say $z_1$. The steady
    states $z_0$ and $1$ are unstable whereas $z_1$ is stable. }By arguments similar to the previous
    case, we obtain 
    \[
        \lim_{t\to+\infty}\inf_{x\in\mathbb{R}^N} \hat{v}(t,x)\geq z_1.
    \]
    But $z_1$ depends continuously on $t_0$ and converges to $1$ as $t_0\to+\infty$. Passing to the limit,
    we deduce
    \[
        \lim_{t\to+\infty}\inf_{x\in\mathbb{R}^N} \hat{v}(t,x)\geq 1,
    \]
    whence $v$ converges uniformly to $0$ indeed.
\end{proof}

\section{Discussion}
\textcolor{black}{In this last section, we discuss the results and suggest new directions of research. More precisely,
we discuss different parameter ranges (\subsecref{discussion_fitness}, \subsecref{discussion_efficiency}),
a different PDE model (\subsecref{discussion_Nagylaki}) and biological implications (\subsecref{discussion_bio}) }

\subsection{On different choices of selective disadvantages\label{subsec:discussion_fitness}}
In what follows, we give some evidence arguing  in favor of coextinction when $a>\frac12$ (and $b\leq a$),
versus the existence of a joint front with complex spatio-temporal dynamics when $a<\frac12$, 
see also \figref{behaviours}.

\subsubsection{Monostable drive and brake with small selective disadvantage}
In this subsection we assume $a\leq\frac12$ so that the drive has a monostable dynamic and 
$b\leq\overline{b}_1$ so that we are in the predator--prey regime (see \propref{predator-prey}). 
We recall that, by \subsecref{linear_stability}, global convergence to $(0,0)$ for the diffusionless system
cannot occur if $a<\frac12$, as it is a saddle. As showed previously by \figref{phase_portaits_small_a}, 
the system is oscillating with either damped oscillations eventually converging to a coexistence state
or sustained oscillations approaching a periodic limit cycle or the heteroclinic cycle on the boundary. 
What about the reaction--diffusion system? In particular, is the monostable drive fast enough to persist 
ahead of the oscillating front?

First, since $v\mapsto f_2(1,v)$ is
\[
v\mapsto \frac{2(1-b)-b(1-h)v}{1-a+(2h-1)bv^2+2(a-b)v}-1
\]
and is therefore decreasing with respect to $v$, the nonlinearity $v f_2(1,v)$ has a KPP structure.

Hence (\ref{sys:general}) is a particular case of the more general framework investigated by Ducrot--Giletti--Matano
\cite{Ducrot_Giletti_Matano_2}, provided that $a\leq\frac{1}{4}$ (the prey follows KPP-like dynamics), and the drift term due to a heterogeneous population size is neglected, \textit{i.e.} 
$\mathscr{P}=\partial_t -\Delta$. However, 
regarding the part of their analysis we are interested in here, we believe the KPP condition and the absence
of drift could be relaxed. In any case, the following discussion can be rigorously justified when $a\leq\frac{1}{4}$ and $n$ is spatially homogeneous following the arguments in \cite{Ducrot_Giletti_Matano_2}. 

This boils down to the comparison between the spreading speed $c$ of $u$ invading the wild-type type $0$ alone (cf. \subsecref{without_brake}) and the spreading speed of $v$ invading the gene drive $u\sim 1$  separately, namely $2\sqrt{f_2(1,0)}=2\sqrt{\frac{1+a-2b}{1-a}}$.

In the case $a\leq \frac{1}{4}$, the sole gene drive verifies a KPP condition ensuring that the propagation occurs at the explicit speed $2\sqrt{f_1(0,0)}=2\sqrt{1-2a}$. It is immediate to see that $2\sqrt{1-2a} < 2\sqrt{\frac{1+a-2b}{1-a}}$, so that $u$ is unable to evade, and will eventually be caught up by $v$.

In the case $a\in\left( \frac{1}{4},\frac12\right)$, the spreading speed $c$ can be estimated by comparison with a well-chosen KPP equation, namely with a KPP
equation whose reaction term is larger than or equal to $u\mapsto uf_1(u,0)$ in $\left[0,1\right]$.
It can be verified easily that the minimal KPP reaction term satisfying this condition is exactly defined in
$\left[0,1\right]$ as
\[
u\mapsto u \max_{z\in\left[u,1\right]}f_1(z,0).
\]
Denoting 
\[
\alpha=\max_{u\in\left[0,1\right]}f_1(u,0),
\]
the KPP speed associated with this reaction term is exactly $2\sqrt{\alpha}$, so that the spreading speed $c$ 
of $u$ invading $0$ satisfies $c\leq 2\sqrt{\alpha}$. To estimate $\alpha$ itself, \textcolor{black}{we use the fact 
that, since $u\mapsto f_1(u,0)$ is not decreasing, $\alpha$ is exactly such that the 
second-order polynomial equation in the variable $u$
\[
\alpha\left( 1-a+a\left( 1-u \right)^2 \right)=a\left( 1-u \right)\left( u-\theta \right)
\]
admits a positive double root.}
After some algebra, we find that the corresponding discriminant vanishes if and only if
$\alpha = \frac{1-\sqrt{a}}{2\sqrt{a}}$, whence the estimate $c\leq 2\sqrt{\alpha}$ finally reads
\[
    c\leq\sqrt{2\frac{1-\sqrt{a}}{\sqrt{a}}}.
\]

Subsequently, we consider the quantity
\[
2\sqrt{\frac{1+a-2b}{1-a}}-\sqrt{2\frac{1-\sqrt{a}}{\sqrt{a}}}.
\]
It turns out that there exists $a_{1,b}\in\left(0,\frac12\right)$, increasing with respect to $b\in[0,a]$
and satisfying $a_{1,a}=\frac{1}{9}$, such that this 
quantity is positive if and only if $a> a_{1,b}$. Consequently, if $\frac{1}{2}\geq a\geq \frac{1}{4}>a_{1,b}$, 
we expect that $u$ is never able to evade $v$ and a joint invasion front is observed. 

Due to the diffusion and possible instabilities, we 
cannot really expect simple sustained or damped oscillations in the wake of the joint invasion front.
Numerically, we observe very complicated spatio-temporal patterns (see \figref{Cauchy_small_a}). 

\begin{figure}
    \centering
        \begin{subfigure}{0.45\linewidth}
    \includegraphics[width=\linewidth]{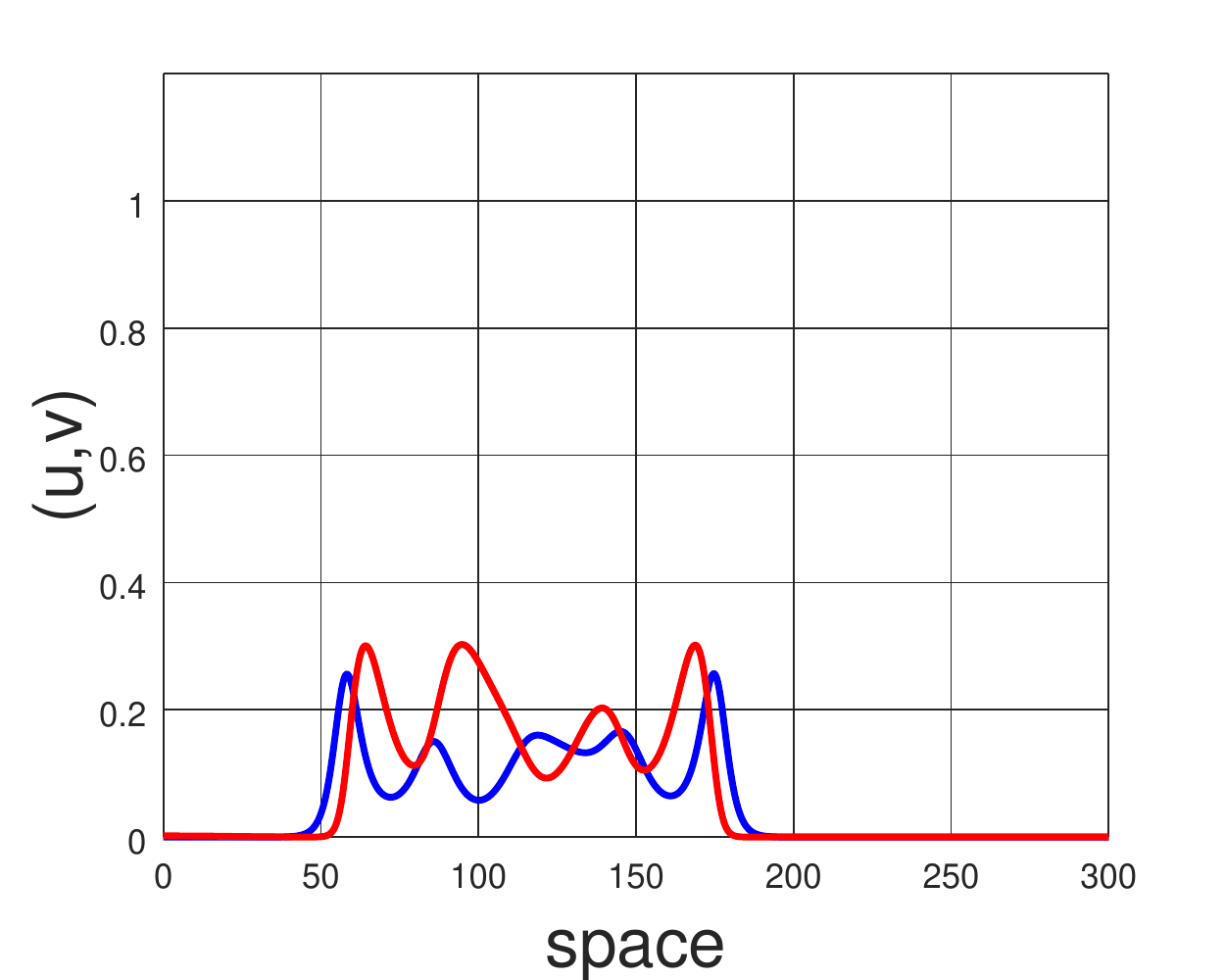}
    \caption{$t = 240$}
    \end{subfigure}
    \begin{subfigure}{0.45\linewidth}
    \includegraphics[width=\linewidth]{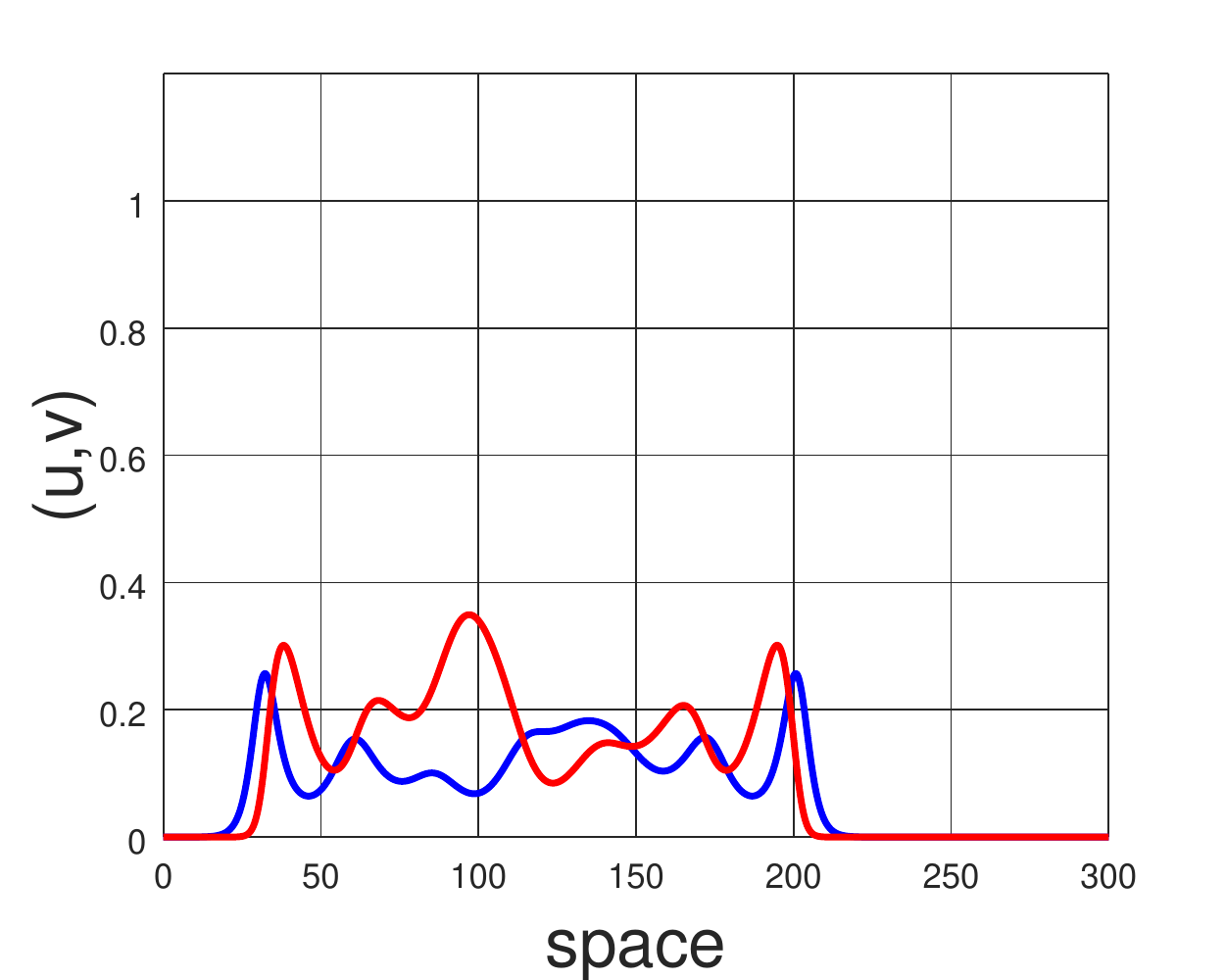}
    \caption{$t = 280$}
    \end{subfigure}
    \begin{subfigure}{0.45\linewidth}
    \includegraphics[width=\linewidth]{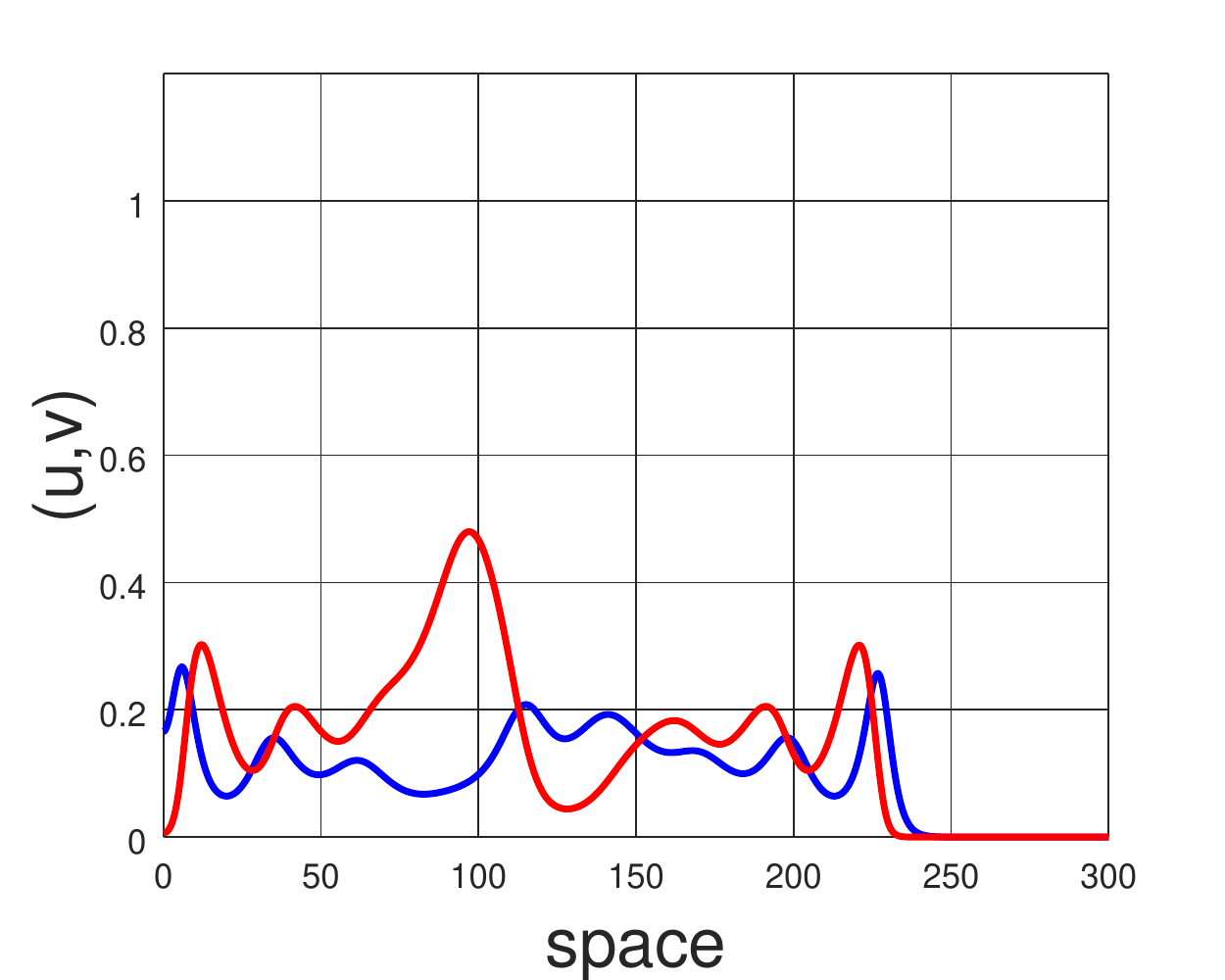}
    \caption{$t = 320$}
    \end{subfigure}
    \begin{subfigure}{0.45\linewidth}
    \includegraphics[width=\linewidth]{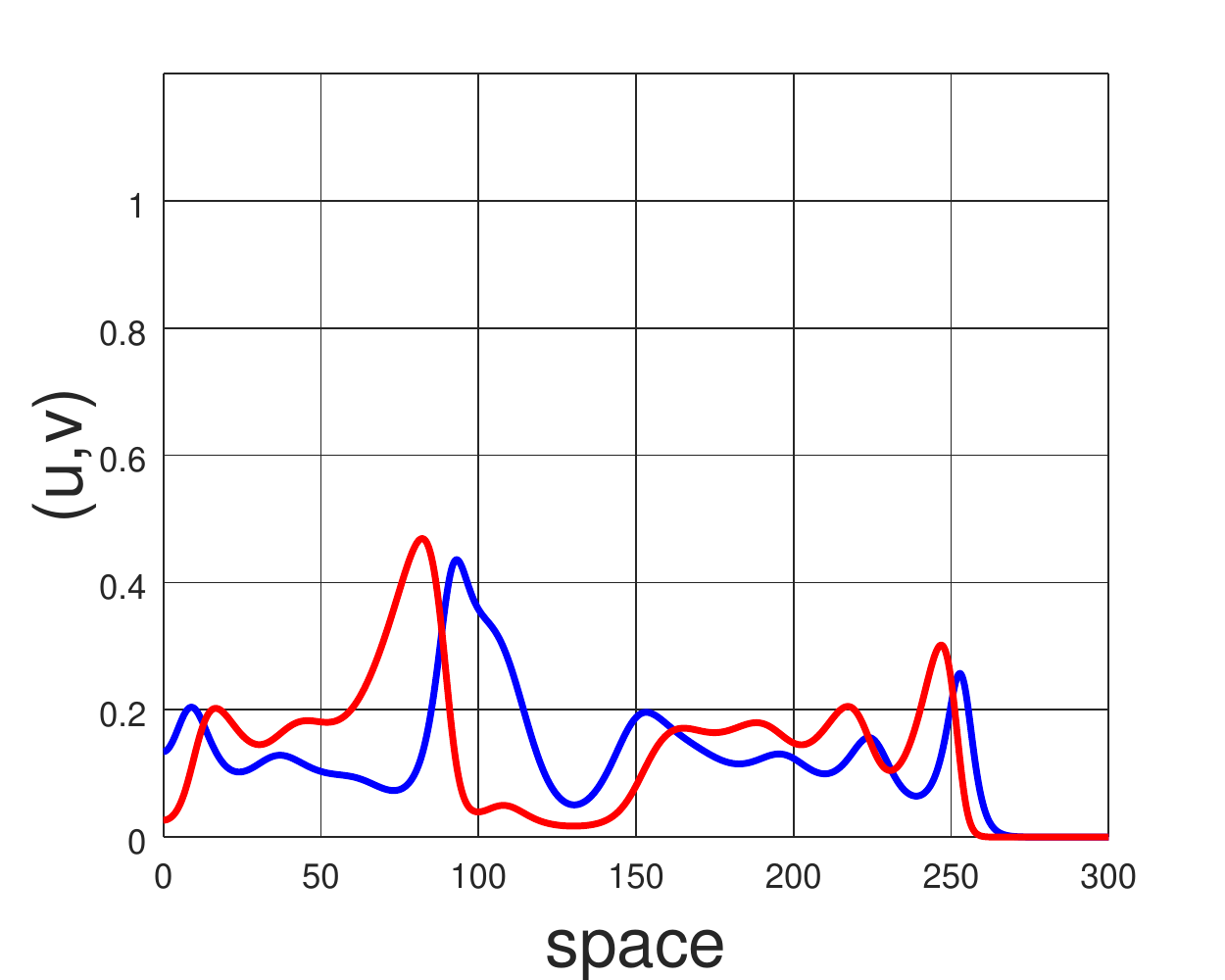}
    \caption{$t = 360$}
    \end{subfigure}
    \caption{Further numerical snapshots of the solution of (\ref{sys:general}) in the case $a= 0.45$, and $b = 0.35$, $h = 0.5$, and $n$ is spatially homogeneous, see \figref{behavioursA}.}
    \label{fig:Cauchy_small_a}
\end{figure}

%

\subsubsection{Brake and drive with the same large selective disadvantage}
When $b$ is close to $a>\frac12$, the dynamics are more complicated. 

\begin{figure}
    \begin{subfigure}{.9\linewidth}
        \resizebox{\linewidth}{!}{\input{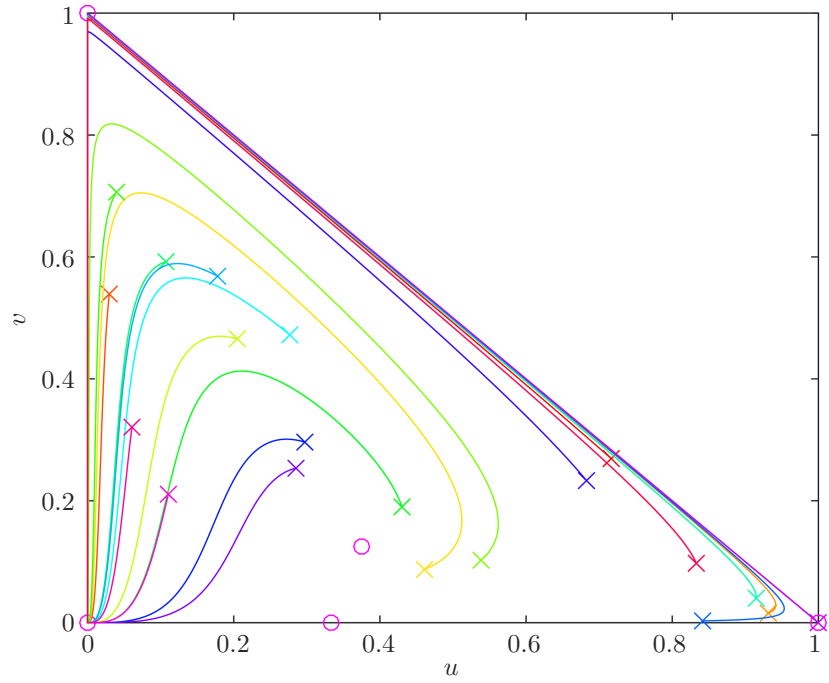}}
        \caption{$a=0.6$, $b=0.6$, $h=1$.}
    \end{subfigure}
    \\
    \begin{subfigure}{.9\linewidth}
        \resizebox{\linewidth}{!}{\input{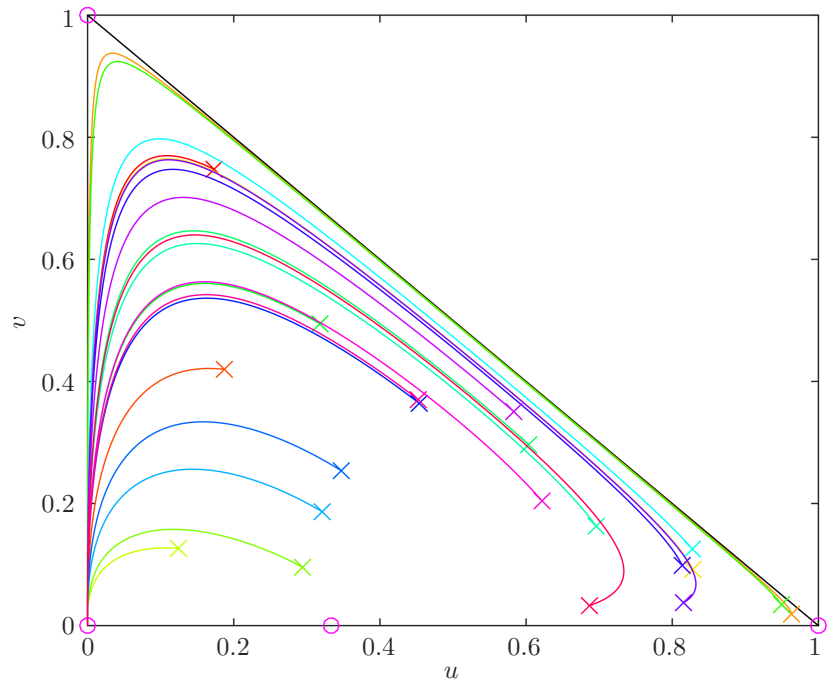}}
        \caption{$a=0.6$, $b=0.6$, $h=0.1$.}
    \end{subfigure}
    \caption{Trajectories of the diffusionless system with twenty random initial conditions and $b=a>\frac12$.
    When $h=1$, there is a coexistence state that is an unstable spiral. In both cases, $(0,0)$ seems to be
    globally attractive.}
    \label{fig:phase_portaits_large_b}
\end{figure}

In the limiting case $a=b$ with $h=1$, the following
properties can be established (quite directly). Of course they persist in a neighborhood of $a=b$, $h=1$.
\begin{enumerate}
    \item The dynamics in $\mathsf{T}$ are no longer of predator--prey type. The system remains 
    predative outside of the neighborhood of $(0,0)$ delimited by the graph of
    \[
    u\mapsto\frac12\left(2-u-\sqrt{\left(2-u\right)^2+4\left(u-\theta\right)}\right),
    \]
    but inside this neighborhood, the system is now cooperative. Even though $(0,0)$ might be globally 
    attractive inside the cooperative region (and this should actually be easy to establish), general solutions of
    the reaction--diffusion system might never enter uniformly this region. \textcolor{black}{In order to prove such a property},
    we need a Lyapunov function or a family of contractant sets, as in the proof of \thmref{convergence_to_0_if_b_is_small}. We did not manage 
    to perform such a construction.
    \item If \textcolor{black}{in addition} $a<\frac{2}{3}$, then there is a (unique) spatially uniform interior stationary state at
    \[
        \left(u^\star,v^\star\right)=\left(\frac{a}{4(1-a)},\frac{2-3a}{4(1-a)}\right).
    \]
    In view of \figref{phase_portaits_large_b}, this steady state is, regarding the diffusionless system, an 
    unstable spiral. This is one of the main obstacles encountered when trying to construct a Lyapunov
    function or a family a contractant sets.
\end{enumerate}

Because of these obstacles, we did not manage to find an analytical proof of the coextinction. Nevertheless,
in view of numerical experiments, it remains conjectured to hold true.

\subsection{On imperfect conversion efficiencies\label{subsec:discussion_efficiency}}
When the conversion efficiencies of the drive and the brake are no longer assumed to be perfect, the system
reads instead
\begin{equation*}
    \begin{cases}
\displaystyle        \mathscr{P}u = u\left( \frac{\left( 1-a \right)u+\left(1-c_B\right)\left(1-h_{BD}b\right)v+\left(2c_D \left( 1-a \right)+\left(1-c_D\right)\left(1-h_{DO}a\right)\right)\left( 1-u-v \right)}{w_{c_D,c_B}\left(u,v\right)}-1 \right),\medskip\\
\displaystyle        \mathscr{P}v = v\left( \frac{\left( 1-b \right)v+\left(2c_B \left( 1-b \right)+\left(1-c_B\right)\left(1-h_{BD}b\right)\right)u+\left( 1-hb \right)\left( 1-u-v \right)}{w_{c_D,c_B}\left(u,v\right)}-1 \right).
    \end{cases}
\end{equation*}
Here $c_D$, $c_B$, $h_{DO}$ and $h_{BD}$ are all in $\left[0,1\right]$ and are respectively the conversion
efficiency of the drive, the conversion efficiency of the brake, the dominance of $D$ on $O$ and the dominance
of $B$ on $D$. The normalizing mean fitness $w_{c_D,c_B}(u,v)$ is, similarly to the numerator, a mere
algebraic modification of the perfect case. 

It is therefore quite clear that the various uniform estimates on the reaction term we derived near 
the limit $b\sim 0$ should remain true for values of $(c_D,c_B)$ close enough to $(1,1)$ and
up to a slight modification of the threshold $a=\frac12$. 
Hence both \thmref{persistence_if_a_is_small} and \thmref{convergence_to_0_if_b_is_small}
are expected to remain true in this framework. 
We leave this extension for future work.

\subsection{On a slightly different model\label{subsec:discussion_Nagylaki}}
{\color{black}
When writing down the model, if we follow Nagylaki \cite{Nagylaki_1975} instead of following
Tanaka \textit{et al.} \cite{Tanaka_Stone_N}, we obtain a different system:
\begin{equation*}
    \begin{cases}
\displaystyle        \frac{\partial u}{\partial t} - \Delta u -2\nabla(\log  n)\cdot\nabla u = w_{OO}u\left( g_1\left( u,v \right)-w\left( u,v \right) \right),\medskip\\
\displaystyle        \frac{\partial v}{\partial t} - \Delta v -2\nabla(\log  n)\cdot\nabla v = w_{OO}v\left( g_2\left( u,v \right)-w\left( u,v \right) \right),\medskip\\
\displaystyle        \frac{\partial n}{\partial t} - \Delta n = \left( w_{OO}w(u,v)-1 \right)n,
    \end{cases}
\end{equation*}
where $w_{OO}$ is the fitness of wild-type homozygous individuals (it
can no longer be assumed to be unitary without loss of generality).
In short, the reaction term for $u$ and $v$ is 
multiplied by $w_{OO}w\left( u,v \right)$ and an explicit equation on $n$ appears.

It turns out that we can study the subsystem satisfied by 
$\left( u,v \right)$ exactly as before (handling $n$ as data) and obtain 
exactly the same results provided $w_{OO}$ is a positive constant.
The only (very small) differences appear in the algebraic expression of three thresholds, namely:
\begin{enumerate}
    \item the KPP threshold for the sole drive becomes $a=\frac{1}{3}$;
    \item the positivity threshold for the bistable speed of the sole drive becomes  
	$a=\frac{2}{3}$ (exactly);
    \item the cooperativity neighborhood when $a=b$, $h=1$ is now delimited by the straight line
	$u+v=\theta$.
\end{enumerate}
Note that, in this model, our coextinction result implies that, asymptotically in time,
the population density $n$ has an exponential growth with rate $w_{OO} -1$. 

The more general case where the wild-type fitness $w_{OO}\geq 0$ is non-constant (for instance, 
when it is a nonnegative function 
of $(u,v,n)$ that accounts for an Allee effect or a saturation effect) is left as an open problem. 
Nevertheless, let us point out that if the predator--prey
structure at $b=0$ is preserved (\textit{e.g.}, $w_{OO}$ depends only on
$n$), then the coextinction result still holds true (namely, there are only wild-type individuals 
in the long run) and this might simplify a lot the asymptotic population dynamics.

\subsection{On biological implications\label{subsec:discussion_bio}}
Whether the fitness cost associated to the drive allele, $a$, is greater or lower than $\frac{1}{2}$, is critical for the outcome of the model. The $a=\frac{1}{2}$ value is also critical in a well-mixed population in the absence of brake: when $a<\frac{1}{2}$, only the drive-only equilibrium ($u=1$) is locally stable, but when $a>\frac{1}{2}$, the wild-type-only equilibrium ($u=0$) becomes locally stable too: which equilibrium is eventually reached depends on the initial frequency of drive in the population (bistability). This frequency has to be higher than a threshold for the drive to fix. This bistability regime corresponds to what Tanaka \textit{et al.} \cite{Tanaka_Stone_N} call ``socially responsible drives''.  \cite{Tanaka_Stone_N} indeed showed that such drives could be stopped by barriers of finite width. Our results confirm that drives in the bistability parameter range (in our case, with $a>\frac{1}{2}$) are more ``socially responsible'' than drives without threshold ($a<\frac{1}{2}$): in the former case, a brake can stop the spread of a drive, while in the latter case, an indefinite co-invasion takes place, according to our model. 

Should artificial gene drives be implemented in nature for the control of wild populations, these results indicate that it would be preferable to design them so that they fall in the bistability parameter range. Such drives would be more controllable than drives without introduction thresholds. Having such control on parameters will obviously be much more complicated empirically, than it is theoretically. In addition, in the bistability parameter range, the threshold for successful drive introduction can be very high, meaning that very high numbers of drive-carrying individuals would need to be reared. 

Our macroscopic model ignores the effect of stochasticity by averaging all quantities. 
The spatial spread of the drive and the brake in individual-based models modeling smaller population sizes and possibly accounting for stochasticity remains to be investigated.

As our model uses a diffusion approximation, the location of the point of introduction of the brake does not affect our results. With non-diffusive models, for instance individual-based models, but also and more importantly in reality, this factor will probably affect the efficiency of the control of a gene drive by a brake.

\section*{Acknowledgments}
The authors thank three anonymous referees for valuable comments which lead to an improvement of the manuscript. 

This project has received funding from the European Research Council (ERC) under the European Union’s Horizon 2020 research and innovation program (grant agreement No 639638). 
This work was supported by a public grant as part of the Investissement d'avenir project, reference ANR-11-LABX-0056-LMH, LabEx LMH, and
ANR-14-ACHN-0003-01.

\appendix
\section{Weinberger's maximum principle \label{app:WMP}}
Below is recalled the main tool of the proof of \thmref{convergence_to_0_if_b_is_small}. For clarity, we temporarily get
rid of all our notations and adopt the original ones from \cite{Weinberger_1975}.

\begin{thm}[Weak maximum principle]
Let $D$ be a $C^{1,\nu}$ domain in $\mathbb{R}^n$ with $\nu\in\left(0,1\right)$, $S$ be a closed convex subset of $\mathbb{R}^m$,  $\mathbf{f}\left(\mathbf{u},x,t\right)$ be Lipschitz-continuous in $\mathbf{u}\in S$ and uniformly H\"{o}lder-continuous in 
$x\in D$ and $t\in\left[0,T\right]$, 
with the property that for any outward normal $\mathbf{p}$ at any boundary point $\mathbf{u}^\star$ of $S$, 
\[
\mathbf{p}\cdot\mathbf{f}\left(\mathbf{u}^\star,x,t\right)\leq 0\text{ for all }\left(x,t\right)\in D\times\left(0,T\right].
\]

Let 
\[
L=\frac{\partial}{\partial t}-\sum_{i,j=1}^n a^{ij}\left(x,t\right)\frac{\partial^2}{\partial x_i \partial x_j}
-\sum_{i=1}^n b_i\left(x,t\right)\frac{\partial}{\partial x_i}
\]
be uniformly parabolic with coefficients uniformly H\"{o}lder-continuous with H\"{o}lder exponent greater than $\frac12$.

If $\mathbf{u}$ is any solution in $D\times\left(0,T\right]$ of the system 
\[
Lu_{\alpha}=f_{\alpha}\left(\mathbf{u},x,t\right)\text{ for }\alpha=1,2,\dots,m
\]
which is continuous in $\overline{D}\times\left[0,T\right]$, and if the values of $\mathbf{u}$ 
on $\overline{D}\times\left\{0\right\}\cup \partial D\times\left[0,T\right]$ are bounded and H\"{o}lder-continuous and lie
in $S$, then $\mathbf{u}\left(x,t\right)\in S$ in $D\times\left(0,T\right]$.
\end{thm}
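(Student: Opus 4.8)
The plan is to reduce this vector statement to a scalar maximum principle for the distance-to-$S$ function, exploiting the two structural hypotheses that make everything work: that $S$ is convex, and that the \emph{same} scalar operator $L$ (no coupling, identical coefficients) acts on every component $u_\alpha$. Write $\phi(\mathbf{u})=\operatorname{dist}(\mathbf{u},S)$ and set $P(x,t)=\phi(\mathbf{u}(x,t))$. Since $S$ is closed and convex, $\phi$ is a nonnegative convex $1$-Lipschitz function, of class $C^1$ on $\mathbb{R}^m\setminus S$, whose gradient at an exterior point $\mathbf{u}$ equals the unit outward normal $\mathbf{p}$ of $S$ at the nearest point $\mathbf{u}^\star=\pi_S(\mathbf{u})\in\partial S$. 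The Hölder hypotheses on $L$ and $\mathbf{f}$ guarantee that $\mathbf{u}$ is a classical ($C^{2,1}$) solution in the interior, so the pointwise chain-rule computations below are legitimate. The goal is to prove $P\equiv 0$.

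First I would compute $LP$ off $S$ by the chain rule. Because $L$ acts componentwise with common coefficients, the first-order and Laplacian-type terms reassemble into $\nabla\phi(\mathbf{u})\cdot L\mathbf{u}$, leaving a Hessian remainder:
\[
LP=\nabla\phi(\mathbf{u})\cdot\Bigl(\partial_t\mathbf{u}-\sum_{i,j}a^{ij}\partial_{ij}\mathbf{u}-\sum_i b_i\partial_i\mathbf{u}\Bigr)-\sum_{i,j}a^{ij}\,(\partial_i\mathbf{u})^{\!\top}D^2\phi(\mathbf{u})\,(\partial_j\mathbf{u}).
\]
The first bracket is exactly $\mathbf{f}(\mathbf{u},x,t)$ by the equation, while the quadratic remainder is nonnegative: convexity of $S$ forces $D^2\phi\succeq 0$, uniform parabolicity makes $(a^{ij})$ positive definite, and a spectral decomposition $D^2\phi=\sum_k\lambda_k w_k w_k^{\!\top}$ reduces the remainder to a nonnegative combination of terms $\sum_{i,j}a^{ij}s_i^{(k)}s_j^{(k)}\ge 0$ with $s_i^{(k)}=w_k\cdot\partial_i\mathbf{u}$. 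Hence $LP\le\nabla\phi(\mathbf{u})\cdot\mathbf{f}(\mathbf{u},x,t)$ wherever $P>0$.

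Next I would convert the inward-pointing hypothesis, which is stated at boundary points of $S$, into an estimate at the exterior point $\mathbf{u}$. Since $\nabla\phi(\mathbf{u})=\mathbf{p}$ is an outward normal of $S$ at $\mathbf{u}^\star$, the hypothesis gives $\mathbf{p}\cdot\mathbf{f}(\mathbf{u}^\star,x,t)\le 0$, and the Lipschitz continuity of $\mathbf{f}$ in $\mathbf{u}$ (constant $K$), together with $|\mathbf{p}|=1$ and $|\mathbf{u}-\mathbf{u}^\star|=P$, yields
\[
\nabla\phi(\mathbf{u})\cdot\mathbf{f}(\mathbf{u},x,t)=\mathbf{p}\cdot\mathbf{f}(\mathbf{u}^\star,x,t)+\mathbf{p}\cdot\bigl(\mathbf{f}(\mathbf{u},x,t)-\mathbf{f}(\mathbf{u}^\star,x,t)\bigr)\le KP.
\]
Thus $P$ satisfies $LP\le KP$ on $\{P>0\}$, with $P\ge 0$ everywhere and $P=0$ on the parabolic boundary because the data lie in $S$. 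Setting $Q=e^{-Kt}P$ turns this into $LQ\le 0$ on $\{Q>0\}$; the scalar weak maximum principle then forces $Q\le 0$, hence $P\equiv 0$ and $\mathbf{u}(x,t)\in S$.

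The hard part will be the regularity of $\phi$ near $\partial S$. While $\phi$ is $C^1$ off $S$, it need not be $C^2$ there — for a polyhedral $S$ such as the triangle $\mathsf{T}$, $D^2\phi$ is discontinuous across the normal cones of the lower-dimensional faces, so the displayed identity for $LP$ holds only almost everywhere. I would resolve this in one of two ways: either (i) first prove the statement for a smooth strictly convex outer approximation $S_\delta\supset S$ with $\partial S_\delta\in C^2$, where $\phi_\delta$ is genuinely $C^2$ outside $S_\delta$, and then let $S_\delta\downarrow S$; or (ii) run the argument in the viscosity sense, testing $Q$ against smooth functions at an interior positive maximum, where only the first-order data $\nabla\phi(\mathbf{u})=\mathbf{p}$ and the one-sided inequality $D^2\phi\succeq 0$ are needed. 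A secondary point arises when $D$ is unbounded (as in the application with $D=\mathbb{R}^N$): one must control $P$ at spatial infinity, and here the boundedness of $S$ encoded in Weinberger's slab condition \cite{Weinberger_1975} keeps $P$ bounded, so a Phragm\'{e}n--Lindel\"{o}f form of the scalar maximum principle closes the argument.
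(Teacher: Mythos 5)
A preliminary caveat: this statement is the weak maximum principle recalled in Appendix A, which the paper does not prove --- it is quoted from Weinberger's 1975 article and used as a black box, so there is no in-paper proof to compare against. Judged on its own terms, your argument is essentially sound, and it is close in spirit to Weinberger's original one: he works with the supporting half-spaces of $S$, i.e.\ represents $\operatorname{dist}(\cdot,S)$ as a supremum of the affine functions $\mathbf{u}\mapsto\mathbf{p}\cdot\left(\mathbf{u}-\mathbf{u}^\star\right)$, which is your fix \emph{(ii)} made systematic, rather than differentiating the distance function twice. You correctly isolate the two structural ingredients (convexity of $S$ and the identical scalar operator acting on every component), the sign of the Hessian remainder, and the Gronwall-type absorption of the Lipschitz error via $e^{-Kt}$.

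Three soft spots, none fatal. (a) $\mathbf{f}$ is only assumed Lipschitz in $\mathbf{u}\in S$, yet you evaluate it at $\mathbf{u}\notin S$ and invoke the Lipschitz bound there; you must first extend $\mathbf{f}$ to a neighborhood of $S$ with the same constant (McShane--Kirszbraun), which is routine but should be said. (b) Your fix \emph{(i)} is more delicate than you suggest: the $\delta$-neighborhood of $S$ has the right normals (those of $S$ at the projection point) but its exterior distance function is exactly $\operatorname{dist}(\cdot,S)-\delta$, hence no smoother than before; whereas a genuinely $C^2$ strictly convex outer approximation has a smooth exterior distance function but its outward normals need not be outward normals of $S$ at any boundary point, so the inward-pointing hypothesis does not transfer without an $O(\delta)$ error that must then be carried through the Gronwall step. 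Route \emph{(ii)} --- at an interior positive maximum of $e^{-Kt}\operatorname{dist}(\mathbf{u}(x,t),S)$, touch it from above by the smooth scalar function $e^{-Kt}\,\mathbf{p}\cdot\left(\mathbf{u}(x,t)-\mathbf{u}_0^\star\right)$, where $\mathbf{u}_0^\star$ is the projection of the maximizing value and $\mathbf{p}$ the unit normal there, and replace $K$ by $K+\varepsilon$ to obtain a strict sign contradiction with the first- and second-derivative tests --- is cleaner and avoids the issue entirely. (c) The slab condition belongs to the hypotheses of the \emph{strong} maximum principle, not this weak one, and it does not make $S$ bounded (a slab is unbounded); for unbounded $D$ what keeps $P$ bounded is the boundedness of $\mathbf{u}$ itself, which is the standing assumption under which the Phragm\'{e}n--Lindel\"{o}f argument runs.
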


\begin{thm}[Strong maximum principle]
Let $D$ be an arbitrary domain in $\mathbb{R}^n$ and $S$ be a closed convex subset of $\mathbb{R}^m$ such that every boundary point of $S$ satisfies a
slab condition. Let $\mathbf{f}\left(\mathbf{u},x,t\right)$ be Lipschitz-continuous in $\mathbf{u}$, and
suppose that if $\mathbf{p}$ is any outward normal at a boundary point $\mathbf{u}^\star$, then 
\[
\mathbf{p}\cdot\mathbf{f}\left(\mathbf{u}^\star,x,t\right)\leq 0\text{ for all }\left(x,t\right)\in D\times\left(0,T\right].
\]

Let 
\[
L=\frac{\partial}{\partial t}-\sum_{i,j=1}^n a^{ij}\left(x,t\right)\frac{\partial^2}{\partial x_i \partial x_j}
-\sum_{i=1}^n b_i\left(x,t\right)\frac{\partial}{\partial x_i}
\]
be locally uniformly parabolic, and let its coefficients be locally bounded. 

If $\mathbf{u}$ is any solution in $D\times\left(0,T\right]$ of the system 
\[
Lu_{\alpha}=f_{\alpha}\left(\mathbf{u},x,t\right)\text{ for }\alpha=1,2,\dots,m
\]
with $\mathbf{u}\left(x,t\right)\in S$
and if $\mathbf{u}\left(x^\star,t^\star\right)\in\partial S$ for some $\left(x^\star,t^\star\right)\in D\times\left(0,T\right]$,
then $\mathbf{u}\left(x,t\right)\in\partial S$ in $D\times\left(0,t^\star\right]$.
\end{thm}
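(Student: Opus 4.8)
The plan is to reduce this vectorial statement to the classical scalar parabolic strong maximum principle (Nirenberg) by testing the solution against a supporting hyperplane at the contact point. Here the hypothesis that $L$ acts as the \emph{same} scalar parabolic operator on every component $u_\alpha$ is essential: it is exactly what lets a constant covector commute through $L$. First I would fix the contact point $\left(x^\star,t^\star\right)$, set $\mathbf{u}^\star=\mathbf{u}\left(x^\star,t^\star\right)\in\partial S$, and invoke the slab condition to select a unit outward normal $\mathbf{p}$ at $\mathbf{u}^\star$ together with a width $d>0$ for which $S\subset\left\{\mathbf{u}\ :\ c-d\leq\mathbf{p}\cdot\mathbf{u}\leq c\right\}$, where $c=\mathbf{p}\cdot\mathbf{u}^\star$.

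Next I would introduce the scalar quantity $\phi=c-\mathbf{p}\cdot\mathbf{u}$. Since $\mathbf{u}$ is valued in $S$ by hypothesis, $\phi$ is nonnegative and bounded above by $d$, and it vanishes precisely at $\left(x^\star,t^\star\right)$, where it therefore attains an interior minimum. Because $\mathbf{p}$ is constant and $L$ acts diagonally with the same scalar symbol on each line, applying $L$ componentwise gives
\[
L\phi=-\mathbf{p}\cdot\mathbf{f}\left(\mathbf{u},x,t\right).
\]
The whole matter is thus reduced to producing a locally bounded coefficient $K$ with $L\phi+K\phi\geq 0$, that is $\mathbf{p}\cdot\mathbf{f}\left(\mathbf{u},x,t\right)\leq K\phi$, and then feeding $\phi$ into the scalar strong maximum principle.

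The hard part will be precisely this scalar differential inequality. The inward-pointing hypothesis only supplies $\mathbf{p}\cdot\mathbf{f}\leq 0$ \emph{on the contact face} $\left\{\mathbf{p}\cdot\mathbf{u}=c\right\}\cap S$, whereas $\mathbf{u}\left(x,t\right)$ generally sits in the interior of $S$. I would bridge this gap by comparing $\mathbf{f}\left(\mathbf{u}\right)$ with its value at the point of the contact face reached by sliding $\mathbf{u}$ back along $\mathbf{p}$, exploiting the Lipschitz continuity of $\mathbf{f}$ in $\mathbf{u}$: the displacement then has length $\phi$, giving a bound $\mathbf{p}\cdot\mathbf{f}\left(\mathbf{u}\right)\leq L_{\mathbf f}\,\phi$. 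This is the step where convexity of $S$ and the flatness encoded in the slab condition are genuinely needed, since the slab condition is exactly what guarantees that the slid point is again a boundary point admitting $\mathbf{p}$ as an outward normal; without it the relevant boundary normal could rotate away from $\mathbf{p}$ and the estimate would degrade below a linear multiple of $\phi$ (as already happens for a round convex body).

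Finally, with $\phi\geq 0$, $L\phi+K\phi\geq 0$ and $\phi\left(x^\star,t^\star\right)=0$ in hand, I would apply the scalar parabolic strong maximum principle. As that statement is local, the mere local boundedness of the coefficients of $L$ and its local uniform parabolicity suffice. It forces $\phi$ to vanish at every point of $D\times\left(0,t^\star\right]$ that can be joined to $\left(x^\star,t^\star\right)$ by a path along which time does not increase; since $D$ is a domain, this is all of $D\times\left(0,t^\star\right]$. Hence $\mathbf{p}\cdot\mathbf{u}\left(x,t\right)=c$ there, so $\mathbf{u}\left(x,t\right)$ lies on the supporting hyperplane and a fortiori in $\partial S$ throughout $D\times\left(0,t^\star\right]$, which is the desired conclusion (in fact slightly sharper, since the solution is pinned to the \emph{same} supporting hyperplane).
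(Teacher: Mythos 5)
You should know at the outset that the paper contains no proof of this statement: Appendix~A merely transcribes it from \cite{Weinberger_1975} as an imported black-box tool (it is the engine behind the proof of Theorem~1.2). So your attempt can only be judged on its own internal logic, and there it has a genuine gap. The skeleton is indeed the natural one: with $\phi=c-\mathbf{p}\cdot\mathbf{u}$ the diagonal structure of $L$ gives $L\phi=-\mathbf{p}\cdot\mathbf{f}\left(\mathbf{u},x,t\right)$, and if one could establish $\mathbf{p}\cdot\mathbf{f}\left(\mathbf{u}(x,t),x,t\right)\leq K\phi(x,t)$ with $K$ locally bounded, the scalar strong maximum principle would finish the proof exactly as you say. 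The flaw is the sliding step that is supposed to produce this inequality. The slab condition at $\mathbf{u}^\star$ asserts only that $S$ lies in a slab of finite width, $S\subset\left\{c-d\leq\mathbf{p}\cdot\mathbf{w}\leq c\right\}$, with $\mathbf{p}\cdot\mathbf{u}^\star=c$. Every \emph{bounded} convex set satisfies this at every boundary point -- this is precisely why the paper may apply the theorem to the triangle $\mathsf{T}$ and to the polygons $\mathsf{C}_\mu$, corners included. In particular the closed unit disk satisfies it, and there your claim fails: sliding $\mathbf{u}=\left(\sin\theta,\cos\theta\right)$ along $\mathbf{p}=\left(0,1\right)$ lands at $\left(\sin\theta,1\right)\notin S$ for $\theta\neq0$, so the slid point is not a boundary point of $S$ and the inward-pointing hypothesis tells you nothing about $\mathbf{f}$ there. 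The property you actually invoke -- that $S$ maps into its contact face under sliding along $\mathbf{p}$ -- is a far more restrictive structural condition than the slab condition, and it is not implied by the hypotheses (it fails even for a regular pentagon with a horizontal top edge, all of whose faces are flat).

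Worse, this is not a repairable presentation issue, because the inequality you are aiming for is simply false under the stated hypotheses. Take $S$ the closed unit disk in $\mathbb{R}^2$ and $\mathbf{f}\left(\mathbf{u}\right)=J\mathbf{u}$ with $J$ the rotation by $\frac{\pi}{2}$: this $\mathbf{f}$ is Lipschitz and tangential, so $\mathbf{p}\cdot\mathbf{f}\left(\mathbf{u}^\star\right)=0$ at every boundary point and all hypotheses of the theorem hold. With $\mathbf{p}=\left(0,1\right)$ and $c=1$, the boundary points $\mathbf{u}_\varepsilon=\left(\sin\varepsilon,\cos\varepsilon\right)$ give $\mathbf{p}\cdot\mathbf{f}\left(\mathbf{u}_\varepsilon\right)=\sin\varepsilon\sim\varepsilon$ while $\phi\left(\mathbf{u}_\varepsilon\right)=1-\cos\varepsilon\sim\frac{\varepsilon^2}{2}$, so no bound $\mathbf{p}\cdot\mathbf{f}\leq K\phi$ can hold near the contact point. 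Concretely, the spatially homogeneous function $\mathbf{u}\left(t\right)=e^{\left(t-t^\star\right)J}\mathbf{p}$ solves $L\mathbf{u}=\mathbf{f}\left(\mathbf{u}\right)$ for $L=\partial_t-\Delta$, takes values in $S$, and touches $\mathbf{p}$ at time $t^\star$; along it, $\phi\left(t\right)=1-\cos\left(t-t^\star\right)$, so that for every constant $K$ and $t<t^\star$ close to $t^\star$,
\[
L\phi+K\phi=\sin\left(t-t^\star\right)+K\left(1-\cos\left(t-t^\star\right)\right)<0 .
\]
Thus the differential inequality on which your final step rests fails for a perfectly admissible solution (the conclusion of the theorem is of course still true for this solution, which lies entirely on the circle; what breaks is your route to it). This $\phi$-versus-$\sqrt{\phi}$ degeneracy is exactly the ``round body'' obstruction you flag, but contrary to what you assert, the slab condition does not exclude round bodies, and a correct proof -- Weinberger's -- has to control the tangential component of $\mathbf{f}$ by other means than a supporting-hyperplane functional combined with Lipschitz continuity.
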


\section{Numerical scheme \label{app:numeric}}
The various numerical simulations presented earlier are all produced by the following Octave code
or slight variants of it.

{\small
\begin{Verbatim}
clear

% Biological parameters
a = 0.6; % fitness cost of the gene drive, u
b = 0.1; % fitness cost of the brake, v
h = 0.5; % dominance rate of B over O in the heterozygous OB

% Parameters of the numerical scheme
T = 300; % final time
L = 1280; % length of the spatial domain
M = 160000; % number of time steps
N = 16000; % number of spatial steps
dt = T/M; % size of the time step
dx = L/N; % size of the spatial step
X = [0:N]*dx; % discretized spatial domain
A = spdiags([ones(N+1,1) -2*ones(N+1,1) ones(N+1,1)],[-1, 0, 1],N+1,N+1); % 1D discrete Laplacian
A(1,1) = A(1,1)+1; % Neumann boundary condition on the left
A(end,end) = A(end,end)+1; % Neumann boundary condition on the right
B = eye(NX+1)-(dt/dx^2)*A; % matrix for the semi-implicit scheme

% Initial conditions for the allelic frequencies
U = [zeros(8*N/20+1,1);0.99*ones(3*N/20,1);zeros(9*N/20,1)]; % u at t=0
V = [zeros(9*N/20+1,1);0.001*ones(N/20,1);zeros(10*N/20,1)]; % v at t=0

% Draw the initial conditions as functions of space
plot(X,U,'-b',X,V,'-r')
axis([0 N*dx 0 1.1])
xlabel('$x$','fontsize',10)
ylabel('$(u,v)$','fontsize',10)

% Loop
for i=[1:M]
    w = 1-(a*U.^2+b*V.^2+2*b*U.*V)-2*(1-U-V).*(a*U+h*b*V);
    u = B\(U + dt*U.*(((1-a)*U+2*(1-a)*(1-U-V))./w-1));
    v = B\(V + dt*V.*(((1-b)*V+2*(1-b)*U+(1-h*b)*(1-U-V))./w-1));
    U = u;
    V = v;
    if mod(i,M/100)==0 % At T/100, 2*T/100, etc., draw the allelic frequencies
        plot(X,U,'-b',X,V,'-r')
        axis([0 N*dx 0 1.1])
        xlabel('$x$','fontsize',10)
        ylabel('$(u,v)$','fontsize',10)
        drawnow;
    endif
end
\end{Verbatim}
}}

\bibliographystyle{plain}
\bibliography{ref}

\begin{thebibliography}{10}

\bibitem{Barton_Turelli}
Nick~H. Barton and Michael Turelli.
\newblock Spatial waves of advance with bistable dynamics: Cytoplasmic and
  genetic analogues of {A}llee effects.
\newblock {\em The American Naturalist}, 178(3):--48, 2011.
\newblock PMID: 21828986.

\bibitem{Beaghton_2016}
Andrea Beaghton, Pantelis~John Beaghton, and Austin Burt.
\newblock Gene drive through a landscape: {R}eaction{--}diffusion models of
  population suppression and elimination by a sex ratio distorter.
\newblock {\em Theoretical Population Biology}, 108:51--69, 2016.

\bibitem{Deredec_2008}
Anne Deredec, Austin Burt, and H.~C.~J. Godfray.
\newblock The population genetics of using homing endonuclease genes in vector
  and pest management.
\newblock {\em Genetics}, 179(4):2013--2026, 2008.

\bibitem{Ducrot_Giletti_Matano_2}
Arnaud Ducrot, Thomas Giletti, and Hiroshi Matano.
\newblock Spreading speeds for multidimensional reaction-diffusion systems of
  the prey-predator type.
\newblock {\em personal communication}.

\bibitem{Octave}
John~W. Eaton, David Bateman, S{\o}ren Hauberg, and Rik Wehbring.
\newblock {\em {GNU Octave} version 5.1.0 manual: a high-level interactive
  language for numerical computations}, 2019.

\bibitem{Esvelt_2017}
Kevin~M. Esvelt and Neil~J. Gemmell.
\newblock Conservation demands safe gene drive.
\newblock {\em PLOS Biology}, 15(11):1--8, 11 2017.

\bibitem{Esvelt_2014}
Kevin~M. Esvelt, Andrea~L. Smidler, Flaminia Catteruccia, and George~M. Church.
\newblock Emerging {T}echnology: {C}oncerning {RNA}-guided gene drives for the
  alteration of wild populations.
\newblock {\em eLife}, 3:e03401, jul 2014.

\bibitem{Fife_McLeod_19}
Paul~C. Fife and J.~B. McLeod.
\newblock The approach of solutions of nonlinear diffusion equations to
  travelling front solutions.
\newblock {\em Archive for Rational Mechanics and Analysis}, 65(4):335{--}361,
  1977.

\bibitem{Hastings_2018}
Alan Hastings, Karen~C. Abbott, Kim Cuddington, Tessa Francis, Gabriel Gellner,
  Ying-Cheng Lai, Andrew Morozov, Sergei Petrovskii, Katherine Scranton, and
  Mary~Lou Zeeman.
\newblock Transient phenomena in ecology.
\newblock {\em Science}, 361(6406), 2018.

\bibitem{Morozov_Petrovskii_Li}
Andrew Morozov, Sergei Petrovskii, and Bai{--}Lian Li.
\newblock Bifurcations and chaos in a predator-prey system with the allee
  effect.
\newblock {\em Proceedings of the Royal Society of London B: Biological
  Sciences}, 271(1546):1407--1414, 2004.

\bibitem{Nagylaki_1975}
Thomas Nagylaki.
\newblock Conditions for the existence of clines.
\newblock {\em Genetics}, 80(3):595--615, 1975.

\bibitem{NASEM_2016}
Engineering National Academies~of Sciences and Medicine.
\newblock {\em Gene Drives on the Horizon: Advancing Science, Navigating
  Uncertainty, and Aligning Research with Public Values}.
\newblock The National Academies Press, Washington, DC, 2016.

\bibitem{Noble_2018}
Charleston Noble, Ben Adlam, George~M. Church, Kevin~M. Esvelt, and Martin~A.
  Nowak.
\newblock Current {CRISPR} gene drive systems are likely to be highly invasive
  in wild populations.
\newblock {\em eLife}, 7:--33423, 2018.

\bibitem{Petrovskii_Mor}
Sergei~V. Petrovskii, Andrew~Y. Morozov, and Ezio Venturino.
\newblock Allee effect makes possible patchy invasion in a predator{--}prey
  system.
\newblock {\em Ecology Letters}, 5(3):345--352, 2002.

\bibitem{Rode_2019}
Nicolas~O. Rode, Arnaud Estoup, Denis Bourguet, Virginie Courtier-Orgogozo, and
  Florence D{\'e}barre.
\newblock Population management using gene drive: molecular design, models
  of spread dynamics and assessment of ecological risks.
\newblock {\em Conservation Genetics}, Apr 2019.

\bibitem{Tanaka_Stone_N}
Hidenori Tanaka, Howard~A. Stone, and David~R. Nelson.
\newblock Spatial gene drives and pushed genetic waves.
\newblock {\em Proceedings of the National Academy of Sciences}, page
  201705868, 2017.

\bibitem{Unckless_2015}
Robert~L. Unckless, Philipp~W. Messer, Tim Connallon, and Andrew~G. Clark.
\newblock Modeling the manipulation of natural populations by the mutagenic
  chain reaction.
\newblock {\em Genetics}, 201(2):425--431, 2015.

\bibitem{Vella_2017}
Michael~R. Vella, Christian~E. Gunning, Alun~L. Lloyd, and Fred Gould.
\newblock Evaluating strategies for reversing {CRISPR-Cas9} gene drives.
\newblock {\em Scientific Reports}, 7(1):11038, 2017.

\bibitem{Wang_Shi_Wei}
Jinfeng Wang, Junping Shi, and Junjie Wei.
\newblock Dynamics and pattern formation in a diffusive predator-prey system
  with strong allee effect in prey.
\newblock {\em J. Differential Equations}, 251(4-5):1276--1304, 2011.

\bibitem{Weinberger_1975}
Hans~F. Weinberger.
\newblock Invariant sets for weakly coupled parabolic and elliptic systems.
\newblock {\em Rend. Mat. (6)}, 8:295--310, 1975.
\newblock Collection of articles dedicated to Mauro Picone on the occasion of
  his ninetieth birthday.

\bibitem{Wu_2016}
Bing Wu, Liqun Luo, and Xiaojing~J. Gao.
\newblock Cas9-triggered chain ablation of cas9 as a gene drive brake.
\newblock {\em Nature Biotechnology}, 34:137, Feb 2016.

\end{thebibliography}

\end{document}